\newcommand{\arccot}{\mathop{\rm arccot}\nolimits}
\renewenvironment{proof}[1][Proof]{\textbf{#1.} }{\ \rule{0.5em}{0.5em}}
\DeclareMathOperator{\diam}{diam}
\DeclareMathOperator{\bd}{bd}
\DeclareMathOperator{\co}{co}
\DeclareMathOperator{\intt}{int}
\renewenvironment{proof}[1][Proof]{\textbf{#1.} }
{\ \rule{0.5em}{0.5em}}
\newtheorem{theorem}{Theorem}
\newtheorem{prop}{Proposition}
\newtheorem{lemma}{Lemma}
\newtheorem{corollary}{Corollary}
\newtheorem{conjecture}{Conjecture}
\theoremstyle{definition}
\newtheorem{remark}{Remark}
\begin{document}

\title
[The extreme polygons for the self Chebyshev radius \dots]
{The extreme polygons for the self \\ Chebyshev radius of the boundary}
\author{E.V.~Nikitenko, Yu.G.~Nikonorov}

\address{Nikitenko Evgeni\u\i\  Vitalievich\newline
Rubtsovsk Industrial Institute of\newline
Altai State Technical University \newline
after I.I.~Polzunov \newline
Rubtsovsk, Traktornaya st., 2/6, \newline
658207, Russia}
\email{evnikit@mail.ru}

\address{Nikonorov Yuri\u{\i} Gennadievich \newline
Southern Mathematical Institute of \newline
the Vladikavkaz Scientific Center of \newline
the Russian Academy of Sciences, \newline
Vladikavkaz, Vatutina st., 53, \newline
362025, Russia}
\email{nikonorov2006@mail.ru}

\begin{abstract}

The paper is devoted to some extremal problems for convex polygons on the Euclidean plane, related to the concept of
self Chebyshev radius for the polygon boundary. We consider a general problem of minimization of the perimeter among all $n$-gons with a fixed
self Chebyshev radius of the boundary. The main result of the paper is the complete solution of the mentioned problem for $n=4$:
We proved that the quadrilateral of minimum perimeter is a so called magic kite, that verified the corresponding conjecture by Rolf~Walter.

\vspace{2mm}
\noindent
2020 Mathematical Subject Classification:
52A10, 52A40, 53A04.

\vspace{2mm} \noindent Key words and phrases: approximation by polytopes, convex curve, convex polygon, Chebyshev radius, minimum perimeter, self Chebyshev radius.
\end{abstract}

\maketitle

\section{Introduction and the main result}

For a given metric space $(X, d)$, we denote by
$B(x, r)$ the closed ball with
center $x$ and radius $r$.
If $M$ is a nonempty bounded subset of a metric space $(X, d)$, then by
$\diam(M) = \sup_{x,y\in M} d(x, y)$ we denote {\it the diameter of $M$}, and by
$$
r(M) := \inf
\{a \geq 0 \,|\, \exists \, x \in X,\, M\subset B(x, a)\}
= \inf_{x\in X} \sup_{y\in M} d(x,y)
$$
we denote {\it the Chebyshev radius} of $M$. A point $x_0 \in X$ for which $M\subset B(x_0, r(M))$ is called {\it a Chebyshev center} of $M$.
In general, a Chebyshev center of a set is not unique, and therefore we denote by $Z(M)$
{\it the set of all Chebyshev centers} of a bounded set $M$. The mapping $M \mapsto Z(M)$
is known as the Chebyshev-center map.

Given a nonempty bounded subset $M$ of $X$ and a nonempty set $Y \subset X$, {\it the relative Chebyshev radius} (of the set $M$ with respect to $Y$)
is defined by the formula
$$
r_Y(M) :=
\inf
\{a \geq 0 \,|\, \exists \, x \in Y,\, M\subset B(x, a)\}
= \inf_{x\in Y} \sup_{y\in M} d(x,y).
$$

In the case $Y=M$, we get the definition of {\it the relative Chebyshev radius of $M$ with respect to $M$ itself}
~i.e. {\it the self Chebyshev radius of the set $M$}:
\begin{equation}\label{chebir1}
\delta(M):=r_M(M)=\inf
\{a \geq 0 \,|\, \exists \, x \in M,\, M\subset B(x, a)\}=\inf\limits_{x\in M}\, \sup\limits_{y\in M} \,d(x,y),
\end{equation}
see, e.g., \cite{AlTsa2019}, \cite[p.~119]{Am1986}, and  \cite{AmZig1980}. \label{defchebrad}
\smallskip

It is clear that this value depends only on the set $M$ and the restriction of the metric $d$ to this set, hence it is an intrinsic characteristic
of the (bounded) metric space $(M, d|_{M\times M})$.
It has also the following obvious geometric meaning for compact $M$: $\delta(M)$ is the smallest radius of a ball having its center in $M$ and covering $M$.
\smallskip

The study of extremal problems for convex curves in the Euclidean plane with a given self Chebyshev radius started with paper
\cite{Walter2017} by Rolf~Walter.
In particular, he conjectured that $L(\Gamma)\geq \pi \cdot \delta(\Gamma)$ for any closed convex curve $\Gamma$ in the Euclidean plane,
where $L(\Gamma)$ is the length of $\Gamma$ and $d$ is the Euclidean metric.
In~\cite{Walter2017}, this conjecture is proved for the case when
$\Gamma$ is a convex curve of class $C^2$ and all curvature centers of $\Gamma$ lie in the interior of $\Gamma$.
It is also shown that the equality $L(\Gamma)=\pi \cdot \delta(\Gamma)$ in this case
holds if and only if $\gamma$ is of constant width; for sets of constant width the reader is
referred to the monograph \cite{MaMoOl}.

It is also proved in \cite{Walter2017} that all $C^2$-smooth convex curves have good approximations by polygonal chains
in terms of the self Chebyshev radius \eqref{chebir1}.
This observation leads to natural extremal problems for convex polygons.
In particular, the following result was also proved in \cite{Walter2017}.
For each triangle  $P$ in the Euclidean plane,  one has
$L(\Gamma)\geq 2\sqrt{3}\,\cdot \delta(\Gamma)$
with equality exactly for equilateral triangles, where $\Gamma$ is the boundary of~$P$.
The proof of this result was simplified by Balestro, Martini, Nikonorova, and one of the present authors in \cite{BMNN2021},
where the authors determined the self Chebyshev radius for the boundary of an arbitrary triangle.
Moreover, some related problems were considered in detail in \cite{BMNN2021}.
In particular, the maximal possible perimeter for  convex curves and boundaries of convex $n$-gons with a given self Chebyshev radius were found.
\smallskip

As the authors of \cite{BMNN2021} discovered somewhat later, the original problem from \cite{Walter2017},
which was to prove the inequality $L(\Gamma)\geq \pi \cdot \delta(\Gamma)$ for any closed convex curve $\Gamma$ in the Euclidean plane,
had already been solved many years ago in paper \cite{Fal1977} by  K.J.~Falconer
(the results of that paper were formulated in other terms, without using the self Chebyshev radius).
An exposition of the corresponding result can also be found in \cite[Theorem 4.3.2]{MaMoOl}.
It should be noted that the following stronger result is also proved in \cite{Fal1977}.
Instead of convexity, it requires the curve to be rectifiable
and instead of the plane it is in $\mathbb{R}^n$.
{\sl Let $\Gamma$ be a closed rectifiable curve in $\mathbb{R}^n$ {\rm(}with the Euclidean metric{\rm)} such that for every point $x$ on
$\Gamma$ there is a point of $\Gamma$ at distance at least $1$ from $x$. Then $\Gamma$ has length at least $\pi$, this
value being attained if and only if $\Gamma$ bounds a plane convex set of constant width~$1$.}
\medskip

\begin{figure}[t]
\center{\includegraphics[width=0.47\textwidth]{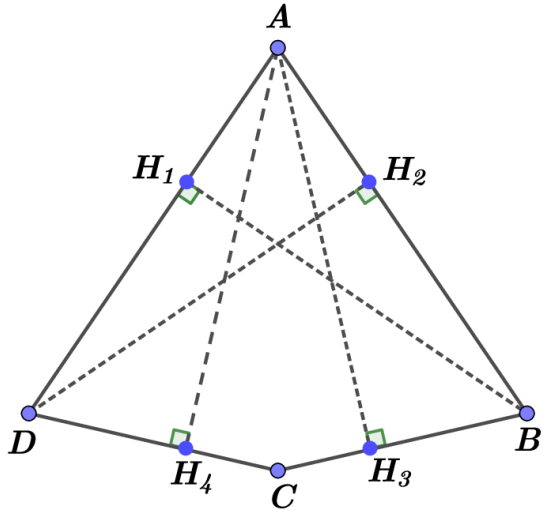}}\\
\caption{A magic kite.}
\label{Fig1}
\end{figure}

This paper is devoted to the proof of Rolf Walter's conjecture in \cite{Walter2017}
on quadrilaterals of minimum perimeter among all quadrilaterals with a given self Chebyshev radius of their boundaries
that is as follows:
$L(\Gamma)\geq \frac{4}{3}\sqrt{2\sqrt{3}+3}\,\cdot \delta(\Gamma)$
for any convex quadrilateral $P\subset \mathbb{R}^2$ with the boundary~$\Gamma$.

Note that this inequality becomes an equality for quadrilaterals $P$ called {\it magic kites} (squares are not extremal in this sense). \label{defmagkite}
This definition is
taken from \cite{Walter2017} and means
convex quadrilaterals which are hypothetically extreme with respect to
the self Chebyshev radius.
Up to similarity, such a quadrilateral could be represented by its vertices, that are as follows (see  Fig. \ref{Fig1}):
$$
(-1, 0), \quad (1, 0), \quad
\left(0, \frac{\sqrt{3}}{3}\sqrt{2\sqrt{3}+3}\right), \quad
\left(0, -\frac{1}{3}\sqrt{2\sqrt{3}-3}\,\right).
$$
The heights drawn from all the vertices to the sides farthest to them have the same length ($H_i$, $i=1,2,3,4$,
denotes the bases of the corresponding heights on the sides of the considered quadrilateral), which coincides with the self Chebyshev radius
of the boundary $\Gamma$ of this magic kite. Note that two such heights emanate from vertex $A$, one such height emanates from vertices $B$ and $D$,
while the heights from vertex $C$ are shorter than the proper Chebyshev radius of the boundary of this polygon.

Note also that  we have $L(\Gamma_\square)=\frac{8}{\sqrt{5}}\,\cdot \delta(\Gamma_\square)$, where $\Gamma_\square$ is a boundary of a square, and
$\frac{8}{\sqrt{5}}>\frac{4}{3}\sqrt{3+2\sqrt{3}}= 3.389946...$.

\medskip
Our main result is the following theorem.

\begin{theorem}\label{t.main}
For any quadrilateral $P$ with the boundary $\Gamma$ on Euclidean plane, we have the inequality
$L(\Gamma)\geq \frac{4}{3}\sqrt{3+2\sqrt{3}}\,\cdot \delta(\Gamma)$, with equality exactly for magic kites.
In other words, a magic kite has the minimal perimeter
among all quadrilaterals with a given self Chebyshev radius of their boundaries.
\end{theorem}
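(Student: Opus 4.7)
The plan is to rescale so that $\delta(\Gamma)=1$ and prove $L(\Gamma)\geq \frac{4}{3}\sqrt{3+2\sqrt{3}}$ with equality only for magic kites. Label the vertices $V_1,\ldots,V_4$ in cyclic order and the sides $e_1,\ldots,e_4$, where $e_k$ joins $V_k$ and $V_{k+1}$ (indices mod~$4$). Since $P$ is convex, for any $x\in\Gamma$ the farthest point of $\Gamma$ from $x$ is a vertex, and when $x$ lies in the relative interior of $e_k$ it must be one of the two opposite vertices $V_{k+2},V_{k-1}$. Hence
\[
\delta(\Gamma)=\min_{1\leq k\leq 4}\delta_k,\qquad \delta_k:=\min_{x\in e_k}\;\max_{V_j\notin e_k}d(x,V_j),
\]
and each $\delta_k$ is a piecewise-algebraic function of the vertex positions whose minimum along $e_k$ is attained in exactly one of three canonical configurations: (i)~at the foot of the perpendicular from a single opposite vertex (when this foot lies in the segment and the other opposite vertex is strictly closer); (ii)~at the intersection of $e_k$ with the perpendicular bisector of the two opposite vertices; or (iii)~at an endpoint of $e_k$.

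For existence of a minimizer $P^*$, the constraint $\delta(\Gamma)=1$ forces $\Gamma\subset B(x^*,1)$ for some $x^*\in\Gamma$ and hence $\diam(P)\leq 2$, so admissible quadrilaterals modulo Euclidean motions form a relatively compact set. Degenerations to a triangle are ruled out because the triangle bound $L\geq 2\sqrt{3}\,\delta$ from \cite{BMNN2021} gives $L\geq 2\sqrt{3}>\frac{4}{3}\sqrt{3+2\sqrt{3}}$, so the infimum is attained at a nondegenerate convex quadrilateral $P^*$. A simple one-sided perturbation shows every side of $P^*$ must be active, $\delta_k=1$ for every $k$: otherwise one could slide the endpoints of an inactive side slightly inward and strictly decrease $L$ while maintaining $\delta\geq 1$. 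On each side I would fix a local Chebyshev point $x^*_k\in e_k$ falling into one of cases (i)--(iii).

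The heart of the proof is a first-order analysis at $P^*$. The gradient of $L$ with respect to $V_i$ equals the sum of the two outward unit vectors along the sides meeting at $V_i$; the gradient of an active constraint $d(x^*_k,V_j)\geq 1$ with respect to $V_j$ is the unit vector $(V_j-x^*_k)/|V_j-x^*_k|$ when $V_j$ is the active opposite vertex on $e_k$ and zero otherwise (using the envelope theorem to hold $x^*_k$ fixed in cases (i)~and (ii)). Combining the resulting Lagrange/KKT conditions with a case analysis on the ``activation pattern''---how each of the four sides falls into case (i), (ii) or (iii), and which opposite vertex is active on it---one would eliminate every pattern except the one realised by the magic kite: case (i) on every side, with one vertex $A$ carrying two active constraints (the perpendicular feet from $A$ onto the two non-adjacent sides), the two adjacent vertices $B$ and $D$ carrying one each, and the vertex $C$ opposite to $A$ carrying none. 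This forces the reflection symmetry across the diagonal $AC$ described in the excerpt, and the resulting one-parameter family of stationary configurations can then be solved explicitly to reproduce the magic-kite coordinates; a direct perimeter computation yields $L(\Gamma)=\frac{4}{3}\sqrt{3+2\sqrt{3}}\cdot\delta(\Gamma)$.

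The main obstacle will be this combinatorial case analysis. Besides the magic-kite pattern, there are many a priori activation patterns---sides whose local minimum is of type (ii) or (iii), and various distributions of active constraints over the four vertices---and each must be ruled out by exhibiting either an explicit infinitesimal perturbation of the vertices that strictly decreases $L$ while preserving $\delta\geq 1$, or a direct contradiction from the KKT equations. The delicate envelope-theorem arguments needed in case (iii) and at transitions between cases, together with verifying that the surviving magic-kite configuration is a global rather than merely local minimum, form the technical heart of the proof.
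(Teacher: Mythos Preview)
Your plan has the right architecture---existence of a minimiser, first-order (KKT) analysis, combinatorial case split on activation patterns---and this is exactly the paper's strategy. But the key shortcut you propose, that ``every side of $P^*$ must be active, $\delta_k=1$ for every $k$,'' is not justified by the perturbation you describe. If side $e_k$ is inactive and you slide its endpoints $V_k,V_{k+1}$ inward, you shrink the quadrilateral, and by monotonicity of the self Chebyshev radius under inclusion (the paper's Proposition~\ref{prop_4}) you \emph{decrease} $\delta$, not preserve it. More concretely, even though $e_k$ carries no Chebyshev centre, the vertices $V_k$ and $V_{k+1}$ may well be the farthest vertices for Chebyshev centres sitting on \emph{other} sides, so moving them destroys those active constraints. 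The paper's substitute for your one-line argument is its Proposition~\ref{pr_extr1}: if $e_k\cap G_{\min}=\emptyset$ then both endpoints of $e_k$ must lie at distance exactly $\delta(\Gamma)$ from some centre elsewhere---and then three full sections (Cases~1--3, covering $|G_{\min}|\in\{1,2,3\}$) are needed to eliminate those configurations one by one, often by delicate trigonometric estimates and in two subcases by numerical optimisation. This is not a detail you can wave away; it is the bulk of the paper.

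Two smaller points. First, your formula $\delta(\Gamma)=\min_k\delta_k$ with $\delta_k=\min_{x\in e_k}\max_{V_j\notin e_k}d(x,V_j)$ presupposes that the farthest vertex from $x\in e_k$ is never an endpoint of $e_k$; this is false in general and requires the $(2+\sqrt{2})$-perimeter bound (the paper's Corollary~\ref{cor.simple1} and Proposition~\ref{pr_extr2}) to justify for an extremal $P^*$. Second, even once $|G_{\min}|=4$ is established, the paper still needs a nontrivial case split (its Cases~4.0--4.3) to rule out, e.g., two non-simple centres on opposite sides or the various mixed simple/non-simple patterns; your envelope-theorem/KKT framework may well organise this more cleanly than the paper's ad hoc trigonometry, but you should expect the elimination of the non-kite patterns to require real work, not just ``exhibiting an infinitesimal perturbation.''
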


\begin{figure}[t]
\center{\includegraphics[width=0.41\textwidth]{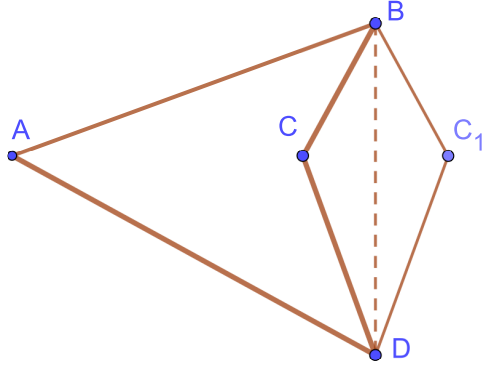}}\\
\caption{The transition from a non-convex quadrilateral to a convex one.}
\label{Fig1.5}
\end{figure}

If a quadrilateral $P$ is not convex, then one can easy to find a convex quadrilateral $P_1$ with the same perimeter and such that
the self Chebyshev radius of the boundary of $P_1$ is not less than the self Chebyshev radius of the boundary of $P$.
Indeed, if the convex hull of a quadrilateral $ABCD$ is the triangle $ABD$, we can consider the point $C_1$ that is symmetric to $C$ with respect the straight line $BD$, see
Fig.~\ref{Fig1.5}. The quadrilateral $ABC_1D$ is convex and has the same perimeter as $ABCD$ has.
For any point $M\in [B,C]\bigcup [C,D]$, denote by $M_1$ the point on $[B,C_1]\bigcup [C_1,D]$ that is symmetric to $M$ with respect the straight line $BD$.
If we take any point $L\in [A,B]\bigcup[A,D]$, then the distance from $L$ to $M_1$ is not less that the distance from   $L$ to $M$.
This observation implies that the self Chebyshev radius of the boundary of $ABCD$ is not greater than the self Chebyshev radius of the boundary of $ABC_1D$.
Therefore, the above theorem must be proved only for convex quadrilaterals.

We use mainly standard methods of Euclidean geometry and analysis to prove this theorem.
Sometimes we have to resort to the help of computer algebra systems.
In what follows we repeatedly use some well known properties of convex polygons.
As a source on the properties of polygons, one can advise, in particular, books \cite{Olymp, HaWa, Pra2006}.
There is also a special book on the properties of quadrilaterals: \cite{AlNel}.
\smallskip

The proof of the main theorem is technically quite difficult. We have to consider many special cases, and in each case we apply different methods.
Despite all our attempts to simplify the proofs, we have not succeeded in doing so.
We would like to believe that someone will find a shorter and more conceptual reasoning. But while this is not known, we decided to share our version of the proof.
\smallskip

For symbolic and numeric calculations, we used the well-established Maple, a general purpose computer algebra system (CAS).
Wherever possible, we have used symbolic calculations.
The Maple program provides a wide range of commands for working with symbolic expressions.
Numerical calculations were used only where it was not possible to use symbolic calculations effectively
and where there was no doubt about the correctness of the approximations
(because there is some margin of magnitude with which to compare the result).
The precision used was controlled by the Digits global variable, which has a default value of 10.
However, it can be set to almost any other value. For more detailed information, see e.~g. \cite{Landau}.
\smallskip

The paper is organized as follows. In Section \ref{sect.0} we consider some  important information on Chebyshev radii and centers for bounded convex sets
in an arbitrary Hilbert space. In Section \ref{sect.1} we prove some important results
on self Chebyshev radii and centers for boundaries of convex polygons on the Euclidean plane.
In Section \ref{sect.2} we describe a general plan of the study and obtain some auxiliary results.
Sections \ref{2.5}, \ref{sect.3}, \ref{sect.4}, and \ref{sect.5} are devoted to the study of important partial cases of the general problem.
We prove our main Theorem \ref{t.main} on the basis of the results obtained in these sections.
In the final section we consider some additional conjectures and outline the prospects for further research on the topic.
The list of some important notations and definitions is given at the end of the paper.

\smallskip

The authors are grateful to Endre Makai, Jr., Horst Martini, and Yusuke Sakane for useful discussions and valuable advices.
We are also grateful for the advices of both of the anonymous reviewers, whose comments and suggestions allowed us to improve the
presentation of this article.

\section{Some general results}\label{sect.0}

All propositions in this section can be found in various sources. We give them here in the form we need and provide brief proofs for the sake of completeness.

Let us consider any Hilbert space $H$ with the inner product $(\cdot, \cdot)$ and the corresponding norm $\|\cdot \|$
(in particular any $\mathbb{R}^n$ with some Euclidean norm).
For any set $M \subset H$, the symbols $\co(M)$, $\bd(M)$, and $\intt(M)$ denote respectively the convex hull, the boundary, and the interior of $M$.

For given $c\in H$ and $r \geq 0$, we consider $S(c,r)=\{x\in H\,|\, \|x-c\|=r\}$,  $U(c,r)=\{x\in H\,|\, \|x-c\|< r\}$,
and  $B(c,r)=\{x\in H\,|\, \|x-c\|\leq r\}$, respectively
the sphere, the open ball, and the closed ball with the center $c$ and radius $r$. For $x,y \in H$, the symbol $[x,y]$ means the closed interval
with the ends $x$ and $y$ on the line through $x$ and $y$.

\begin{prop}\label{prop_2}
Let $M$ be a non-empty  bounded set in $H$, and a function $t:H \rightarrow \mathbb{R}$ such that
$t(x)=\min \{r \,|\, M\subset B(x,r)\}$.
Then $t$ is a convex function and there is a real number $C >0$ such that
$\|x\|-C \leq t(x) \leq \|x\|+C$ for all $x\in H$.
As a corollary, the function $x\mapsto t(x)$ has a unique point of global minimum value in $H$.
\end{prop}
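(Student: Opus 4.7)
The plan is to first rewrite $t$ in a workable form and then attack convexity, the growth bounds, and the corollary separately. Observe that for fixed $x$, the condition $M \subset B(x,r)$ is equivalent to $\sup_{y \in M} \|x-y\| \le r$, and this supremum is finite because $M$ is bounded. So the set of admissible $r$ is the closed ray $[\sup_{y\in M} \|x-y\|, \infty)$ and hence
\[
t(x) = \sup_{y \in M} \|x-y\|.
\]
This representation is the key: it exhibits $t$ as a supremum of very simple convex functions.

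Convexity follows immediately: for each fixed $y \in H$, the map $x \mapsto \|x-y\|$ is convex (a direct consequence of the triangle inequality and absolute homogeneity), and a pointwise supremum of convex functions is convex. For the growth bounds, fix any $y_0 \in M$ and set $C := \sup_{y \in M} \|y\|$, which is finite because $M$ is bounded. Then by the triangle inequality
\[
t(x) \;\le\; \sup_{y \in M}\bigl(\|x\|+\|y\|\bigr) \;=\; \|x\| + C,
\]
and using just the single point $y_0$,
\[
t(x) \;\ge\; \|x - y_0\| \;\ge\; \|x\| - \|y_0\| \;\ge\; \|x\| - C.
\]

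For the corollary, the lower bound $t(x) \ge \|x\|-C$ makes $t$ coercive, so any minimizing sequence $(x_n)$ is norm-bounded. In a general Hilbert space one extracts a weakly convergent subsequence $x_{n_k} \rightharpoonup x_*$; since $t$ is continuous and convex it is weakly lower semicontinuous, which gives $t(x_*) \le \liminf t(x_{n_k}) = \inf_H t$, so a minimum is attained. For uniqueness, suppose $x_1 \ne x_2$ both realize the minimum value $m$, and let $x_0 := (x_1+x_2)/2$. I would invoke the parallelogram identity in the form
\[
\|x_0 - y\|^2 \;=\; \tfrac{1}{2}\|x_1-y\|^2 + \tfrac{1}{2}\|x_2-y\|^2 - \tfrac{1}{4}\|x_1-x_2\|^2 \;\le\; m^2 - \tfrac{1}{4}\|x_1-x_2\|^2
\]
for every $y \in M$. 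Taking the supremum over $y \in M$ yields $t(x_0)^2 \le m^2 - \tfrac{1}{4}\|x_1-x_2\|^2 < m^2$, contradicting the minimality of $m$.

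The main obstacle is really the uniqueness step: it is the only place where the Hilbert-space structure is used in an essential way, through the parallelogram identity (in a general normed space the statement can fail). Existence in infinite dimension is a minor technical point handled by weak compactness plus weak lower semicontinuity, while convexity and the two-sided bound reduce to textbook consequences of the triangle inequality.
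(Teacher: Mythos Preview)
Your proof is correct and follows essentially the same route as the paper's: convexity via the triangle inequality (you package this as ``supremum of convex functions''), the two-sided bound via $\|x\|\pm C$ with $C=\sup_{y\in M}\|y\|$, and uniqueness via the parallelogram identity (the paper writes it as $4\|z-(x_1+x_2)/2\|^2+\|x_2-x_1\|^2=2\|z-x_1\|^2+2\|z-x_2\|^2$, which is your identity rearranged). The only cosmetic difference is that you spell out the existence argument (weak compactness plus weak lower semicontinuity), whereas the paper simply cites a reference for ``convex and coercive $\Rightarrow$ minimum attained''.
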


\begin{proof} Let us consider any $x_1, x_2 \in H$ and any $\alpha,\beta >0$, $\alpha+\beta=1$.
Then for every $z\in M$, we have
$$
\|z-(\alpha x_1+\beta x_2)\|=\|\alpha (z-x_1)+\beta (z-x_2)\|\leq \alpha \|z-x_1\|+\beta \|z-x_2\|\leq \alpha t(x_1)+\beta t(x_2).
$$
Therefore,
$t(\alpha x_1+\beta x_2)\leq \alpha t(x_1)+\beta t(x_2)$,
hence, the function $t$ is convex. Since $M$ is bounded, then there is a real number $C>0$ such that $\|z\|\leq C$ for all $z\in M$.
Consequently,
$$
\|x\|-C\leq \|x\|-\|z\|\leq \|x-z\|\leq  \|x\|+\|z\|\leq \|x\|+C
$$
for every $x\in H$ and every $z\in M$.
This means that $t$ is coercive, hence, $t$ has a point of global minimum value, see e.~g. Corollary 11.30 in \cite{BaCo}.

Now, we suppose that there are points $x_2\neq x_1$, where $t$ attains its minimum value, say $m_t$.
Therefore, $M \subset B(x_1,m_t)$, $M \subset B(x_2,m_t)$, and $M \subset B(x_1,m_t)\cap B(x_2,m_t)$.
It is clear that $B(x_1,m_t)\cap B(x_2,m_t) \subset B\left( \frac{x_1+x_2}{2}, \sqrt{m_t^2-\|x_2-x_1\|^2/4}\right)$.
Indeed, for any $z\in B(x_1,m_t)\cap B(x_2,m_t)$, we have
$$
4\|z-(x_1+x_2)/2\|^2+\|x_2-x_1\|^2=2\|z-x_1\|^2+2\|z-x_2\|^2\leq 4m_t^2.
$$
Hence,
$t\left( \frac{x_1+x_2}{2}\right)<m_t$, that is impossible.
\end{proof}

\begin{prop}\label{prop_3}
Let $M$ be a non-empty  convex bounded closed set in $H$, and a function $t:H \rightarrow \mathbb{R}$ such that
$t(x)=\min \{r \,|\, M\subset B(x,r)\}$.
Then for any $y\in H \setminus M$, there is a point $z\in \bd (M)$ such that $t(z) \leq t(y)$. Moreover, if $H$ is finite-dimensional, then  $t(z) <t(y)$.
\end{prop}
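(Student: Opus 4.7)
The plan is to take $z$ to be the metric projection of $y$ onto $M$, i.e., the unique point of $M$ closest to $y$. This $z$ exists and is unique because $M$ is a nonempty closed convex subset of the Hilbert space $H$ (classical best-approximation theorem in Hilbert spaces). Since $y \notin M$ while $z \in M$, we have $\|z - y\| > 0$; moreover $z \in \bd(M)$, because otherwise a small open ball around $z$ would sit inside $M$, and then sliding a little from $z$ toward $y$ along the segment $[z,y]$ would remain in $M$ and produce a point of $M$ closer to $y$ than $z$, contradicting the defining property of the projection.

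The key ingredient is the variational characterization of the metric projection: $(y - z,\, w - z) \leq 0$ for every $w \in M$. Substituting this into the Hilbert-space identity
$$\|w - y\|^2 = \|w - z\|^2 + \|z - y\|^2 - 2(w - z,\, y - z)$$
and dropping the nonnegative last term gives $\|w - z\|^2 \leq \|w - y\|^2 - \|z - y\|^2$ for every $w \in M$. Since $\|w - y\| \leq t(y)$ by the very definition $t(y) = \sup_{w\in M}\|w-y\|$, taking the supremum over $w \in M$ on the left yields $t(z)^2 \leq t(y)^2 - \|z - y\|^2 \leq t(y)^2$, proving $t(z) \leq t(y)$.

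For the strict inequality, observe that $\|z - y\| > 0$ already forces $t(z)^2 \leq t(y)^2 - \|z-y\|^2 < t(y)^2$, so in fact $t(z) < t(y)$ holds; I do not actually see where the finite-dimensional hypothesis is needed in this projection argument, though it presumably enters in an alternative proof that relies on compactness of $M$ to attain the sup in the definition of $t$. I do not anticipate a serious obstacle: the entire proof is essentially one line of Hilbert-space geometry, and the only two points that warrant a moment of care are showing $z \in \bd(M)$ (immediate from the optimality of the projection) and recognizing $t(y)$ as the supremum of $\|w - y\|$ over $w \in M$ (immediate from the definition, since $M$ is bounded).
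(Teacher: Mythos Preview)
Your proof is correct and follows the same core idea as the paper's: take $z$ to be the metric projection of $y$ onto $M$ and exploit the variational inequality $(y-z,\,w-z)\le 0$. The paper, however, uses only the qualitative consequence $\|w-z\|<\|w-y\|$ for each $w\in M$, which yields $t(z)\le t(y)$ but requires compactness of $M$ (hence finite dimension) to upgrade this pointwise strict inequality to a strict inequality between the two suprema. Your quantitative version $\|w-z\|^2 \le \|w-y\|^2 - \|z-y\|^2$ carries the \emph{uniform} gap $\|z-y\|^2>0$ through the supremum over $w\in M$, so your conclusion $t(z)<t(y)$ indeed holds in every Hilbert space; your suspicion that the finite-dimensional hypothesis is superfluous is well founded.
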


\begin{proof} Let $z \in M$ be the closest point of $M$ to the point $y$. It is clear that $M \subset \{x \in H\, |\, \|x-z\|<\|x-y\|\}$.
Hence if $M \subset B(y,r)$ for some $r>0$, then $M \subset B(z,r)$. It implies that $t(z)\leq t(y)$.
If $H$ is finite-dimensional, then $M$ is compact, hence there is $u\in M$ such that $\|u-z\|\geq\|x-z\|$ for all $x \in M$.
Therefore, $t(z)=\|u-z\|<\|u-y\|\leq t(y)$.
\end{proof}
\smallskip

For any non-empty bounded closed set $M$ in $H$, a point $y\in H$ ($y\in M$) with minimal values of the function $x \mapsto t(x)$ on $H$ (respectively, on $M$)
is the Chebyshev center (respectively, the self Chebyshev center) for $M$. Two above propositions imply

\begin{corollary}\label{corol_cent1}
Let $M$ be a non-empty  convex bounded closed set in $H$, then it has a unique Chebyshev center, which by necessity lies in $M$.
\end{corollary}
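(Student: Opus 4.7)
The plan is to combine Proposition \ref{prop_2} and Proposition \ref{prop_3} in a direct way. First, by Proposition \ref{prop_2}, the function $t(x)=\min\{r\,|\,M\subset B(x,r)\}$ is convex and coercive on $H$, and attains its global minimum value, say $m_t$, at a \emph{unique} point $x_0\in H$. By the definition given just before the corollary, this unique minimizer $x_0$ is the Chebyshev center of $M$, so the uniqueness part is already in hand as soon as we identify the Chebyshev center with the minimizer of $t$.

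It remains to show the location statement: $x_0\in M$. I would argue by contradiction, supposing $x_0\in H\setminus M$. Since $M$ is convex, bounded, and closed, Proposition \ref{prop_3} applies and provides a point $z\in\bd(M)$ with $t(z)\le t(x_0)=m_t$. But $m_t$ is the global minimum of $t$, so $t(z)=m_t$, and then the uniqueness of the minimizer in Proposition \ref{prop_2} forces $z=x_0$. However $z\in\bd(M)\subset M$ (as $M$ is closed) while $x_0\notin M$, a contradiction. Hence $x_0\in M$.

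Note that the argument does not require finite-dimensionality: the stronger strict inequality in Proposition \ref{prop_3} is not needed here, because uniqueness of the minimizer of $t$ already upgrades the non-strict inequality $t(z)\le t(x_0)$ to the equality $z=x_0$, which is enough to derive the contradiction. I do not foresee a real obstacle; the only subtlety is to notice that convexity, boundedness, and closedness of $M$ are all used (convexity and boundedness to invoke Proposition \ref{prop_3}, and closedness to ensure $\bd(M)\subset M$), and that the \emph{self} Chebyshev center coincides with the ordinary Chebyshev center precisely because the latter is shown to lie in $M$.
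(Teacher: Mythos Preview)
Your argument is correct and is exactly the combination of Propositions~\ref{prop_2} and~\ref{prop_3} that the paper has in mind; the paper itself offers no further details beyond ``two above propositions imply''. Your observation that the non-strict inequality from Proposition~\ref{prop_3} suffices (via uniqueness) is the right way to handle the general Hilbert space case, and the final remark about the self Chebyshev center, while not part of the corollary, is a correct side comment.
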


Obviously $\bd(M)$ has the same Chebyshev center as $M$ from the above corollary. It is not the case for self Chebyshev centers, of course.
\smallskip

\begin{prop}\label{prop_4}
Suppose that $M_1$ and $M_2$ are convex bounded closed set in $H$ and $M_1 \subset M_2$. If $r_i$ is the self Chebyshev radius of the set $\bd(M_i)$, $i=1,2$,
then $r_1\leq r_2$.
\end{prop}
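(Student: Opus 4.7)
The plan is to construct a single point $z_1\in\bd(M_1)$ for which every point of $\bd(M_1)$ lies within distance $r_2$ of $z_1$; this immediately forces $r_1\leq r_2$. The point $z_1$ will be obtained from a near self Chebyshev center of $\bd(M_2)$ by pushing it onto $M_1$ via the metric projection.

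First I would observe that for any convex bounded closed set $M\subset H$ and any $x\in H$ the farthest distance from $x$ over $M$ is realised on the boundary: extending the segment from $x$ through any interior point of $M$ to its first exit through $\bd(M)$ can only increase the distance. Consequently
$$
r_2=\inf_{x\in\bd(M_2)}\sup_{y\in M_2}\|x-y\|,
$$
so, having fixed $\varepsilon>0$, I can pick $z_2\in\bd(M_2)$ satisfying $\sup_{y\in M_2}\|z_2-y\|\leq r_2+\varepsilon$.

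Next I would invoke the nearest-point projection $\pi\colon H\to M_1$, which is a well-defined 1-Lipschitz retraction onto the closed convex set $M_1$ (this is the same map used implicitly in Proposition \ref{prop_3}), and set $z_1:=\pi(z_2)$. A short case analysis shows $z_1\in\bd(M_1)$: if $z_2\notin M_1$, then $z_1$ is a nearest point of $M_1$ to a point outside $M_1$, hence lies in $\bd(M_1)$; if instead $z_2\in M_1$, then $z_1=z_2$, and since $\intt(M_1)\subset\intt(M_2)$ while $z_2\in\bd(M_2)$, we must have $z_2\in\bd(M_1)$.

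To conclude, for any $y\in\bd(M_1)\subset M_1\subset M_2$, using $\pi(y)=y$ and the 1-Lipschitz property of $\pi$,
$$
\|z_1-y\|=\|\pi(z_2)-\pi(y)\|\leq\|z_2-y\|\leq r_2+\varepsilon.
$$
Taking the supremum over $y\in\bd(M_1)$ and letting $\varepsilon\to 0$ gives $r_1\leq r_2$. I do not expect any step to be a real obstacle; the only slightly subtle point is verifying that $z_1$ lands on $\bd(M_1)$ even when $z_2$ happens to lie inside $M_1$, which is handled by the interior-inclusion argument above.
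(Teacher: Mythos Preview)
Your proof is correct and follows essentially the same approach as the paper: pick a near-optimal self Chebyshev center on $\bd(M_2)$, push it onto $\bd(M_1)$ via the metric projection, and use the resulting contraction to bound $r_1$. The paper packages the projection step as an appeal to Proposition~\ref{prop_3}, whereas you invoke the $1$-Lipschitz property of $\pi$ directly and handle the case $z_2\in M_1$ via the interior-inclusion $\intt(M_1)\subset\intt(M_2)$; both routes are equivalent.
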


\begin{proof} For any $\varepsilon >0$, there is a point $y_{\varepsilon}\in \bd(M_2)$ such that
$\bd(M_2)\subset M_2 \subset B(y_{\varepsilon}, r_2+\varepsilon)$. Since $M_1 \subset M_2$, we have
$\bd(M_1)\subset M_1 \subset M_2 \subset B(y_{\varepsilon}, r_2+\varepsilon)$.
If $y_{\varepsilon}\in \bd(M_1)$, then we put $z_{\varepsilon}:=y_{\varepsilon}$. Now, suppose that $y_{\varepsilon}\not\in \bd(M_1)$ and consider the function
$t_1:H \rightarrow \mathbb{R}$ such that
$t_1(x)=\min \{r \,|\, M_1\subset B(x,r)\}$.
We see that $t_1(y_{\varepsilon})\leq r_2+\varepsilon$. By Proposition \ref{prop_3}, there is
$z_{\varepsilon}\in \bd (M_1)$ such that $t_1(z_{\varepsilon}) \leq t_1(y_{\varepsilon})$.
In both cases we have $t_1(z_{\varepsilon}) \leq t_1(y_{\varepsilon}) \leq r_2+\varepsilon$. Since $\varepsilon >0$ could be as small as we want, we get
$r_1 = \inf \{ t_1(z)\,|\, z \in \bd(M_2)\}\leq r_2$.
\end{proof}
\smallskip

\begin{remark}
Let us consider one more proof of Proposition \ref{prop_4} (we put  $\Gamma_1=\bd(M_1)$ and $\Gamma_2=\bd(M_2)$).
We know that $\delta (\Gamma_2)=\inf\limits_{x\in \Gamma_2} \min\{r\,|\,\Gamma_2\subset M_2\subset B(x,r)\}$, $\Gamma_1 \subset M_1 \subset M_2$,
and for any $x \in \Gamma_2$ there is $u \in \Gamma_1$ such that $\min\{r\,|\, \Gamma_1\subset B(u,r)\} \leq \min\{r\,|\, \Gamma_1\subset B(x,r)\}$ by
Proposition \ref{prop_3}. Therefore,
$$
\delta (\Gamma_1)=\inf\limits_{u\in \Gamma_1} \min\{r\,|\,\Gamma_1 \subset B(u,r)\}
\leq \inf\limits_{x\in \Gamma_2} \min\{r\,|\,\Gamma_1 \subset B(x,r)\}\leq \delta (\Gamma_2),
$$
that proves the proposition. It should be noted that Proposition \ref{prop_4} holds for non-convex $M_2$, but the proof is too
complicated for non-convex case.
\end{remark}

\section{Some auxiliary results for boundaries of convex polygons}\label{sect.1}

In what follows, the symbol $\|A\|$ means the cardinality of a given set $A$.
We identify the Euclidean plane with $\mathbb{R}^2$ supplied with the standard Euclidean metric~$d$, where $d(x,y)=\sqrt{(x_1-y_1)^2+(x_2-y_2)^2}$.

We call $\Gamma$ {\it a convex curve} if it is the boundary of some convex compact set in the Euclidean plane $\mathbb{R}^2$.
Important examples of convex curves are {\it  convex closed polygonal chains},
i.e. {\it boundaries of convex polygons}.
A polygon $P$ is called an {\it $n$-gon} if it has exactly $n$ vertices.
For $n=2$ we get line segments. The perimeter $L(P)$ of any $2$-gon
is defined as the double length of the line segment $P$. Such definition is justified,
since it leads to the continuity of the perimeter as a functional on the set of convex polygons with respect to the Hausdorff distance.

Let $\Gamma \subset \mathbb{R}^2$ be a compact set (in particular, a convex curve). We define the function $\mu:\Gamma \rightarrow \mathbb{R}$ as follows:
\begin{equation}\label{mufunk}
\mu (x)=\max\limits_{y \in \Gamma} d(x,y).
\end{equation}

For any polygon $P$ and any given point~$x\in P$,  $\max\limits_{y \in P} d(x,y)$ is achieved at a vertex of $P$ (see, e.g., Lemma 4.1 in  \cite{Walter2017}),
i.e., $\mu(x)$ is equal to the maximal distance from $x$ to vertices of $P$.
Note also that the diameter $\diam(P)=\max\limits_{x,y \in P} d(x,y)$  of a polygon $P$ always connects two vertices.

\smallskip

The following property (monotonicity of the perimeter) of convex curves is well known (see, e.g., \cite[\S 7]{BoFe1987}).

\begin{prop}\label{monotper}
If a convex curve $\Gamma_1$ is inside another convex curve $\Gamma_2$ in the Euclidean plane,
then the length of $\Gamma_1$ is less than or equal to the length of $\Gamma_2$, and equality holds if and only if $\Gamma_1=\Gamma_2$.
\end{prop}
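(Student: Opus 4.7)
The plan is to derive the inequality from Cauchy's formula, which expresses the perimeter of a planar convex body as an integral of its widths: for any compact convex body $K\subset \mathbb{R}^2$ with nonempty interior,
\[
L(\bd K)=\int_0^{\pi}w_K(\theta)\,d\theta,
\]
where $w_K(\theta)$ denotes the length of the orthogonal projection of $K$ onto any line of direction $\theta$. Let $K_i$ be the compact convex set bounded by $\Gamma_i$. The inclusion $K_1\subset K_2$ forces the projection of $K_1$ onto every line to lie inside the corresponding projection of $K_2$, whence $w_{K_1}(\theta)\le w_{K_2}(\theta)$ for every $\theta$. Integration yields $L(\Gamma_1)\le L(\Gamma_2)$ at once.

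For the equality clause I would argue by contradiction. Suppose $L(\Gamma_1)=L(\Gamma_2)$ but $K_1\subsetneq K_2$, and pick $p\in K_2\setminus K_1$. Since $K_1$ is a closed convex set, a standard separating-line argument produces a direction $\theta_0$ for which the projection of $p$ falls strictly outside the projection of $K_1$; consequently $w_{K_2}(\theta_0)>w_{K_1}(\theta_0)$. Continuity of the support functions, and hence of the widths, propagates the strict inequality to an open neighborhood of $\theta_0$ in $[0,\pi]$. Combined with $w_{K_2}-w_{K_1}\ge 0$ everywhere, this makes the two integrals differ strictly, contradicting the assumed equality. Thus $K_1=K_2$ and, in particular, $\Gamma_1=\Gamma_2$.

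The main obstacle is a careful justification of Cauchy's formula for non-smooth convex bodies, since its usual derivation via the Euler relation $w=h+h''$ requires $C^2$ regularity of the support function. I would handle this by approximation: both $L(\bd K)$ and $w_K$ are continuous in $K$ with respect to the Hausdorff metric (perimeter continuity on convex bodies is classical, and width continuity is immediate from the uniform continuity of support functions under Hausdorff convergence), so the formula extends from smooth bodies to arbitrary compact convex bodies with nonempty interior. An alternative, more elementary route would bypass integral geometry altogether: first prove the polygon case by the ``supporting cut'' argument, where one iteratively removes a vertex of the outer polygon $P_2$ by a supporting line of the inner polygon $P_1$ (each cut strictly shortens the perimeter by the triangle inequality), and then extend to convex curves by inscribing polygons and passing to the Hausdorff limit. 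This second approach makes the strict part of the equality case especially transparent, since performing even one nontrivial cut already strictly decreases the perimeter.
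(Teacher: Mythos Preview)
Your argument via Cauchy's projection formula is correct, including the equality analysis by separation and continuity of the width function; the alternative polygon-cutting route you sketch is likewise sound. Note, however, that the paper does not supply its own proof of this proposition: it merely records the statement as well known and cites Bonnesen--Fenchel, \S7, so there is no in-paper argument to compare against. Both of your approaches (integral-geometric and the elementary supporting-cut method) are among the standard proofs one finds in that reference and similar sources, so your write-up is entirely appropriate here. One small point: you restrict Cauchy's formula to bodies with nonempty interior, whereas the paper's notion of ``convex curve'' allows degenerate segments (with perimeter defined as twice the length); the formula still holds in that case, so you may simply drop the nonempty-interior caveat or remark that the degenerate case is immediate.
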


The following simple result is very useful.

\begin{prop}[\cite{BMNN2021}]\label{semcircle}
Let $\Gamma$ be a convex curve in $\mathbb{R}^2$ such that $\Gamma$ contains the line segment $[p,q]$ with the property
$\Gamma \subset \left\{ x\in \mathbb{R}^2\,|\, d(x,o)\leq \frac{1}{2}d(p,q) \right\}$, where $o$ is the  midpoint of $[p,q]$.
Then $\delta(\Gamma)= \frac{1}{2}d(p,q)=\mu(o)$. Moreover, $o$ is a unique self Chebyshev center for $\Gamma$.
\end{prop}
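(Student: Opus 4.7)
The plan is to set $R:=\tfrac{1}{2}d(p,q)$ and attack the equality $\delta(\Gamma)=R=\mu(o)$ in three short stages, then read off uniqueness from the equality case of the triangle inequality.

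First I would observe that $\mu(o)=R$. The hypothesis $\Gamma\subset B(o,R)$ gives $\mu(o)\le R$, while $p\in\Gamma$ with $d(o,p)=R$ yields $\mu(o)\ge R$. Since $o$ is the midpoint of a segment contained in $\Gamma$, we have $o\in\Gamma$, hence $\delta(\Gamma)\le\mu(o)=R$.

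Next I would establish the matching lower bound $\delta(\Gamma)\ge R$, which is the only real content. For any $x\in\Gamma$ the triangle inequality gives
\[
d(x,p)+d(x,q)\ \ge\ d(p,q)\ =\ 2R,
\]
so $\max\{d(x,p),d(x,q)\}\ge R$, and since $p,q\in\Gamma$ this forces $\mu(x)\ge R$. Taking the infimum over $x\in\Gamma$ yields $\delta(\Gamma)\ge R$, so $\delta(\Gamma)=R=\mu(o)$.

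Finally, for uniqueness, I would analyze when $\mu(x)=R$ for $x\in\Gamma$. Both $d(x,p)$ and $d(x,q)$ are then at most $R$, while their sum is at least $2R$, so necessarily $d(x,p)=d(x,q)=R$ and $d(x,p)+d(x,q)=d(p,q)$. The latter is the equality case in the triangle inequality in the Euclidean plane, which places $x$ on the segment $[p,q]$; combined with $d(x,p)=R=\tfrac{1}{2}d(p,q)$ this uniquely identifies $x$ with the midpoint $o$. Since the argument relies only on the triangle inequality and the given inclusion, there is no serious obstacle; the only subtlety worth flagging is the explicit use of the Euclidean equality case in the triangle inequality to pin down uniqueness.
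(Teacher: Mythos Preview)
Your proof is correct and entirely self-contained. Note that the paper does not actually supply its own proof of this proposition: it is quoted verbatim from \cite{BMNN2021} and stated without argument, so there is no ``paper's proof'' to compare against. Your triangle-inequality approach is the natural one and would be exactly what one expects the cited reference to contain; the three stages (computing $\mu(o)$, bounding $\delta(\Gamma)$ from below via $\max\{d(x,p),d(x,q)\}\ge R$, and extracting uniqueness from the equality case) are all sound, and nothing further is needed.
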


\medskip
Let us consider $n \geq 4$ and let $\Gamma$ be a boundary of a convex $n$-gon $P$.
We will show how to estimate $\delta(\Gamma)$. Let us denote $A_i$, $1\leq i \leq n$, consecutive vertices of a polygonal line $\Gamma$ (vertices of the polygon $P$),
assuming that indices are taken $\!\!\!\mod n$.
The direction of traversing the curve $\Gamma$
in the direction $A_1, A_2,...,$ is called {\it positive}, and the opposite direction to it is called {\it negative}.
\smallskip

For a given point $A\in \Gamma$, we consider the set
\begin{equation}\label{eq.fath1}
{\mathcal F}(A)=\{B\in \Gamma\, \mid \, d(A,B)=\mu(A)\},
\end{equation}
see \eqref{mufunk}. It is clear that ${\mathcal F}(A)$ contains only vertices of $\Gamma$.

\begin{figure}[t]
\begin{minipage}[h]{0.35\textwidth}
\center{\includegraphics[width=0.89\textwidth]{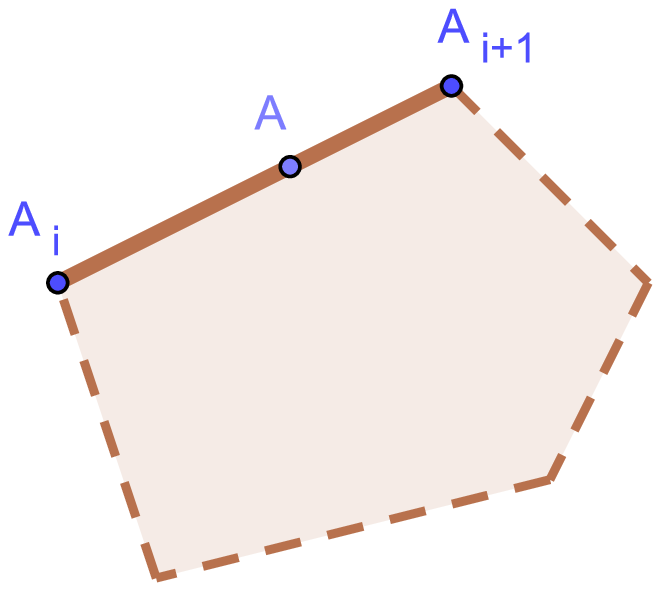}} \\ a)                     \\
\end{minipage}
\quad\quad\quad\quad
\begin{minipage}[h]{0.42\textwidth}
\center{\includegraphics[width=0.89\textwidth]{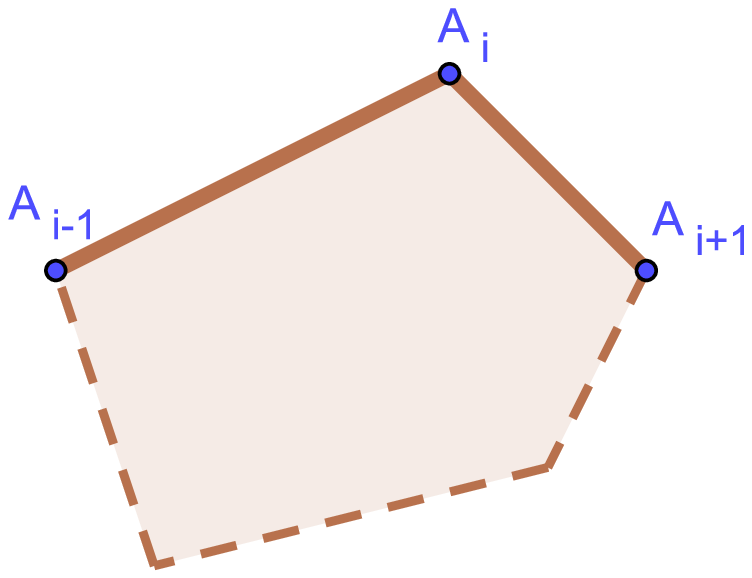}}   \\ b) \\
\end{minipage}
\caption{a) $N(A)$ for an interior point of $[A_i,A_{i+1}]$; b) $N(A)$ for a vertex $A=A_i$.}
\label{N(A)}
\end{figure}

For a given point $A\in \Gamma$, we denote by $T_+(A)$ (respectively, $T_-(A)$) the limit $\lim\limits_{B \to A} \frac{\overrightarrow{AB}}{d(A,B)}$,
where the points $B\in \Gamma$ tend to $A$ via the negative (respectively, the positive) direction.
These vectors have length $1$ and are called one-sided tangent vectors at the point $A$.
If $A$ is an interior point of some segment $[A_i,A_{i+1}]$, then $T_+(A)=- T_-(A)$.

\bigskip

For a given point $A\in [A_i,A_{i+1}] \subset \Gamma$ we put $N(A):=[A_i,A_{i+1}]$ if $A$ is in the interior of the segment $[A_i,A_{i+1}]$ and
$N(A):=[A_{i-1},A_{i}]\cup [A_i,A_{i+1}]$ if $A=A_i$ for some $i \in \mathbb{Z}_n$, see Fig.~\ref{N(A)}.

\begin{prop}\label{locmin_mu1}
In the above notations, for a given point $A\in  \Gamma$, the following conditions are equivalent:

1) The function $\mu$ {\rm(}see \eqref{mufunk}{\rm)} attains its minimal value on the set $N(A)$ at the point~$A$;

2) The point $A$ is the point of local minimum of the function $\mu: \Gamma \rightarrow \mathbb{R}$;

3) For any vector $\vec{v}\in \{T_+(A), T_-(A)\}$, there is a point $B \in {\mathcal F}(A)\subset \Gamma$ such that $(\vec{v}, \overrightarrow{AB})\leq 0$.
\end{prop}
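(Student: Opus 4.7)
The plan is to establish the cycle $(1)\Rightarrow(2)\Rightarrow(3)\Rightarrow(1)$, relying on two observations that recur throughout. First, since the maximum of $d(x,\cdot)$ over $\Gamma$ is always attained at a vertex of $P$, the set ${\mathcal F}(A)$ is a finite non-empty subset of the vertex set of $\Gamma$. Second, for any point $B\in\mathbb{R}^2$ and any unit vector $\vec{v}$,
$$
d(A+t\vec{v},B)^2=d(A,B)^2-2t(\vec{v},\overrightarrow{AB})+t^2,
$$
so in particular $(\vec{v},\overrightarrow{AB})\leq 0$ forces $d(A+t\vec{v},B)\geq d(A,B)$ for every $t\geq 0$, while $(\vec{v},\overrightarrow{AB})>0$ forces $d(A+t\vec{v},B)<d(A,B)$ for all sufficiently small $t>0$.

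The implication $(1)\Rightarrow(2)$ is immediate, because $N(A)$ contains the intersection of $\Gamma$ with a sufficiently small disk around $A$, so global minimality of $\mu$ on $N(A)$ at $A$ automatically entails local minimality of $\mu$ on $\Gamma$ at $A$.

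For $(3)\Rightarrow(1)$, I would note that every point of $N(A)$ is of the form $C=A+t\vec{v}$ with $\vec{v}\in\{T_+(A),T_-(A)\}$ and $t\geq 0$: when $A$ is in the interior of an edge the two tangents are antipodal, and when $A$ is a vertex they point along the two adjacent edges. Choosing $B\in{\mathcal F}(A)$ as guaranteed by (3) for this particular $\vec{v}$, the displayed identity yields $d(C,B)\geq d(A,B)=\mu(A)$, hence $\mu(C)\geq d(C,B)\geq\mu(A)$.

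The main step, and the one I expect to be the most delicate, is $(2)\Rightarrow(3)$, which I would prove by contraposition. Assume some $\vec{v}\in\{T_+(A),T_-(A)\}$ violates~(3), meaning $(\vec{v},\overrightarrow{AA_i})>0$ for every $A_i\in{\mathcal F}(A)$. Let $A_t\in\Gamma$ be the point at arclength distance $t$ from $A$ in direction $\vec{v}$, well-defined for small $t>0$. For vertices $A_i\in{\mathcal F}(A)$, the displayed identity together with the sign assumption gives $d(A_t,A_i)<\mu(A)$ for every sufficiently small $t>0$; for vertices $A_i\notin{\mathcal F}(A)$, the strict inequality $d(A,A_i)<\mu(A)$ is preserved for small $t$ by continuity. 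Since the vertex set of $\Gamma$ is finite, a common threshold $t_0>0$ works uniformly in $i$, so $\mu(A_t)<\mu(A)$ for all $0<t<t_0$, contradicting the assumed local minimality of $\mu$ at $A$. The subtle ingredient is precisely this uniformity across vertices, which is secured by the finiteness of the vertex set.
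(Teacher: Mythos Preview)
Your proof is correct and follows essentially the same cycle $(1)\Rightarrow(2)\Rightarrow(3)\Rightarrow(1)$ as the paper, with the same contrapositive argument for $(2)\Rightarrow(3)$ and the same direct verification for $(3)\Rightarrow(1)$. You are slightly more explicit than the paper in writing out the distance-squared identity and in stressing that the finiteness of the vertex set secures the uniform threshold, but the underlying ideas are identical.
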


\begin{proof} Obviously, 1) implies 2).
Let us prove $2)\Rightarrow 3)$. Suppose that 2) holds but 3) does not hold. Choose a vector $\vec{v}\in \{T_+(A), T_-(A)\}$, such that $(\vec{v}, \overrightarrow{AB})> 0$ for any $B \in {\mathcal F}(A)$,
and consider the points $A(\varepsilon)=A+\varepsilon \vec{v} \in \Gamma$.
Since $(\vec{v}, \overrightarrow{AB})> 0$, then $d(A(\varepsilon), B)< d(A, B)=\mu(A)$
for sufficiently small $\varepsilon >0$.
If $C$ is any vertex of $P$ that is not in ${\mathcal F}(A)$, then $d(A,C)<\mu(A)$, hence, $d(A(\varepsilon), C)< \mu(A)$ for sufficiently small $\varepsilon >0$.
Therefore, $d(A(\varepsilon), D)< \mu(A)$ for any vertex $D$ of $P$, that implies $\mu(A(\varepsilon))< \mu(A)$ for sufficiently small $\varepsilon >0$.
Hence, we get the contradiction with the condition 2) and, therefore, 2) implies 3).

Now, let us prove $3)\Rightarrow 1)$. We suppose that 3) holds.
Let us choose any vector $\vec{v}\in \{T_+(A), T_-(A)\}$
and consider the point $A(\varepsilon)=A+\varepsilon \vec{v}$ for any $\varepsilon >0$.
By the condition 3), there is $B \in {\mathcal F}(A)\subset \Gamma$ such that $(\vec{v}, \overrightarrow{AB})\leq 0$.
It implies $d(A(\varepsilon), B)> d(A, B)=\mu(A)$. Hence, $\mu(A(\varepsilon))\geq d(A(\varepsilon), B) > \mu(A)$ for all $\varepsilon >0$.
Since we can take both $T_+(A)$ and $T_-(A)$ as $\vec{v}$, we get that
$A$ is the minimal value point of $\mu$ on the set $N(A)$, hence, the condition 1) holds.
\end{proof}

\begin{remark}
This proposition also follows from Proposition \ref{prop_2}. Indeed, the restriction of a convex function to any line segment is also a convex function.
For every convex function, any point of local minimum is also a point of global minimum.
\end{remark}
\smallskip

Using the equivalence of conditions 1) and 3) in the previous proposition, we obtain the following

\begin{corollary}\label{cor.angle1}
If the angle  at the vertex $A_i$ of the $n$-gon $P$ is less than $\pi/2$, then $A_i$ is not a minimal value point of the function $\mu$ on the set
$N(A_i)=[A_{i-1},A_{i}]\cup [A_i,A_{i+1}]$.
\end{corollary}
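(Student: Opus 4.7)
The plan is to apply the equivalence $1)\Leftrightarrow 3)$ from Proposition \ref{locmin_mu1} in its contrapositive form. To show that $A_i$ is not a minimum of $\mu$ on $N(A_i)$, it suffices to exhibit one tangent direction $\vec{v}\in\{T_+(A_i),T_-(A_i)\}$ such that $(\vec{v},\overrightarrow{A_iB})>0$ for every $B\in{\mathcal F}(A_i)$.

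First I would unpack the geometry at the vertex. The two one-sided tangent vectors $T_+(A_i)$ and $T_-(A_i)$ are precisely the unit vectors from $A_i$ along the edges $[A_i,A_{i-1}]$ and $[A_i,A_{i+1}]$, and the interior angle $\alpha$ at $A_i$ equals the angle between them. Call these vectors $\vec{e}$ and $\vec{f}$; by hypothesis $(\vec{e},\vec{f})=\cos\alpha>0$. Next, convexity of $P$ places every other vertex $A_j$ ($j\neq i$) inside the closed cone with apex $A_i$ spanned by the two edges, so
\[
\overrightarrow{A_iA_j}=s\,\vec{e}+t\,\vec{f},\qquad s,t\geq 0,\ (s,t)\neq(0,0).
\]
Since ${\mathcal F}(A_i)$ consists of vertices of $P$ distinct from $A_i$ (as $\mu(A_i)>0$), this representation applies to every $B\in{\mathcal F}(A_i)$.

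Now I would compute: taking $\vec{v}=\vec{e}$,
\[
(\vec{v},\overrightarrow{A_iB})=s+t\cos\alpha.
\]
Because $s,t\geq 0$, not both zero, and $\cos\alpha>0$, this quantity is strictly positive. Hence condition 3) of Proposition \ref{locmin_mu1} fails for $\vec{v}=T_+(A_i)$ (and symmetrically for $T_-(A_i)$). By that proposition, condition 1) must also fail, so $A_i$ is not a point of minimum of $\mu$ on $N(A_i)$, as claimed.

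There is no real obstacle here; the only subtle point is making sure that $(s,t)\neq(0,0)$, which is automatic since $B\neq A_i$, and that the strict inequality $\cos\alpha>0$ is used to handle the case where $B$ lies on the edge opposite to the chosen direction (i.e. $s=0$, $t>0$). Both are immediate from the hypothesis $\alpha<\pi/2$ and the convexity of $P$.
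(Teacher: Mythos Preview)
Your proof is correct and follows exactly the route the paper indicates: the paper simply states that the corollary follows from the equivalence of conditions 1) and 3) in Proposition \ref{locmin_mu1}, without spelling out the details. You have supplied precisely those details---the convex-cone representation of $\overrightarrow{A_iB}$ and the observation that $\cos\alpha>0$ forces $(\vec v,\overrightarrow{A_iB})>0$ for either tangent direction---so there is nothing to add.
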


For a given point $A\in \Gamma$ we have $d(A,A_i)=d(A,A_j)$ if and only if $A$ is on the perpendicular bisector of $A_i$ and $A_j$.
Hence, we have only finite numbers of points $A\in \Gamma$ such that the set ${\mathcal F}(A)$ has more that one point.
Proposition \ref{locmin_mu1} implies

\begin{corollary}\label{cor.far1}
Suppose that the point $A\in \Gamma$  is such that the set ${\mathcal F}(A)$ has only one point. If $A$ is the local minimum point of the function $\mu$, then
$A$ is an interior point of some segment $[A_i,A_{i+1}]$ and $\overrightarrow{A_iA_{i+1}}$ is orthogonal to $\overrightarrow{AB}$, where $\{B\}={\mathcal F}(A)$.
\end{corollary}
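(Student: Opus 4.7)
The plan is to derive the corollary directly from the equivalence of conditions 1) and 3) in Proposition \ref{locmin_mu1}. Since $A$ is a local minimum of $\mu$ and $\mathcal F(A)=\{B\}$ is a singleton, condition 3) collapses to the requirement that the \emph{same} point $B$ satisfies both $(T_+(A),\overrightarrow{AB})\le 0$ and $(T_-(A),\overrightarrow{AB})\le 0$. Everything below is a short analysis of what these two inequalities force.

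The first step is to exclude the possibility $A=A_i$ (a vertex of $P$). At such a vertex, $T_+(A_i)$ and $T_-(A_i)$ are unit vectors along the two edges meeting at $A_i$, pointing respectively toward $A_{i\pm 1}$, and the angle $\theta_i$ between them is the interior angle of $P$ at $A_i$, so $\theta_i<\pi$. By the convexity of $P$, the segment $[A_i,B]$ lies in $P$, hence $\overrightarrow{A_iB}$ belongs to the closed convex cone generated by $T_+(A_i)$ and $T_-(A_i)$. Writing $\overrightarrow{A_iB}=\alpha T_+(A_i)+\beta T_-(A_i)$ with $\alpha,\beta\ge 0$, expanding the two inner products gives
$$
\alpha+\beta\cos\theta_i\le 0,\qquad \alpha\cos\theta_i+\beta\le 0,
$$
and summing yields $(\alpha+\beta)(1+\cos\theta_i)\le 0$. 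Because $\theta_i<\pi$ we have $1+\cos\theta_i>0$, whence $\alpha=\beta=0$, i.e.\ $B=A_i$. This contradicts $d(A_i,B)=\mu(A_i)>0$, so $A$ cannot be a vertex.

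Consequently $A$ lies in the relative interior of some edge $[A_i,A_{i+1}]$. There one has $T_+(A)=-T_-(A)$, both being unit vectors along the line through $A_i$ and $A_{i+1}$. The two inequalities therefore read $(T_+(A),\overrightarrow{AB})\le 0$ and $-(T_+(A),\overrightarrow{AB})\le 0$, forcing $(T_+(A),\overrightarrow{AB})=0$. Since $T_+(A)$ is parallel to $\overrightarrow{A_iA_{i+1}}$, this gives $\overrightarrow{A_iA_{i+1}}\perp\overrightarrow{AB}$, which is exactly the stated conclusion.

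The only genuine obstacle is the vertex case; it is not automatic from Proposition \ref{locmin_mu1} alone and crucially uses convexity of $P$ to place $\overrightarrow{A_iB}$ inside the cone spanned by the two tangent vectors. Once that is settled, the edge-interior case is a one-line computation exploiting $T_+(A)=-T_-(A)$.
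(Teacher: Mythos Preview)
Your proof is correct and follows precisely the route the paper intends: the corollary is stated there as a direct consequence of Proposition~\ref{locmin_mu1}, without further details, and your argument spells out that deduction by specializing condition~3) to a singleton $\mathcal F(A)$. Your handling of the vertex case via the cone decomposition is a clean way to make explicit the one point the paper leaves implicit.
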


\begin{prop}\label{pr.simple1}
For a given $n$-gon $P$ with $\Gamma =\bd(P)$, we consider the set
$$
G_{\operatorname{locmin}}=\{x \in \Gamma\,|\, x\mbox{ is a local minimum point of the function }\mu\}.\label{deflocmin}
$$
If there is $i \in \mathbb{Z}_n$ such that $A_i \in G_{\operatorname{locmin}}$ and $\mu(A_i)= \max \left\{d(A_i, A_{i-1}), d(A_i, A_{i+1})\right\}$,
then $L(\Gamma)\geq (2+\sqrt{2})\cdot \delta(\Gamma)$.
\end{prop}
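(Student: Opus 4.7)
The plan is to use condition 3) of Proposition~\ref{locmin_mu1} to produce a second farthest vertex $B\in\mathcal{F}(A_i)$ lying on the opposite side of $A_i$ from the neighbor that realizes $\mu(A_i)$, and then to split $\Gamma$ into two polygonal arcs connecting $B$ to that neighbor, each of which I can bound below by a straight-line distance.

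Without loss of generality I may assume $R:=\mu(A_i)=d(A_i,A_{i+1})\geq d(A_i,A_{i-1})$, so that $A_{i+1}\in\mathcal{F}(A_i)$. Choose Cartesian coordinates with $A_i=(0,0)$ and $A_{i+1}=(R,0)$; by convexity of $P$ together with the fact that $[A_i,A_{i+1}]$ is an edge, the entire polygon lies in the closed upper half plane, so every vertex has nonnegative $y$-coordinate. In this frame $T_-(A_i)=(1,0)$. Applying condition 3) of Proposition~\ref{locmin_mu1} with $\vec v=T_-(A_i)$ (which is available since $A_i\in G_{\operatorname{locmin}}$, i.e.\ condition 2) holds), I obtain a vertex $B\in\mathcal{F}(A_i)$ with $(T_-(A_i),\overrightarrow{A_iB})\leq 0$, that is $B_x\leq 0$. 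Then $\|B\|=R$, $B_y\geq 0$, and the inequality $B_x\leq 0<R$ guarantees $B\neq A_i$ and $B\neq A_{i+1}$.

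A direct computation using $B_x^2+B_y^2=R^2$ gives
\begin{equation*}
d(A_{i+1},B)^2 \;=\; (R-B_x)^2+B_y^2 \;=\; 2R(R-B_x)\;\geq\; 2R^2,
\end{equation*}
hence $d(A_{i+1},B)\geq R\sqrt{2}$. The two distinct vertices $A_{i+1}$ and $B$ split $\Gamma$ into two polygonal arcs. The arc that passes through $A_i$ consists of a sub-arc from $A_{i+1}$ to $A_i$ and a sub-arc from $A_i$ to $B$; applying the polygonal (triangle) inequality to each of these sub-arcs yields a lower bound of $d(A_{i+1},A_i)+d(A_i,B)=R+R=2R$. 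The other arc has length at least $d(A_{i+1},B)\geq R\sqrt{2}$. Summing and using $\delta(\Gamma)\leq \mu(A_i)=R$ (since $A_i\in\Gamma$), I get
\begin{equation*}
L(\Gamma)\;\geq\;2R+R\sqrt{2}\;=\;(2+\sqrt{2})\,R\;\geq\;(2+\sqrt{2})\,\delta(\Gamma),
\end{equation*}
which is the desired inequality.

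There is no serious obstacle here: the whole argument is a short application of Proposition~\ref{locmin_mu1} together with the triangle inequality. The one point that demands care is checking that $B$ is genuinely distinct from both $A_i$ and $A_{i+1}$, so that the arc through $A_i$ is non-degenerate and may be split at $A_i$ before applying the triangle inequality; this is immediate from $\|B\|=R>0$ and $B_x\leq 0<R$. Note that the inequality is sharp only in degenerate configurations (e.g.\ when the arc not passing through $A_i$ degenerates to a straight segment and $B$ lies on the $y$-axis), which is consistent with the constant $2+\sqrt{2}$ being strictly larger than the conjectural extremal constant $\tfrac{4}{3}\sqrt{3+2\sqrt{3}}$ for quadrilaterals.
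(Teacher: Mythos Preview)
Your proof is correct and follows essentially the same approach as the paper: both locate a second vertex $B\in\mathcal{F}(A_i)$ making an angle at least $\pi/2$ at $A_i$ with the neighboring vertex already in $\mathcal{F}(A_i)$, and then bound the perimeter via the resulting triangle. The only cosmetic differences are that you invoke Proposition~\ref{locmin_mu1} directly (the paper re-derives this particular instance by an ad hoc contradiction argument) and that you bound $L(\Gamma)$ by splitting $\Gamma$ into two arcs and applying the triangle inequality, whereas the paper appeals to perimeter monotonicity for the inscribed triangle $A_{i+1}A_iB\subset P$.
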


\begin{proof}
By Corollary \ref{cor.far1} we see that the set ${\mathcal F}(A_i)$ has at least two elements.
Let us consider the set $J=\{j \in \mathbb{Z}_n\,|\, A_j \in F(A_i)\}=\{j \in \mathbb{Z}_n\,|\,d(A_i, A_j)=\mu(A_i)\}$.

Now, we suppose that for every $k,l \in J$ we have $\angle A_k A_i A_l < \pi/2$.
Let us fix $j\in J$ such that $j=i-1$ or $j=i+1$ (it is possible by the assumptions) and consider the points
$A_i(\varepsilon)=\varepsilon A_j +(1-\varepsilon) A_i\in \Gamma$ for $\varepsilon \in (0,1)$.
It is easy to see that
$$
d(A_i(\varepsilon), A_k)<d(A_i,A_k)=\mu(A_i)
$$
for all $k \in J$ and for sufficiently small $\varepsilon >0$ (due to the inequality $\angle A_j A_i A_k < \pi/2$).
Moreover, for all $k \in \mathbb{Z}_n \setminus J$, $k \neq i$, we have $d(A_i(\varepsilon), A_k)<\mu(A_i)$
for sufficiently small $\varepsilon >0$ (due to the inequality $d(A_i,A_k)<\mu(A_i)$).
So we have $\mu(A_i(\varepsilon)) <\mu(A_i)$ for all sufficiently small $\varepsilon >0$, that is contradicts to $A_i \in G_{\operatorname{locmin}}$.

Therefore, there are $k,l \in J$ such that $\angle A_k A_i A_l \geq \pi/2$. It means that $P$ contains the triangle $\triangle A_k A_i A_l$,
which has the perimeter at least
$(2+\sqrt{2})\cdot \delta(\Gamma)$ (we have this minimal value for $\angle A_k A_i A_l = \pi/2$).
Hence, $L(\Gamma)\geq (2+\sqrt{2})\cdot \delta(\Gamma)$.
\end{proof}

\medskip

In what follows, we call an $n$-gon $P$ {\it extremal} if it has the smallest perimeter among all convex $n$-gons whose boundaries have a given
value of the self Chebyshev radius \eqref{chebir1} (i.~e. $L(\Gamma)/\delta(\Gamma)$ has a minimal possible value, where $\Gamma = \bd(P)$). \label{defextrem}

\begin{remark}
The existence of an extremal $n$-gon for every $n \geq 3$ can be proved using a limiting argument.
The limit polygon cannot be an $m$-gon with  $m<n$, since for any $m$-gon  we can easily construct an $n$-gon with the same self Chebyshev radius for its boundary and with
a bigger perimeter. Indeed, it suffices to replace one side of a given $m$-gon to a suitable almost straight polygonal chain of $n-m+1$ line segments.
\end{remark}

The self Chebyshev radius $\delta(\Gamma)$ of $\Gamma$ coincides with the minimal value of the function~$\mu$ (see \eqref{mufunk}).

%We consider the following set:
%\begin{equation}\label{eq.mingam1}
%\Lambda=\{x\in \Gamma\,|\, \mu(x)=\delta(\Gamma)\}.
%\end{equation}

\smallskip

For fixed $n$-gon $P$ with $\Gamma=\bd(P)$, we consider the function $t:\mathbb{R}^2 \rightarrow \mathbb{R}$ such that
\begin{equation}\label{eq_fuct1}
t(x)=\min \{r \,|\, \Gamma \subset B(x,r)\}.
\end{equation}
We know that this function is convex with an unique point of the global minimum $x_0$ (Proposition \ref{prop_2}).
The point $x_0$ is the Chebyshev center of $\Gamma$ (and $P$), $t(x_0)$ is the Chebyshev radius $r_{\Gamma}$ of $\Gamma$ (and $P$).
We know also that $x_0 \in P$ by Corollary \ref{corol_cent1}.

Since the function $t$ is convex, for any $r >r_{\Gamma}$, the set $U_r:=\{x\in \mathbb{R}^2\,|\, t(x)\leq r\}$ is a convex set, and
$L_r:=\{x\in \mathbb{R}^2\,|\, t(x)=r\}$ is a closed convex curve.

\begin{lemma} \label{le_loc1}
For every $r >r_{\Gamma}$, the convex curve $L_r$ is the union of finite circular arcs of radius $r$ with the center at some vertices of $\Gamma$.
A common point of two arcs with different centers $A_i$ and $A_j$ is on the same distance from these two points.
\end{lemma}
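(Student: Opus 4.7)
The plan is to identify the sublevel set $U_r$ explicitly as a finite intersection of closed disks; once this is done, the structure of $L_r = \bd(U_r)$ is essentially forced.

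The first and main step is to show that for every $x \in \mathbb{R}^2$,
\[
t(x) \;=\; \max_{1\leq i \leq n} d(x, A_i).
\]
The inequality $\geq$ is immediate since every vertex lies on $\Gamma$. For the reverse inequality I would fix $x$ and use the fact that $y \mapsto \|y-x\|$ is a convex function on $\mathbb{R}^2$; its restriction to each edge $[A_i,A_{i+1}]$ is therefore convex and attains its maximum on the segment at an endpoint. Consequently $\max_{y \in \Gamma} d(x,y)$ is realised at a vertex. (This extends Lemma~4.1 of \cite{Walter2017}, stated there for $x \in P$, to arbitrary $x \in \mathbb{R}^2$.)

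The identification above gives $\Gamma \subset B(x,r) \Leftrightarrow d(x,A_i)\leq r$ for every $i$, and hence
\[
U_r \;=\; \bigcap_{i=1}^{n} B(A_i, r).
\]
Since $r > r_\Gamma = t(x_0)$, this intersection contains a neighbourhood of the Chebyshev centre $x_0$, so $U_r$ is a compact convex set with non-empty interior, and $L_r = \bd(U_r)$ is a Jordan curve. For each vertex $A_i$ the set
\[
\alpha_i \;:=\; L_r \cap S(A_i,r) \;=\; S(A_i,r)\cap \bigcap_{j\neq i} B(A_j,r)
\]
is the intersection of the circle $S(A_i,r)$ with a convex region, hence either empty, a single point, or a closed circular arc. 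Any $x \in L_r$ satisfies $\max_i d(x,A_i) = r$, so it lies in at least one $\alpha_i$. Thus $L_r = \bigcup_{i=1}^{n} \alpha_i$ is a finite union of circular arcs of radius $r$, each centred at a vertex of $\Gamma$.

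For the final assertion, if $x \in \alpha_i \cap \alpha_j$ with $i \neq j$, then by the very definition of the two arcs one has $d(x,A_i) = r = d(x,A_j)$, so $x$ is equidistant from $A_i$ and $A_j$. The whole argument relies on only one non-formal ingredient — the reduction in the first step to distances from vertices — so I anticipate no real obstacle beyond verifying that step carefully.
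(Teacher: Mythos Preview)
Your proof is correct and follows essentially the same approach as the paper: both hinge on the observation that $t(x)=\max_{i} d(x,A_i)$, so that $L_r$ locally coincides with some sphere $S(A_i,r)$. Your version is more explicit---you spell out the identity $U_r=\bigcap_i B(A_i,r)$ and the convexity argument for why the maximum is attained at a vertex---whereas the paper simply asserts that the farthest point $z\in\Gamma$ from $x$ is ``obviously'' a vertex and concludes immediately.
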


\begin{proof}
If $x \in L_r$, then $\Gamma \subset B(x,r)$ and there is a point $z \in \Gamma$ such that $d(x,z)=r$. Obviously, $z$ is a vertex of $\Gamma$
(and $P$). Since we have only finite number of the vertices, we get the lemma.
\end{proof}

It is clear that
\begin{equation}\label{eq_fuct2}
\delta(\Gamma)=\min \{r \in [r_{\Gamma},\infty)\,|\, \Gamma \cap L_r \neq \emptyset\}.
\end{equation}
Hence, we have $U_{\delta(\Gamma)}\subset P$ and
\begin{equation}\label{eq_fuct3}
G_{\min}:=L_{\delta(\Gamma)}\cap \Gamma =U_{\delta(\Gamma)}\cap \Gamma\neq \emptyset.
\end{equation}

If $z \in G_{\min}$ and $z$ is a smooth point of $L_{\delta(\Gamma)}$, then $z$ is necessarily in the interior of some segment $[A_i, A_{i+1}]\subset \Gamma$.
If $z \in G_{\min}$ and $z$ is a non-smooth point (an intersection point of two circular arcs with different centers) of $L_{\delta(\Gamma)}$,
then $z$ could be some vertex $A_i$ (in this case $\angle A_{i-1}A_iA_{i+1} \geq \pi/2$ by Corollary \ref{cor.angle1}).
\smallskip

It is clear that the set $G_{\min} \cap [A_i, A_{i+1}]$ is either empty or has exactly one point for any line segment $[A_i, A_{i+1}]\subset \Gamma$.
\bigskip

{\bf Let us suppose that $P$ is extremal}. Fix the vertex $A_i$ and consider two variations of $\Gamma$: $\Gamma_{\varepsilon}^+$ and $\Gamma_{\varepsilon}^-$,
where $\varepsilon \in (0,1)$.
This polygonal lines have the same vertices $A_j$ as $\Gamma$ has with one exception: instead of $A_i$ we take respectively
$A_i^+({\varepsilon})=\varepsilon A_{i+1} +(1-\varepsilon)A_i$ and $A_i^-({\varepsilon})=\varepsilon A_{i-1} +(1-\varepsilon)A_i$.
Since $A_i^+({\varepsilon})\in [A_i,A_{i+1}]$ and $A_i^-({\varepsilon})\in [A_{i-1},A_{i}]$, then $L(\Gamma_{\varepsilon}^+)<L(\Gamma)$ and
$L(\Gamma_{\varepsilon}^-)<L(\Gamma)$ for all $\varepsilon \in (0,1)$. Since $P$ is extremal, then $\delta(\Gamma_{\varepsilon}^+)<\delta(\Gamma)$
and $\delta(\Gamma_{\varepsilon}^-)<\delta(\Gamma)$ (by Proposition \ref{prop_4} we know that $\delta(\Gamma_{\varepsilon}^+)\leq \delta(\Gamma)$
and $\delta(\Gamma_{\varepsilon}^-)\leq \delta(\Gamma)$).

Now, we deal with the {\bf variation $\Gamma_{\varepsilon}^-$}.  There is a point $z_{\varepsilon} \in \Gamma_{\varepsilon}^-$ such that
$d(z_{\varepsilon}, x)\leq \delta(\Gamma_{\varepsilon}^-)$ for any $x\in \Gamma_{\varepsilon}^-$, in particular, for all vertices $A_j$, $j\neq i$.
Moreover, $d(z_{\varepsilon}, A_i^-({\varepsilon}))\leq \delta(\Gamma_{\varepsilon}^-)$.
If we can choose $\varepsilon$ arbitrarily close to zero and such that $z_{\varepsilon} \in [A_i^-({\varepsilon}),A_{i+1}]$, then we get that
$[A_i, A_{i+1}]\cap G_{\min} \neq \emptyset$.

If $z_{\varepsilon} \not\in [A_i^-({\varepsilon}),A_{i+1}]$, then $z_{\varepsilon} \in \Gamma$, hence, $d(z_{\varepsilon}, A_i)>\delta(\Gamma_{\varepsilon}^-)$
(otherwise $\delta(\Gamma_{\varepsilon}^-)\geq \delta(\Gamma)$).
If we can choose $\varepsilon$ arbitrarily close to zero and such that $z_{\varepsilon} \in \Gamma$, then we get some $z\in G_{\min}$ such that $d(z,A_i)=\delta(\Gamma)$.
Indeed, if $z_{\varepsilon} \to z$ as $\varepsilon \to 0$, then
$d(z_{\varepsilon}, A_i^-({\varepsilon}))\leq \delta(\Gamma_{\varepsilon}^-)< d(z_{\varepsilon}, A_i)$, $ A_i^-({\varepsilon}) \to A_i$ and
$\delta(\Gamma_{\varepsilon}^-) \to \delta(\Gamma)$ as $\varepsilon \to 0$.

The {\bf variation $\Gamma_{\varepsilon}^+$} could be considered similarly. Hence we get the following

\begin{prop}\label{pr_extr1}
Suppose that an $n$-gon $P$ is extremal.
If $[A_i, A_{i+1}]\cap G_{\min} = \emptyset$ for some $i \in \mathbb{Z}_n$, then there are points
$z_1,z_2 \in G_{\min}$ such that $d(z_1, A_i)=d(z_2, A_{i+1})=\delta(\Gamma)$.
\end{prop}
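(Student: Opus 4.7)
The plan is to produce $z_1$ via the variation $\Gamma_\varepsilon^-$ already constructed above (which slides $A_i$ toward $A_{i-1}$, introducing a single new edge $[A_i^-(\varepsilon),A_{i+1}]$ that converges to $[A_i,A_{i+1}]$ as $\varepsilon\to 0$), and to produce $z_2$ by the analogous variation at the other endpoint of the edge, namely by sliding $A_{i+1}$ toward $A_{i+2}$, so that its new edge $[A_i,\tilde A_{i+1}(\varepsilon)]$ also converges to $[A_i,A_{i+1}]$. In each case, extremality together with Proposition~\ref{prop_4} gives the self Chebyshev radius of the perturbed curve $\leq\delta(\Gamma)$, and in fact strictly less.

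I would carry out the $\Gamma_\varepsilon^-$ step as follows. Pick a self Chebyshev center $z_\varepsilon$ of $\Gamma_\varepsilon^-$; it lies either on the new edge $[A_i^-(\varepsilon),A_{i+1}]$ or on the portion of $\Gamma_\varepsilon^-$ contained in $\Gamma$. If the first alternative occurred along a sequence $\varepsilon_k\to 0$, then a convergent subsequence $z_{\varepsilon_k}\to z\in[A_i,A_{i+1}]$, together with passing the bounds $d(z_{\varepsilon_k},A_j)\leq\delta(\Gamma_{\varepsilon_k}^-)\leq\delta(\Gamma)$ (for $j\neq i$) and $d(z_{\varepsilon_k},A_i^-(\varepsilon_k))\leq\delta(\Gamma_{\varepsilon_k}^-)$ (using $A_i^-(\varepsilon_k)\to A_i$) to the limit, would yield $\mu(z)\leq\delta(\Gamma)$ and hence $z\in G_{\min}\cap[A_i,A_{i+1}]$, contradicting the hypothesis. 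Therefore $z_\varepsilon\in\Gamma$ for all sufficiently small $\varepsilon$, and the strict inequality $d(z_\varepsilon,A_i)>\delta(\Gamma_\varepsilon^-)$ is then forced (otherwise $\mu_\Gamma(z_\varepsilon)\leq\delta(\Gamma_\varepsilon^-)<\delta(\Gamma)$ at a point of $\Gamma$, impossible). A subsequential limit $z_{\varepsilon_k}\to z_1\in\Gamma$ gives $\mu(z_1)\leq\delta(\Gamma)$, so $z_1\in G_{\min}$; squeezing $d(z_{\varepsilon_k},A_i)$ between $\delta(\Gamma_{\varepsilon_k}^-)$ and $d(z_{\varepsilon_k},A_i^-(\varepsilon_k))$ yields $d(z_1,A_i)=\delta(\Gamma)$.

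The mirror argument applied at the endpoint $A_{i+1}$ supplies $z_2\in G_{\min}$ with $d(z_2,A_{i+1})=\delta(\Gamma)$ by the identical scheme, which is the content of the preceding remark that ``$\Gamma_\varepsilon^+$ could be considered similarly''. The new edge $[A_i,\tilde A_{i+1}(\varepsilon)]$ of this variation again converges to $[A_i,A_{i+1}]$, so the first alternative is again ruled out by the hypothesis and the second alternative leads to the desired squeeze on $d(\cdot,A_{i+1})$.

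The one technical point needing care is the convergence $\delta(\Gamma_\varepsilon^-)\to\delta(\Gamma)$ as $\varepsilon\to 0$, since Proposition~\ref{prop_4} supplies only the ``$\leq$'' direction. The matching lower bound, however, drops out of the same compactness: if $\limsup\delta(\Gamma_{\varepsilon_k}^-)$ were strictly below $\delta(\Gamma)$, passing the relevant inequalities to the limit would force $\mu(z_1)<\delta(\Gamma)$ at the point $z_1\in\Gamma$, contradicting $\delta(\Gamma)=\min_\Gamma\mu$. Everything else reduces to routine bookkeeping about which inequalities pass upward and which downward in the limit.
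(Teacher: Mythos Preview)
Your proof is correct and follows essentially the same route as the paper: the variation $\Gamma_\varepsilon^-$ at $A_i$ yields $z_1$, and the mirror variation at $A_{i+1}$ (which is indeed the paper's $\Gamma_\varepsilon^+$ applied at the vertex $A_{i+1}$ rather than at $A_i$) yields $z_2$, with the hypothesis $[A_i,A_{i+1}]\cap G_{\min}=\emptyset$ ruling out the ``new edge'' alternative in both cases. Your added justification of the convergence $\delta(\Gamma_\varepsilon^-)\to\delta(\Gamma)$ and the explicit compactness/squeeze bookkeeping are details the paper leaves implicit, so your write-up is if anything more complete.
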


\section{A plan how to find extremal quadrilaterals and some preliminary considerations}\label{sect.2}

The main idea is to determine all extremal $4$-gons $P$ and to prove that this set coincides with magic kites.
We use the notations from the previous sections.
\medskip

Since $2+\sqrt{2}=3.414213...> \frac{4}{3}\sqrt{2\sqrt{3}+3}= 3.389946...$, then
we get the following simple consequence of Corollary \ref{cor.far1} and Proposition \ref{pr.simple1}.

\begin{corollary}\label{cor.simple1}
If there is $i \in \mathbb{Z}_4$ such that $A_i \in G_{\min}$,
then $L(\Gamma)\geq (2+\sqrt{2})\cdot \delta(\Gamma)$.
Consequently, for any extremal $4$-gon $P$, we have $A_i \not\in G_{\min}$ for all $i \in \mathbb{Z}_4$.
\end{corollary}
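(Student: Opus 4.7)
The plan is to reduce everything to Proposition~\ref{pr.simple1}, whose hypotheses become almost automatic once $A_i\in G_{\min}$, and then to combine the resulting bound $L(\Gamma)\geq (2+\sqrt{2})\,\delta(\Gamma)$ with the numerical comparison $2+\sqrt{2}>\tfrac{4}{3}\sqrt{3+2\sqrt{3}}$ and the magic kite example to derive the extremality statement.

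First I would observe that $G_{\min}\subseteq G_{\operatorname{locmin}}$, since any point where $\mu$ attains its global minimum on $\Gamma$ is a fortiori a local minimum point. So if $A_i\in G_{\min}$, the first hypothesis of Proposition~\ref{pr.simple1} holds for free.

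Next I would verify the second hypothesis, namely $\mu(A_i)=\max\{d(A_i,A_{i-1}),d(A_i,A_{i+1})\}$. Since $P$ is a $4$-gon, the vertices distinct from $A_i$ are just $A_{i-1}$, $A_{i+1}$ and the opposite vertex $A_{i+2}$, and $\mu(A_i)$ is the maximum of the three distances to these. If this maximum were attained \emph{uniquely} at $A_{i+2}$, then $\mathcal{F}(A_i)=\{A_{i+2}\}$ would be a singleton, and Corollary~\ref{cor.far1} would force the local minimum point $A_i$ to be an \emph{interior} point of some side $[A_j,A_{j+1}]$, which contradicts $A_i$ being a vertex. Therefore $\mu(A_i)$ is attained at $A_{i-1}$ or at $A_{i+1}$ (possibly in addition to $A_{i+2}$), which gives the required equality. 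Proposition~\ref{pr.simple1} then yields $L(\Gamma)\geq(2+\sqrt{2})\,\delta(\Gamma)$.

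For the \textquotedblleft consequently\textquotedblright\ part, any extremal $4$-gon $P$ exists by the remark preceding this corollary, and by definition minimises $L(\Gamma)/\delta(\Gamma)$ over convex $4$-gons. Since the magic kite defined on page~\pageref{defmagkite} is itself a convex $4$-gon realising the ratio $\tfrac{4}{3}\sqrt{3+2\sqrt{3}}\approx 3.390$, and this is strictly smaller than $2+\sqrt{2}\approx 3.414$, any extremal $4$-gon must satisfy $L(\Gamma)/\delta(\Gamma)<2+\sqrt{2}$; the bound of the first assertion is then violated, so no vertex $A_i$ of such a polygon can lie in $G_{\min}$. The only nontrivial step in the whole argument is excluding $\mathcal{F}(A_i)=\{A_{i+2}\}$, and Corollary~\ref{cor.far1} is tailored precisely for this, so the proof is expected to be short.
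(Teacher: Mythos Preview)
Your proof is correct and follows essentially the same approach as the paper, which simply cites Corollary~\ref{cor.far1} and Proposition~\ref{pr.simple1} together with the numerical inequality $2+\sqrt{2}>\tfrac{4}{3}\sqrt{2\sqrt{3}+3}$. You have merely spelled out the details: the key point, as you identified, is that $\mathcal{F}(A_i)$ cannot be the singleton $\{A_{i+2}\}$ by Corollary~\ref{cor.far1}, forcing $\mu(A_i)$ to be realised at an adjacent vertex so that Proposition~\ref{pr.simple1} applies.
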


\begin{prop}\label{pr_extr2}
Suppose that a quadrilateral $P$ is extremal. If $A\in G_{\min}$ and $A\in [A_i,A_{i+1}]$, then
$A_i, A_{i+1} \not\in {\mathcal F}(A)$. In particular, ${\mathcal F}(A)\subset \{A_{i+2}, A_{i+3}\}$.
\end{prop}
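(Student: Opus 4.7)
The plan is to argue by contradiction, using the local-minimum characterization in Proposition \ref{locmin_mu1} together with the perimeter-monotonicity bound from Proposition \ref{monotper}. By the symmetric roles of $A_i$ and $A_{i+1}$ it suffices to rule out $A_i \in \mathcal{F}(A)$, so I would assume the contrary: $d(A, A_i) = \mu(A) = \delta(\Gamma)$. Since $P$ is extremal, Corollary \ref{cor.simple1} forces $A$ into the open segment $(A_i, A_{i+1})$, so the one-sided tangents at $A$ are $T_-(A) = (A_i - A)/d(A, A_i)$ and $T_+(A) = -T_-(A)$.

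The first step is to invoke Proposition \ref{locmin_mu1}, item 3), with $\vec v = T_-(A)$: because $A \in G_{\min}$ is a global (hence local) minimum point of $\mu$, there must exist a vertex $B \in \mathcal{F}(A)$ with $(T_-(A), \overrightarrow{AB}) \leq 0$, equivalently with $\angle A_i A B \geq \pi/2$. The choice $B = A_i$ is immediately excluded, since $(T_-(A), \overrightarrow{AA_i}) = d(A, A_i) > 0$. Hence $B \in \{A_{i+1}, A_{i+2}, A_{i+3}\}$ and, in all cases, $d(A, B) = \delta(\Gamma)$.

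I would then split into two cases. In the first, $B = A_{i+1}$: then $A$ is equidistant from $A_i$ and $A_{i+1}$ on the segment joining them and is therefore its midpoint, forcing $d(A_i, A_{i+1}) = 2\delta(\Gamma)$; the triangle inequality applied along the broken line $A_{i+1}, A_{i+2}, A_{i+3}, A_i$ gives $d(A_{i+1}, A_{i+2}) + d(A_{i+2}, A_{i+3}) + d(A_{i+3}, A_i) \geq d(A_{i+1}, A_i) = 2\delta(\Gamma)$, whence $L(\Gamma) \geq 4\,\delta(\Gamma)$. In the second case, $B \in \{A_{i+2}, A_{i+3}\}$, and the closed triangle $\triangle A_i A B$ is contained in the convex polygon $P$; with two legs of length $\delta(\Gamma)$ and apex angle at least $\pi/2$, the law of cosines yields $d(A_i, B) \geq \sqrt{2}\,\delta(\Gamma)$, so its perimeter is at least $(2 + \sqrt{2})\,\delta(\Gamma)$, and Proposition \ref{monotper} lifts this to $L(\Gamma) \geq (2 + \sqrt{2})\,\delta(\Gamma)$.

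Since $2 + \sqrt{2} > \tfrac{4}{3}\sqrt{3 + 2\sqrt{3}}$ (and a fortiori $4 > \tfrac{4}{3}\sqrt{3 + 2\sqrt{3}}$), both cases contradict the extremality of $P$, so $A_i \notin \mathcal{F}(A)$, and by symmetry $A_{i+1} \notin \mathcal{F}(A)$. The main obstacle I anticipate is precisely the degeneracy in the sub-case $B = A_{i+1}$: there the triangle $\triangle A_i A B$ collapses to a segment, the monotonicity argument of the second case is unavailable, and one must fall back on the separate quadrilateral perimeter inequality. Everything else is routine once the right variation direction $T_-(A)$ is identified.
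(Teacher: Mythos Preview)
Your argument is correct and follows the same route as the paper: assume $A_i\in\mathcal F(A)$, apply Proposition~\ref{locmin_mu1} with the tangent direction pointing toward $A_i$ to produce a second far vertex $B$ with $\angle A_iAB\ge\pi/2$, and then bound $L(\Gamma)$ from below via the triangle $A_iAB\subset P$ using Proposition~\ref{monotper}. The only difference is that you explicitly separate out the degenerate sub-case $B=A_{i+1}$ (where the triangle collapses onto the side $[A_i,A_{i+1}]$) and handle it by the direct triangle-inequality estimate $L(\Gamma)\ge 4\,\delta(\Gamma)$; the paper's proof passes over this case silently, effectively relying on the degenerate-$2$-gon convention for perimeters. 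Your version is a bit more careful here, but the substance of the two proofs is the same.
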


\begin{proof}
By Corollary \ref{cor.simple1} we see that $A \not\in \{A_i, A_{i+1}\}$. Suppose that $A_i \in {\mathcal F}(A)$, i.~e. $d(A,A_i)= \delta(\Gamma)$.
By
Proposition \ref{locmin_mu1}, for the vector
$\vec{v}=\frac{1}{\|\overrightarrow{AA_{i}}\|}\overrightarrow{AA_{i}}\in \{T_+(A), T_-(A)\}$, there is a point $B \in {\mathcal F}(A)\subset \Gamma$
such that $(\vec{v}, \overrightarrow{AB})\leq 0$. Hence, $\angle A_i A B \geq \pi/2$ and $d(A,A_i)=d(A,B)=\delta(\Gamma)$.
Since the triangle $A_iAB$ is a subset of the quadrilateral $P$ and has the perimeter $\geq (2+\sqrt{2})\cdot \delta(\Gamma)$, then
$L(\Gamma)\geq (2+\sqrt{2})\cdot \delta(\Gamma)$. This means that $P$ is not extremal. Therefore, $A_i \not\in {\mathcal F}(A)$.
Similar arguments proves $A_{i+1} \not\in {\mathcal F}(A)$.
\end{proof}
\medskip

We are going to prove that
$L(\Gamma)\geq \frac{4}{3}\sqrt{2\sqrt{3}+3}\,\cdot \delta(\Gamma)$ for any extremal quadrilateral~$P$.
For this goal, we consider some auxiliary problems.
\smallskip

\begin{figure}[t]
\begin{minipage}[h]{0.32\linewidth}
\center{\includegraphics[width=1\linewidth]{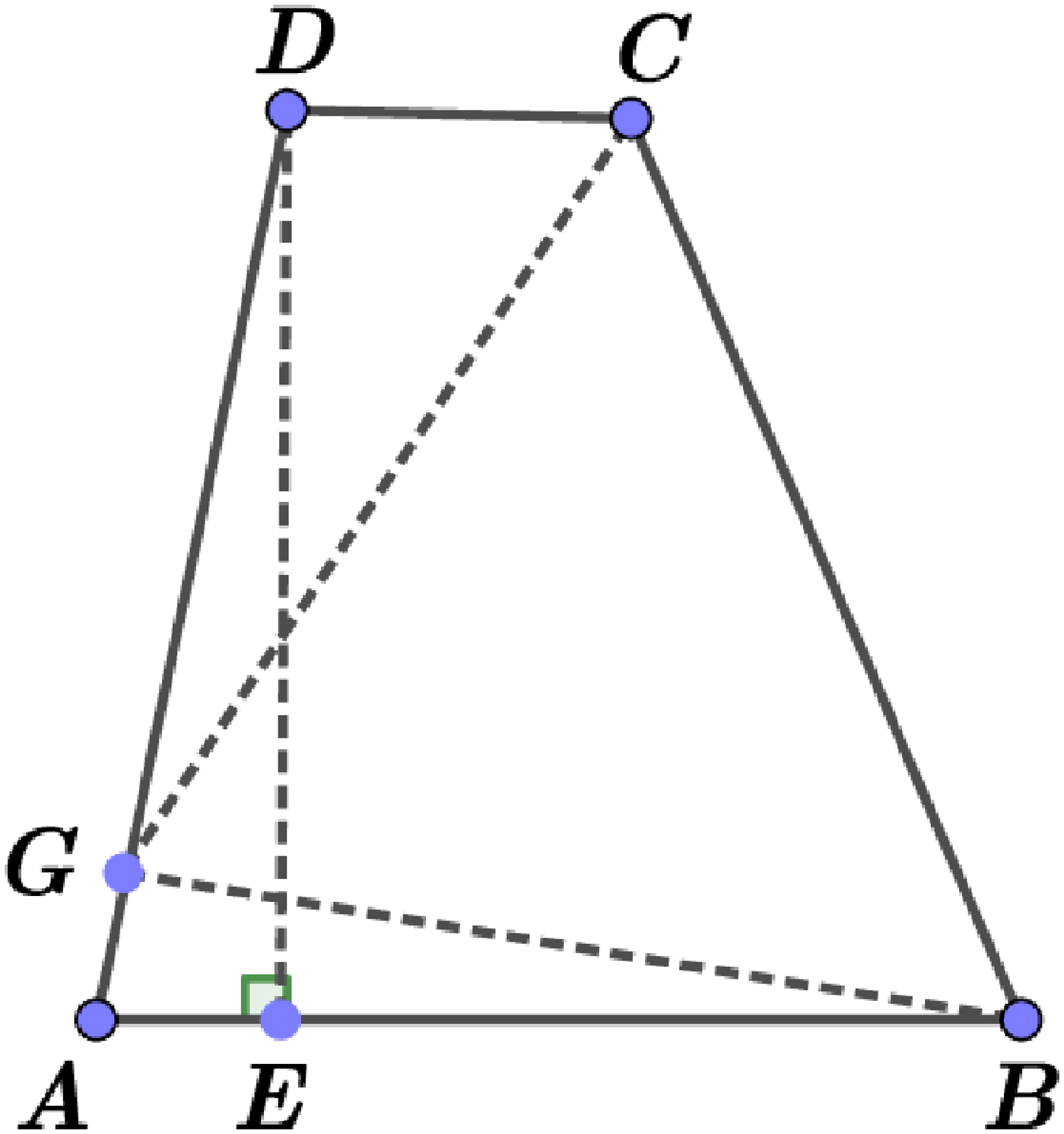}} a) \\
\end{minipage}
\quad\!\!\!\!\!
\begin{minipage}[h]{0.33\linewidth}
\center{\includegraphics[width=1\linewidth]{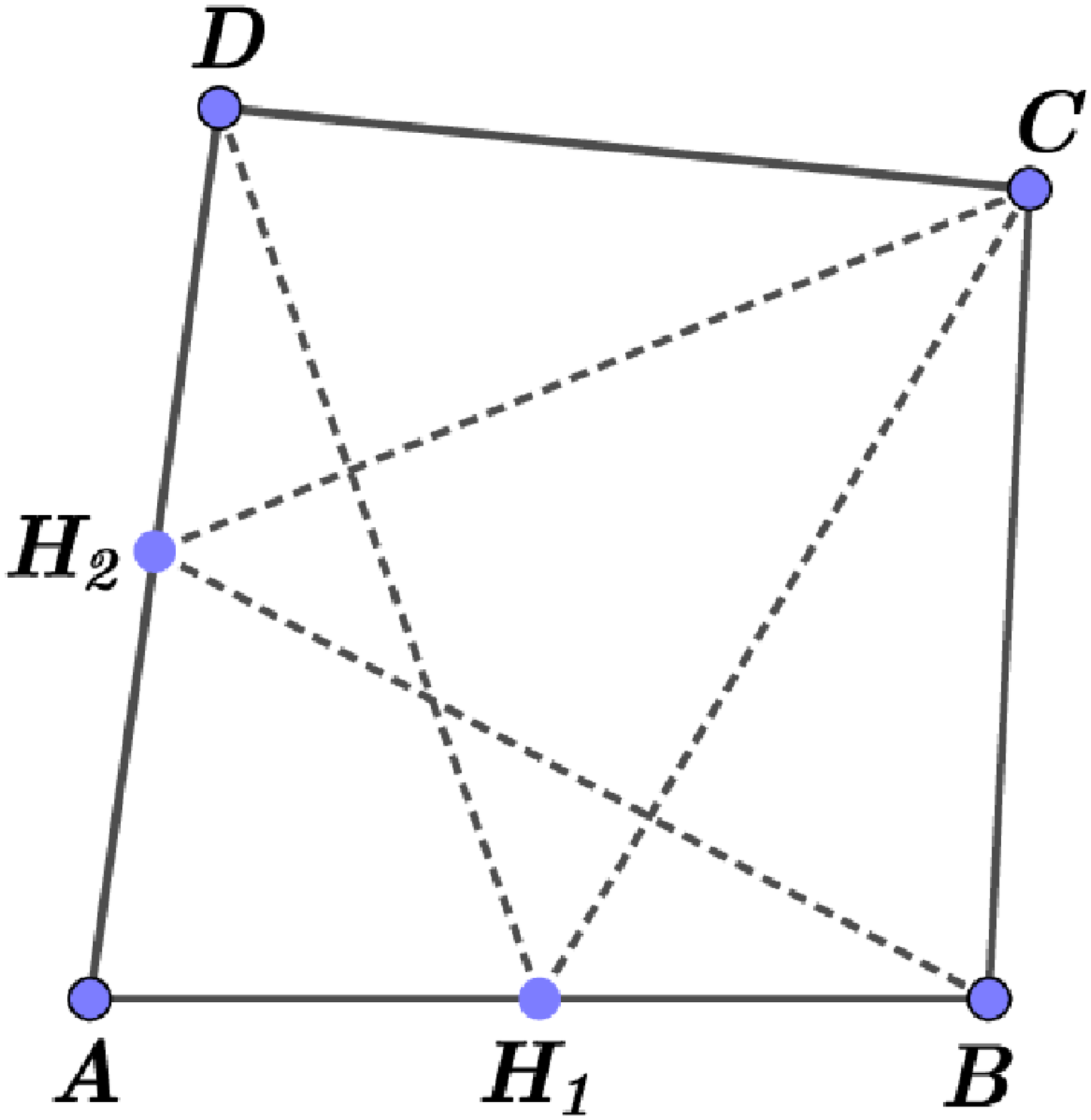}} \\b)
\end{minipage}
\quad
\begin{minipage}[h]{0.28\linewidth}
\center{\includegraphics[width=1\linewidth]{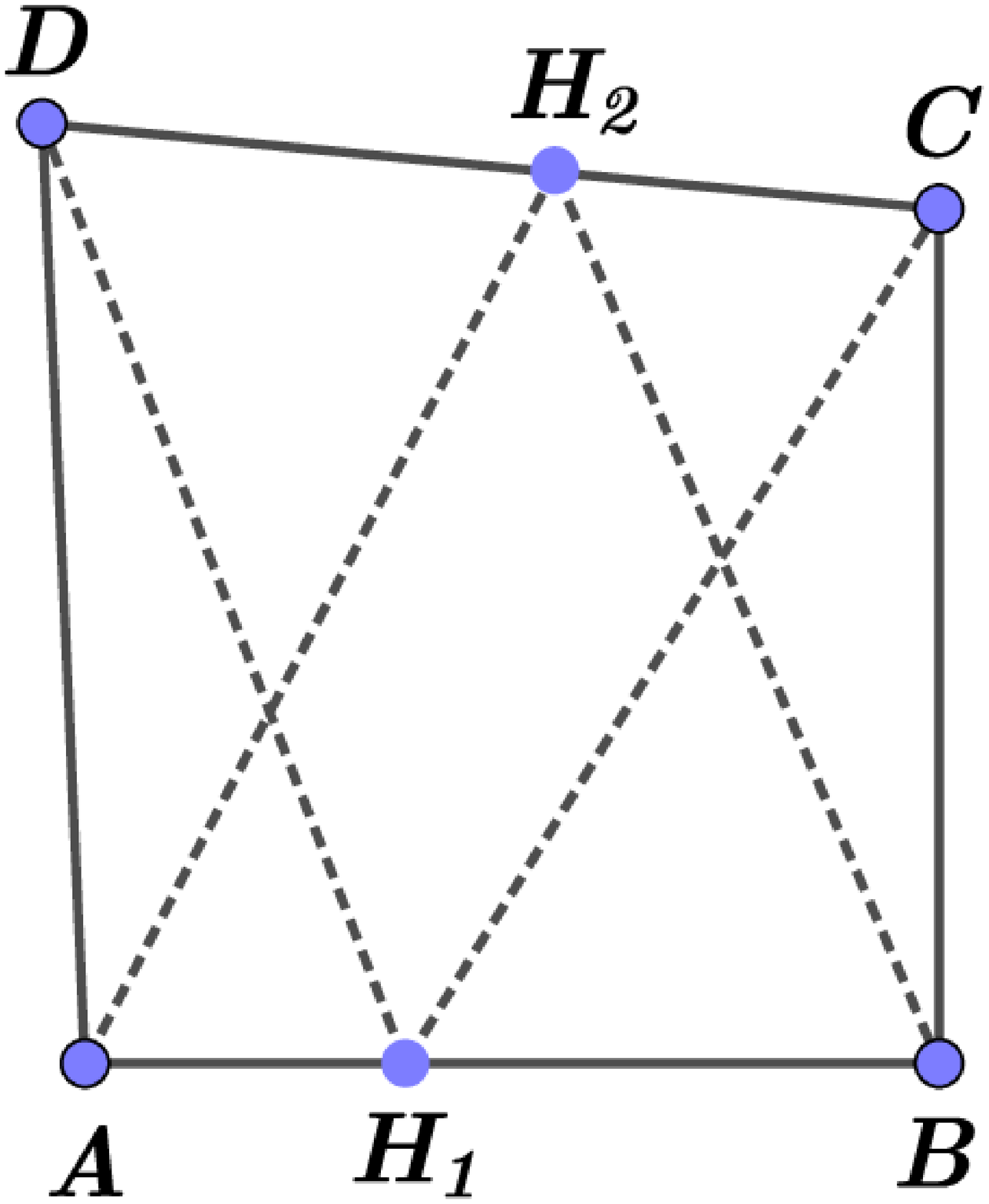}} c) \\
\end{minipage}
\vfill
\begin{minipage}[h]{0.34\linewidth}
\center{\includegraphics[width=1\linewidth]{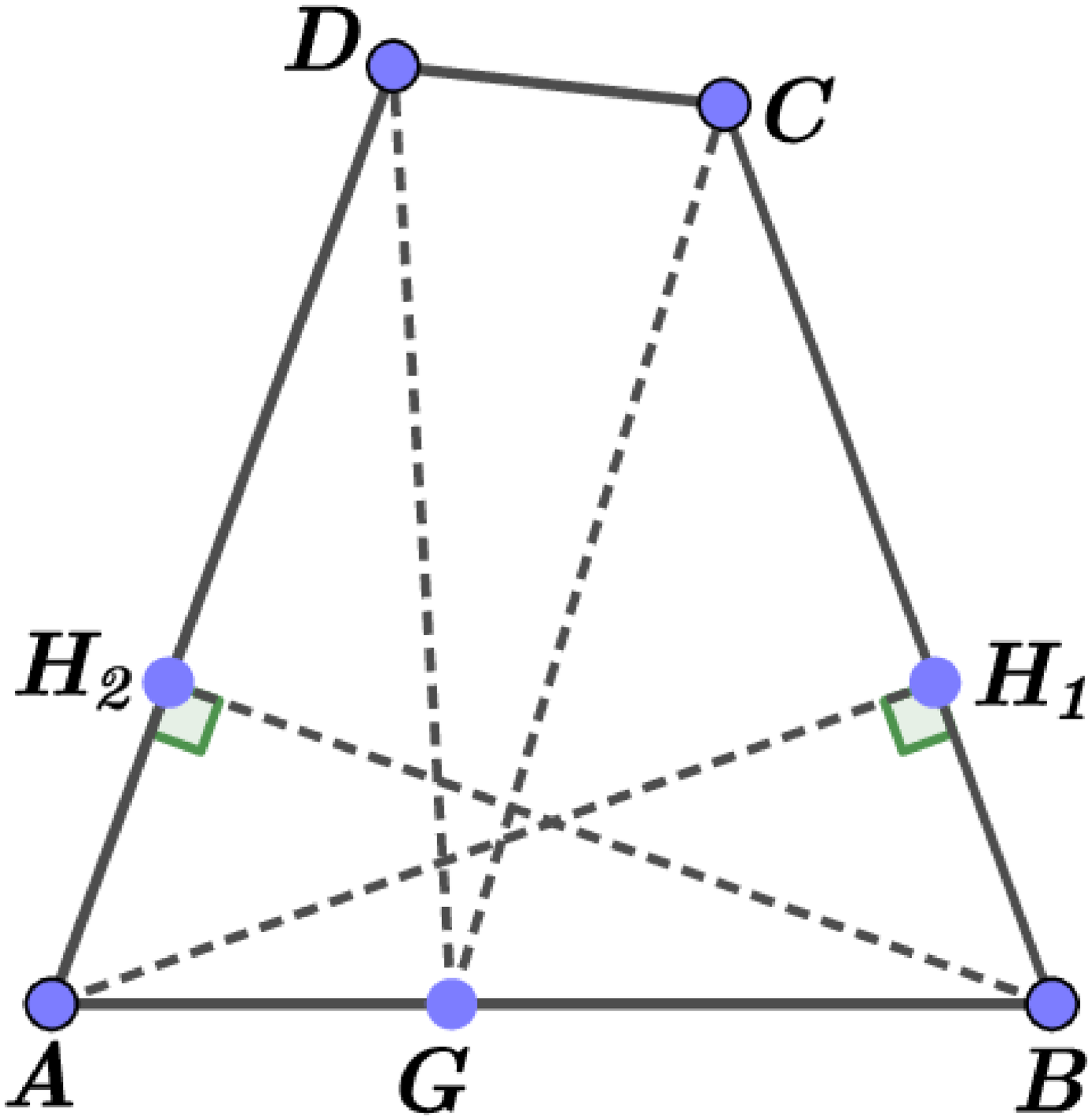}} d) \\
\end{minipage}
\quad
\begin{minipage}[h]{0.35\linewidth}
\center{\includegraphics[width=1\linewidth]{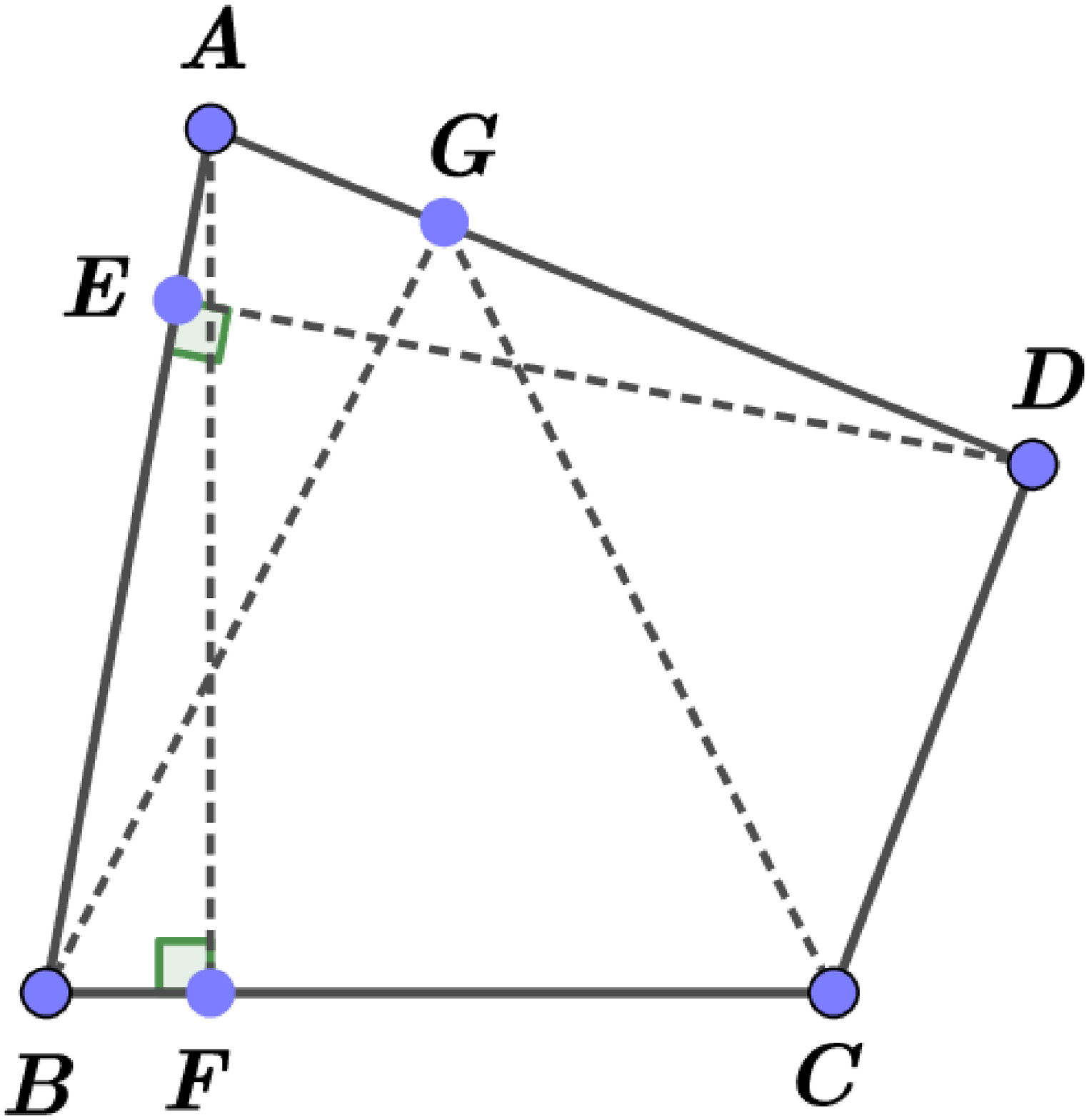}} e) \\
\end{minipage}
\vfill
\begin{minipage}[h]{0.35\linewidth}
\center{\includegraphics[width=1\linewidth]{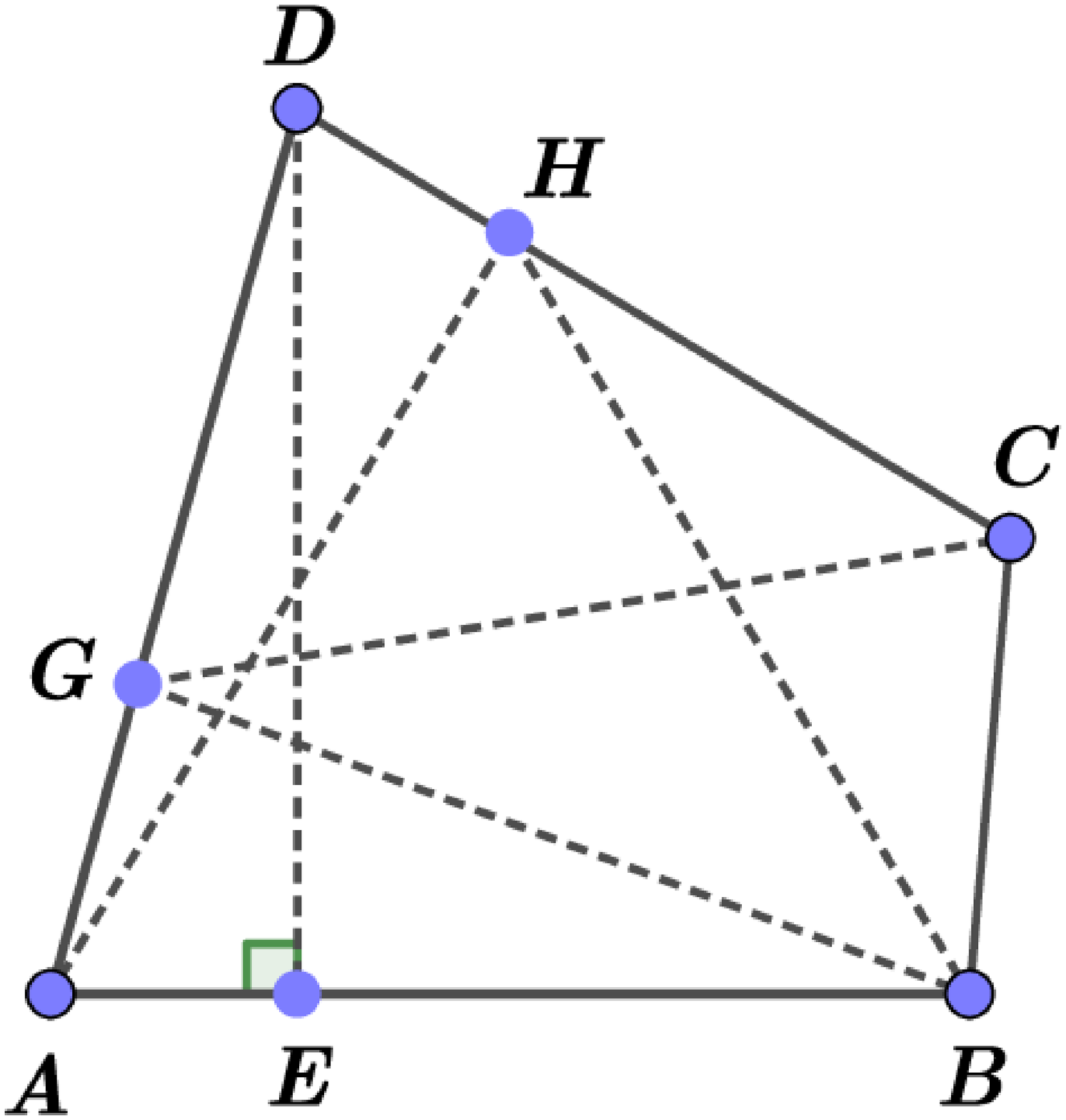}} f) \\
\end{minipage}
\quad
\begin{minipage}[h]{0.35\linewidth}
\center{\includegraphics[width=1\linewidth]{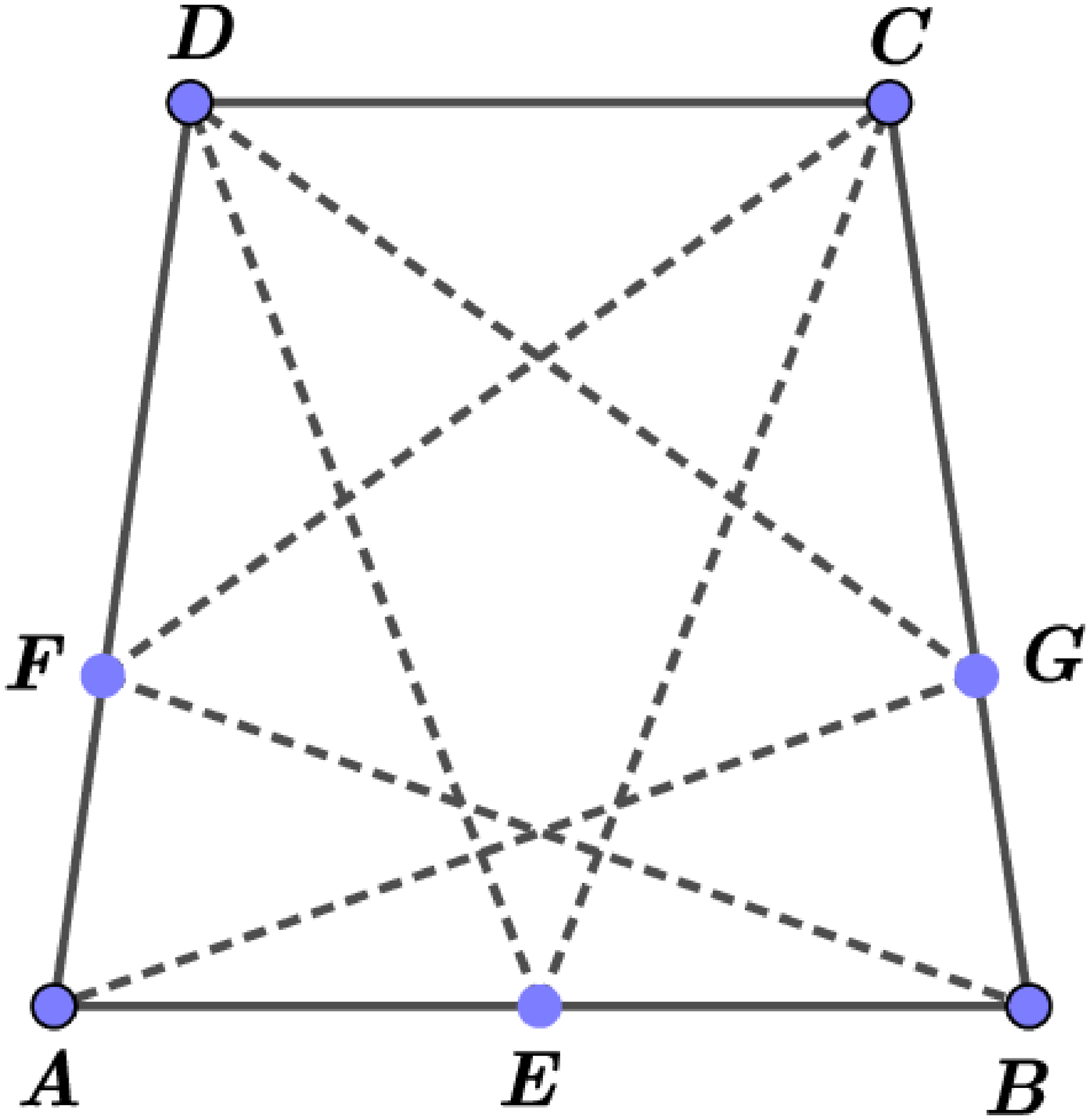}} g) \\
\end{minipage}
\caption{a) Case 2.1, b)
Case 2.2, c) Case 2.3, d) Case 3.1, e) Case 3.2,  f) Case 3.3, g) Case 3.4.}
\label{ris}
\end{figure}

Denote by $\|G_{\min}\|$  {\it the number of points in the set} $G_{\min}$, or, in other words, the number of self Chebyshev centers for $\Gamma$ (see \eqref{eq_fuct3}).
\label{mumchebcenters}
For any $i \in \mathbb{Z}_4$, the set $G_{\min} \cap [A_i, A_{i+1}]$ is either empty or has exactly one point.

We denote the elements of $G_{\min}$ by capital roman letter (sometimes with indices).
The possible subcases depends not only of cardinality of $G_{\min}$ but also on  how many elements of the set $G_{\min}$
satisfy the conditions $\|{\mathcal F}(\,\cdot\,)\| \geq 2$ and $\|{\mathcal F}(\,\cdot\,)\|=1$ (see \eqref{eq.fath1})
and the relative position of these points.
\smallskip

In what follows, a self Chebyshev center $x \in \Gamma$ is called {\it simple} if $\|{\mathcal F}(x)\|=1$ and {\it non-simple}
if $\|{\mathcal F}(x)\|\geq 2$. \label{simple_nonsimple}

On the other hand, $G_{\min}\neq \emptyset$ and, therefore, $1\leq \|G_{\min}\| \leq 4$. Moreover, $G_{\min}$ does not contain
any vertex of~$\Gamma$ by Corollary \ref{cor.simple1}.
We distinguish four cases according to the value of $\|G_{\min}\|$.
{\bf Case $k$} is the case when $\|G_{\min}\|=k$, $k=1,2,3,4$.

\section{Case 1}\label{2.5}

\begin{prop}\label{pr.case1}
Case 1 does not provide extremal quadrilaterals.
\end{prop}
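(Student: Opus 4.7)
The plan is to derive a direct contradiction between Propositions \ref{pr_extr2} and \ref{pr_extr1} when $G_{\min}$ consists of a single point. Suppose, for contradiction, that $P$ is an extremal quadrilateral with $G_{\min} = \{A\}$. By Corollary \ref{cor.simple1}, the unique self Chebyshev center $A$ is not a vertex of $P$, so it lies in the relative interior of some side, which after relabelling the vertices cyclically I may take to be $[A_1, A_2]$.

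First I would apply Proposition \ref{pr_extr2} to $A \in [A_1, A_2]$: this yields ${\mathcal F}(A) \subset \{A_3, A_4\}$, and in particular $A_2 \notin {\mathcal F}(A)$, so $d(A, A_2) < \mu(A) = \delta(\Gamma)$. Next I would invoke Proposition \ref{pr_extr1} for the adjacent side $[A_2, A_3]$. Since $G_{\min} = \{A\}$ and $A \notin [A_2, A_3]$, that side is disjoint from $G_{\min}$, so Proposition \ref{pr_extr1} produces a point $z_1 \in G_{\min}$ with $d(z_1, A_2) = \delta(\Gamma)$. The only element of $G_{\min}$ being $A$, this forces $d(A, A_2) = \delta(\Gamma)$, i.e. $A_2 \in {\mathcal F}(A)$, contradicting the conclusion of the previous step. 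Hence no extremal quadrilateral can satisfy $\|G_{\min}\|=1$.

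I do not anticipate any substantive obstacle; the whole argument is a short combinatorial pigeonhole observation once the machinery developed in Sections \ref{sect.1}--\ref{sect.2} is in hand. The only sanity check worth making is that $A$ does lie in the relative interior of a single side (guaranteed by Corollary \ref{cor.simple1}), so that Proposition \ref{pr_extr2} applies unambiguously and identifies both neighbouring vertices $A_1, A_2$ as forbidden members of ${\mathcal F}(A)$; then at least one of the two sides adjacent to $[A_1, A_2]$ provides the vertex needed to trigger the contradiction via Proposition \ref{pr_extr1}.
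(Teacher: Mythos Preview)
Your argument is correct and slightly slicker than the paper's. Both proofs begin by assuming $P$ extremal with $G_{\min}=\{A\}$ interior to $[A_1,A_2]$ (via Corollary~\ref{cor.simple1}), and both invoke Proposition~\ref{pr_extr1} on an empty side. The difference is in how the contradiction is closed. The paper applies Proposition~\ref{pr_extr1} to all three remaining sides and concludes that \emph{every} vertex lies in ${\mathcal F}(A)$; in particular $d(A_1,A_2)=d(A,A_1)+d(A,A_2)=2\delta(\Gamma)$, and then the triangle inequality on the other three sides gives $L(\Gamma)\ge 4\delta(\Gamma)$, which exceeds the magic-kite ratio $\tfrac{4}{3}\sqrt{3+2\sqrt{3}}$. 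You instead pair Proposition~\ref{pr_extr1} with Proposition~\ref{pr_extr2}: the latter forbids $A_2\in{\mathcal F}(A)$, while the former forces it, yielding an immediate logical contradiction with no perimeter estimate needed. Your route is shorter and avoids any numerical comparison; the paper's route has the minor advantage of producing an explicit quantitative bound $L(\Gamma)\ge 4\delta(\Gamma)$ along the way, though that bound is not used elsewhere.
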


\begin{proof}
Suppose that $\|G_{\min}\|=1$ and $G_{\min}=\{B\}$, where
$B$ is an interior point of the segment (say) $[A_1,A_2]$.
Hence, $[A_2,A_3]\cap G_{\min}=[A_3,A_4]\cap G_{\min}=[A_4,A_1]\cap G_{\min}=\emptyset$ and
$A_1, A_2, A_3, A_4 \in F(B)$ by Proposition \ref{pr_extr1}. In particular, $d(A_1,A_2)=d(A_1,B)+d(A_2,B)=2 \delta(\Gamma)$.
By the triangle inequality we have also that $d(A_2,A_3)+d(A_3,A_4)+d(A_4, A_1) \geq d(A_1,A_2)=2 \delta(\Gamma)$.
Therefore, $L(\Gamma)\geq 4\delta(\Gamma)$.
Obviously, $4 > \frac{4}{3}\sqrt{2\sqrt{3}+3}= 3.389946...$.
Therefore, Case 1 does not provide an extremal quadrilateral.
\end{proof}

\begin{figure}[t]
\center{\includegraphics[width=0.45\textwidth]{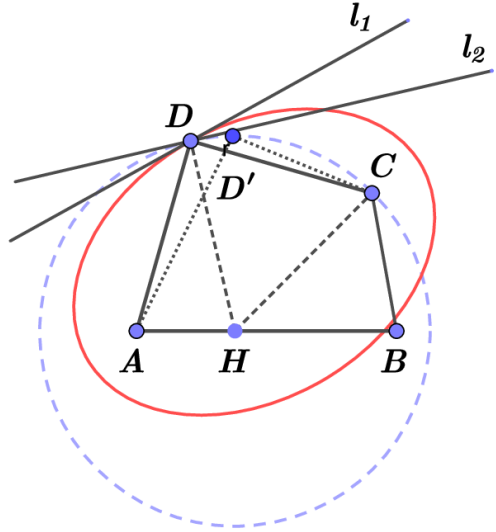}}\\
\caption{The ellipse and the circle for the quadrilateral $ABCD$.}
\label{Ellipse}
\end{figure}

\begin{remark}\label{rem4}
Recall that we use the term ``extremal'' in this paper only for polygons with {\bf minimal} perimeter among all
polygons with a given self Chebyshev radius of the boundary. It should be noted that the polygon with {\bf maximal} perimeter among all
polygons with a given self Chebyshev radius of the boundary was determined in \cite[Theorem 4]{BMNN2021}.
The boundary of such a polygon has only one self Chebyshev center. For $n=4$, such a polygon is
a half of a regular $6$-gon.
\end{remark}

In the following three sections, we will consider Cases 2, 3, and 4 in turn.
For all this cases, it is reasonable to consider separately some subcases.
In Fig.~\ref{ris}, we show all the essential subcases of Cases 2 and 3.
\smallskip

The following auxiliary result is very useful.

\begin{lemma} \label{ellipse}
Let $ABCD$ be a quadrilateral with $\delta(\bd(ABCD))=1$ for the self Chebyshev radius of its boundary.
Suppose that there is a point $H\in G_{\min} \cap [A,B]$ such that ${\mathcal F}(H)=\{C,D\}$ and $\angle AHD <\pi/2$.
If, moreover, $[A,D]\cap G_{\min} =\emptyset$ {\rm(}respectively, $G_{\min} \subset [A,B] \cup [B,C]${\rm)} and $D \not \in {\mathcal F}(H')$ for any
$H'\in G_{\min}\setminus \{H\}$,
then either $\angle HDC \leq \angle HDA$ {\rm(}respectively, $\angle HDC = \angle HDA${\rm)}
or there exists a point $D'$ arbitrarily close to the point~$D$, such that
$\delta(\bd(ABCD')) = 1$ and  $L(\bd(ABCD'))<L(\bd(ABCD))$.
\end{lemma}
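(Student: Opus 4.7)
The plan is a first-order variational argument. Perturb $D$ to a point $D'$ on the circle of radius $1$ centred at $H$ (so $d(H, D') = 1$ persists), writing $D' = D + \varepsilon \vec{v} + O(\varepsilon^2)$ with $\vec{v}$ a unit tangent to this circle at $D$ (so $\vec{v} \perp \overrightarrow{HD}$). Only the two sides $CD$ and $DA$ depend on $D$, so differentiation gives
\[
\frac{d}{d\varepsilon}\bigl(d(C,D') + d(D',A)\bigr)\Big|_{\varepsilon=0} \;=\; \vec{v}\cdot\vec{n}, \qquad \vec{n} := \frac{D-C}{d(D,C)} + \frac{D-A}{d(D,A)}.
\]
Here $\vec{n}$ is the external bisector direction of $\angle ADC$ at $D$. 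The derivative vanishes for both admissible $\vec{v}$ iff $\vec{n} \parallel \overrightarrow{HD}$, iff $H$ lies on the internal bisector of $\angle ADC$ from $D$, iff $\angle HDA = \angle HDC$. If these angles differ, one of the two tangent directions yields a strict perimeter decrease.

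Next I would check that this perturbation preserves $\delta(\bd(ABCD')) = 1$. Constraining $d(H, D') = 1$ keeps $D' \in {\mathcal F}(H)$, which by Proposition~\ref{locmin_mu1} is needed for $H$ to remain a local minimum of the new $\mu$; if $D$ were moved strictly inside the disk, ${\mathcal F}(H)$ would shrink to $\{C\}$ and $H$ would cease to be a local minimum, forcing $\delta < 1$. Away from $G_{\min}$, strict inequality $\mu > 1$ persists by continuity for $\varepsilon$ small. At each $H' \in G_{\min}$ on the unchanged edges $[A,B] \cup [B,C]$, the hypothesis $D \not\in {\mathcal F}(H')$ gives a vertex $V \ne D$ with $d(H',V) = 1$, unaffected by moving $D$, so $\mu(H') \geq 1$ persists. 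The delicate case is $H' \in G_{\min} \cap [C,D]$ (possible only in the non-parenthetical setting): such $H'$ is carried to $H'_\varepsilon = (1-t)C + tD'$, and $d(H'_\varepsilon, V) \geq 1$ for a fixed $V \in {\mathcal F}(H') \setminus \{D\}$ becomes, to first order, a linear sign condition on $\vec{v}$. The assumption $\angle AHD < \pi/2$ regulates the local geometry so that $ABCD'$ remains convex and the relative position of $H$, $A$, $D$ on the circle behaves as expected.

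To conclude: in the parenthetical case $G_{\min} \subset [A,B] \cup [B,C]$ no sign constraint from $[C,D]$ or $[A,D]$ appears, both tangent directions are admissible, and the failure of the perturbation alternative forces $\vec{v} \cdot \vec{n} = 0$ for both choices, hence $\angle HDA = \angle HDC$. In the first case the sign constraint from $G_{\min} \cap [C,D]$ may eliminate one tangent direction; a direct geometric analysis (using $\angle AHD < \pi/2$ and the signs of the derivatives $t\,\vec{v} \cdot (H' - V)$) shows that the surviving direction coincides with the perimeter-decreasing one exactly when $\angle HDC > \angle HDA$. Hence if no admissible decreasing perturbation exists one must have $\angle HDC \leq \angle HDA$. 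The main obstacle is precisely this last sign-matching step: verifying rigorously that in the non-parenthetical case the admissible tangent direction (the one not blocked by any $d(H'_\varepsilon, V) \geq 1$) and the perimeter-decreasing direction (the one with $\vec{v} \cdot \vec{n} < 0$) agree exactly when the angle inequality is violated, which is the bookkeeping responsible for the one-sided (vs.\ two-sided, parenthetical) conclusion.
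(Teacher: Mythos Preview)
Your approach is essentially the same as the paper's, recast analytically. The paper moves $D$ along the circle $\mathcal{S}$ of radius $1$ centred at $H$ and compares the tangent to $\mathcal{S}$ at $D$ (which is perpendicular to $\overrightarrow{HD}$) with the tangent at $D$ to the ellipse with foci $A$ and $C$; the optical property of the ellipse says this second tangent is perpendicular to your external-bisector vector $\vec{n}$, so the paper's tangent-angle comparison is exactly your sign analysis of $\vec{v}\cdot\vec{n}$. When $\angle HDC>\angle HDA$ the paper concludes that the arc of $\mathcal{S}$ on the $C$-side of $D$ enters the interior of the ellipse, which is your statement that the perimeter derivative is negative in that tangent direction.

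On the preservation of $\delta=1$: in the non-parenthetical case the paper, like you, selects only the arc toward $C$ and then simply asserts that ``it is easy to see that the self Chebyshev radius of the boundary of the quadrilateral $ABCD'$ is $1$'' for $D'$ close to $D$, without working through the possible constraint from a centre $H'\in G_{\min}\cap[C,D]$ that you isolate as the main obstacle. Your proposal is therefore at the same level of rigor as the paper's own proof on this point, and more explicit about where the remaining verification lies. In the parenthetical case both arguments are complete: both tangent directions are admissible, forcing $\vec{v}\cdot\vec{n}=0$ for both choices of $\vec{v}$ and hence $\angle HDA=\angle HDC$.
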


\begin{proof} At first, we consider the case $[A,D]\cap G_{\min} =\emptyset$. Let us suppose that $\angle HDC > \angle HDA$.
We consider an ellipse $\mathcal{E}$ with the foci at the points $A$ and $C$,
that contains the point $D$ (see Fig. \ref{Ellipse}).
Let $\mathcal{S}$ be a circle of radius $1$ centered at the point~$H$.
Recall that $C, D\in \mathcal{S}$.

Let $l_1$ be the tangent line to the ellipse $\mathcal{E}$ at the point $D$ and
$l_2$ the tangent line to the circle $\mathcal{S}$ at the point $D$.
Note that $l_2\bot DH$.
Since $\angle HDC > \angle HDA$,
then the angle between the tangent $l_2$ and the segment $DC$ is less than the angle between the tangent $l_2$ and the segment $DA$.

By a well known property of the ellipse, the angle between the tangent $l_1$ and the line segment $[D,C]$ is equal to the angle between the tangent
$l_1$ and the the segment $[D,A]$.
Therefore, the arc of a circle $\mathcal{S}$ between the points $D$ and $C$ is such that any its point $D'\neq D$, that is sufficiently close to the point $D$,
lies inside the ellipse $\mathcal{E}$.
Therefore, we have $d(A,D')+d(D',C)<d(A,D)+d(D,C)$ for such a point $D'$ according to another one well known property of the ellipse.
Hence, $L(\bd(ABCD'))<L(\bd(ABCD))$. It is easy to see that the self Chebyshev radius of the boundary of the quadrilateral $ABCD'$
is $1$ if $D'$ is sufficiently close to $D$.

In the case $G_{\min} \subset [A,B] \cup [B,C]$ we can repeat the above arguments with one addition. In this case, we can take also
the point $D'$ outside the arc between $D$ and $C$
(but sufficiently close to $D$). Since $[C,D]\cap G_{\min} =\emptyset$, then such a variation of the quadrilateral $ABCD$ does not decrease the value
of the self Chebyshev radius of the boundary. On the other hand, if $\angle HDC < \angle HDA$, such a variation decreases the perimeter. Therefore,
either $\angle HDC = \angle HDA$ or there exists a point $D'$ arbitrarily close to the point~$D$, such that
$\delta(\bd(ABCD')) = 1$ and  $L(\bd(ABCD'))<L(\bd(ABCD))$.
\end{proof}
\smallskip

We obviously get the following important corollary.

\begin{figure}[t]
\center{\includegraphics[width=0.33\textwidth, trim=0mm 1mm 0mm 0mm, clip]{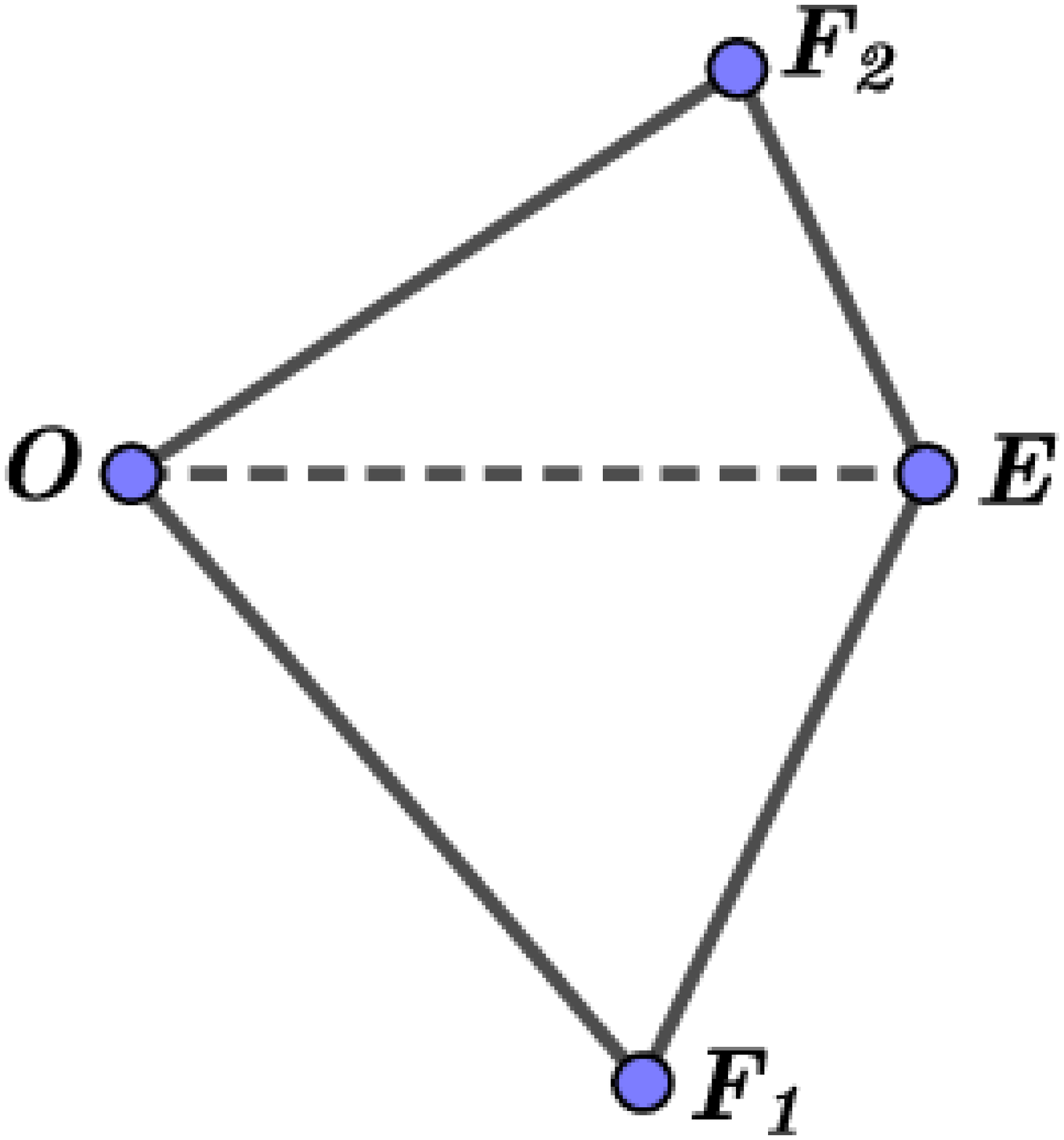}}\\
\caption{Lemma \ref{stor}.}
\label{Stor}
\end{figure}

\begin{corollary}\label{cor.angle2}
If the quadrilateral $ABCD$ is extremal in the assumptions of Lemma \ref{ellipse}, then $\angle CDH \leq \angle ADH$ {\rm(}respectively, $\angle CDH = \angle ADH${\rm)}.
\end{corollary}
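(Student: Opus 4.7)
The plan is to obtain the corollary as an immediate consequence of Lemma \ref{ellipse} together with the definition of an extremal polygon. Since extremality is defined as minimization of the ratio $L(\Gamma)/\delta(\Gamma)$, and the hypotheses of Lemma \ref{ellipse} already include the normalization $\delta(\bd(ABCD))=1$, extremality of $ABCD$ under these hypotheses is equivalent to saying that no quadrilateral $ABCD'$ with $\delta(\bd(ABCD'))=1$ can have strictly smaller perimeter than $ABCD$.

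First, I would apply Lemma \ref{ellipse} in the appropriate one of its two cases. In each case, the lemma presents a dichotomy: either the desired angle relation ($\angle HDC \leq \angle HDA$, respectively $\angle HDC = \angle HDA$) holds, or there exists a point $D'$ arbitrarily close to $D$ such that $\delta(\bd(ABCD'))=1$ and $L(\bd(ABCD'))<L(\bd(ABCD))$. Next, I would rule out the second alternative: the existence of such $D'$ would contradict extremality, because $ABCD'$ would achieve a strictly smaller ratio $L/\delta$ than $ABCD$ (both have $\delta=1$, but $L$ is strictly smaller). Consequently, the first alternative in the dichotomy must hold, which gives exactly the stated conclusion.

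The main obstacle has in fact already been handled inside Lemma \ref{ellipse} itself — namely, the focal-chord argument using the confocal ellipse with foci $A,C$ and the circle of radius $1$ centered at $H$, together with the verification that the perturbation $D \mapsto D'$ does not disturb the self Chebyshev radius (because $[A,D]\cap G_{\min}=\emptyset$, respectively $G_{\min}\subset[A,B]\cup[B,C]$, and $D\notin \mathcal{F}(H')$ for the other self Chebyshev centers $H'$). Once that lemma is in hand, the corollary requires no further geometric input beyond citing the definition of extremality on p.~\pageref{defextrem}, so the proposed proof will be only a couple of lines of exposition.
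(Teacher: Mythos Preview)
Your proposal is correct and matches the paper's approach: the paper simply states that the corollary ``obviously'' follows from Lemma~\ref{ellipse}, and your argument spells out precisely the intended reasoning, namely that the second alternative in the lemma's dichotomy would contradict the minimality of $L(\Gamma)/\delta(\Gamma)$ at an extremal quadrilateral.
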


The following lemma is important for further considerations.

\begin{lemma}\label{stor}
Let $OF_1EF_2$ be a quadrilateral such that $d(O,E)=d(O,F_1)=1$, $d(O,F_2)\leq1$,
$\angle OF_1E=\angle OEF_1=\angle OEF_2=:\beta\geq\pi/4$ and $\beta<\pi/2$.
We consider an ellipse $\mathcal{E}$ with the foci at the points $F_1$ and $F_2$,
that contains the point $E$.
Let $\mathcal{S}$ be a circle of radius $1$ centered at the point $O$.
If $d(E,F_2)>\frac{1}{3}d(E,F_1)$
then all points $E'\neq E$ on the circle $\mathcal{S}$ sufficiently close to the point $E$ lie inside the ellipse $\mathcal{E}$.
\end{lemma}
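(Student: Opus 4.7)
The idea is to reduce the lemma to a local second-order comparison of the curves $\mathcal{S}$ and $\mathcal{E}$ at their common point $E$. The plan is to verify that (i) they share the same tangent line at $E$, (ii) they bend into the same half-plane at $E$, and (iii) $\mathcal{E}$ has strictly larger radius of curvature at $E$ than $\mathcal{S}$. Items (i)--(iii) together force the arc of $\mathcal{S}$ through $E$ to lie on both sides of $E$ strictly inside $\mathcal{E}$, which is exactly what the lemma claims.

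For (i) and (ii), note that the hypothesis $\angle OEF_1 = \angle OEF_2 = \beta$ expresses that $EO$ is the internal bisector of $\angle F_1EF_2 = 2\beta$. By the reflection property of the ellipse, the normal to $\mathcal{E}$ at $E$ also bisects $\angle F_1EF_2$; since the normal to $\mathcal{S}$ at $E$ is the radius $EO$, the two tangent lines at $E$ coincide. Placing $E$ at the origin with $O=(0,1)$, the common tangent becomes the $x$-axis; the foci $F_1,F_2$, and hence the center of $\mathcal{E}$, lie in the upper half-plane since $\beta<\pi/2$, so both curves bend upward at $E$.

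For (iii), the isosceles triangle $OF_1E$ with $|OF_1|=|OE|=1$ and base angles $\beta$ immediately gives $r_1:=d(E,F_1)=2\cos\beta$. Setting $r_2:=d(E,F_2)$, the law of cosines in $\triangle F_1EF_2$ with apex angle $2\beta$ yields $b=\sqrt{r_1r_2}\,\cos\beta$ for the semi-minor axis of $\mathcal{E}$; combined with $r_1+r_2=2a$ and the identity $a^2\sin^2 t + b^2\cos^2 t = r_1 r_2$ (which holds along the standard parametrization of the ellipse), one obtains
\[
\rho_{\mathcal{E}} \;=\; \frac{(r_1r_2)^{3/2}}{ab} \;=\; \frac{2 r_1 r_2}{(r_1+r_2)\cos\beta}.
\]
Since $\rho_{\mathcal{S}}=1$, the inequality $\rho_{\mathcal{E}}>\rho_{\mathcal{S}}$ becomes $2r_1 r_2>(r_1+r_2)\cos\beta$, and substituting $r_1=2\cos\beta$ simplifies it to $r_2>\tfrac{1}{3}r_1$, which is exactly the hypothesis $d(E,F_2)>\tfrac{1}{3}d(E,F_1)$. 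Locally both curves are graphs $y\approx x^2/(2\rho)$ above the tangent line, so $\rho_{\mathcal{E}}>1=\rho_{\mathcal{S}}$ places the circle strictly above the ellipse in a punctured neighborhood of $E$, i.e., strictly inside $\mathcal{E}$.

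The main obstacle I anticipate is the clean derivation of the focal-radius curvature formula for $\mathcal{E}$, especially the auxiliary identity $b^2=r_1r_2\cos^2\beta$ and its combination with $r_1r_2=a^2\sin^2 t+b^2\cos^2 t$. If the classical ellipse formulas become unwieldy, a purely computational alternative is to work in affine coordinates in which $\mathcal{S}$ and $\mathcal{E}$ are explicit conics and to compare their Taylor expansions about $E$ to second order; this bypasses ellipse-specific machinery at the cost of a longer but elementary calculation.
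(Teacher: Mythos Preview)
Your argument is correct and takes a genuinely different route from the paper's. The paper proceeds by brute-force coordinates: with $O=(0,0)$, $E=(1,0)$, $\alpha:=\pi-2\beta$, it parametrizes $\mathcal S$ as $(\cos t,\sin t)$, expands the sum-of-focal-distances function $f(\cos t,\sin t)$, factors out $2(1-\cos t)$, and reads off that the leading coefficient of the remaining factor is $-(1+l)(3l-1)(1-\cos\alpha)$ with $l=r_2/r_1$, which is negative precisely when $l>1/3$. Your approach replaces this computation by classical ellipse geometry: the reflection property gives the tangent match, the pedal identity $p_1p_2=b^2$ (with $p_i=r_i\cos\beta$) yields $b^2=r_1r_2\cos^2\beta$, and together with $r_1r_2=a^2\sin^2 t+b^2\cos^2 t$ and $2a=r_1+r_2$ this produces the focal-radius curvature formula $\rho_{\mathcal E}=2r_1r_2/((r_1+r_2)\cos\beta)$. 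Plugging in $r_1=2\cos\beta$ reduces $\rho_{\mathcal E}>1$ exactly to $r_2>\tfrac13 r_1$, making the appearance of the $1/3$ threshold completely transparent. The trade-off is that the paper's argument is self-contained and needs no prior ellipse identities, whereas yours is shorter and more conceptual but leans on the pedal and curvature formulas; both are legitimate second-order comparisons at $E$, and neither actually uses the side hypotheses $\beta\ge\pi/4$ or $d(O,F_2)\le 1$.
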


\begin{proof}
In a suitable Cartesian coordinate system, the points we need have the following coordinates:
$$
O=(0,0), \quad E=(1,0), \quad F_1=\bigl(\cos(\alpha),\sin(\alpha)\bigr), \quad F_2=\bigl(l\cos(\alpha)+1-l,-l\sin(\alpha)\bigr),
$$
where $\alpha:=\pi-2\beta$ (see Fig. \ref{Stor}).
Let us consider the equation of the ellipse $\mathcal{E}$:
$$
f(x,y):=d\bigl((x,y),F_1\bigr)+d\bigl((x,y),F_2\bigr)-d(E,F_1)-d(E,F_2)=0.
$$
By direct computations, we get
$$
f\bigl(\cos(t),\sin(t)\bigr)=2\bigl(1-\cos(t)\bigr)\cdot g(t),
$$
where $g(t)=-(1+l)(3l-1)\bigl(1-\cos(\alpha)\bigr)-(1-l^2)\sin(\alpha)t+o(t)$ as $t\rightarrow 0$.
Therefore, $l>1/3$ (that is equivalent to $d(E,F_2)>\frac{1}{3}\,d(E,F_1)$) implies $g(t)<0$ for $t$ sufficiently close to $0$.
\end{proof}

\section{Case 2}\label{sect.3}

Suppose that $\|G_{\min}\|=2$.
Our general assumptions are as follows.
Let $ABCD$ be a quadrilateral with $\|G_{\min}\|=2$ and with $\delta(\bd(ABCD))=1$ for the self Chebyshev radius of its boundary.
We consider the set
$$
G_{\min}=\{x\in \bd(ABCD)~|~ \mu(x)=1\}=\{H_1,H_2\},
$$
where both points $H_1$ and $H_2$ are interior points of some sides of $ABCD$, see \eqref{eq_fuct3}.
In other words, $u\in G_{\min}$ if and only if  $\mathcal{F}(u)=\{y\in \bd(ABCD)~|~ d(u,y)=1\}$.

We consider several subcases according to Fig. \ref{ris}.

\subsection{Case 2.1}
Without loss of generality, we may suppose that $G$ is an interior point of $[A,D]$,
$E$ is an interior point of $[A,B]$ such that $DE \perp [A,B]$, and
$\delta(\bd(ABCD))=d(G,B)=d(G,C)=d(E,D)=1$ (see Fig. \ref{case2_1}).
Moreover, we have $\angle DGC \leq \pi/2$ and $\angle BGA \leq \pi/2$ by Proposition \ref{locmin_mu1}.

Let us suppose that  the quadrilateral $ABCD$ is extremal.
Then we have $\angle GCD=\angle GCB$ by Corollary \ref{cor.angle2}. Let us introduce the following notation:  $\beta:=\angle GBC$, $\varphi=\angle ADE$, $\psi=\angle GBA$.

If $\beta \leq \pi/4$, then $\angle CGB \geq \pi/2$ and we obtain the following simple estimate for the perimeter:
$L(\bd(CGB)) \geq 2+\sqrt{2}$. Hence, we have  $L(\bd(ABCD)) > 2+\sqrt{2}$, that is impossible for an extremal quadrilateral. Therefore, $\beta > \pi/4$.

\begin{figure}[t]
\center{\includegraphics[width=0.35\textwidth]{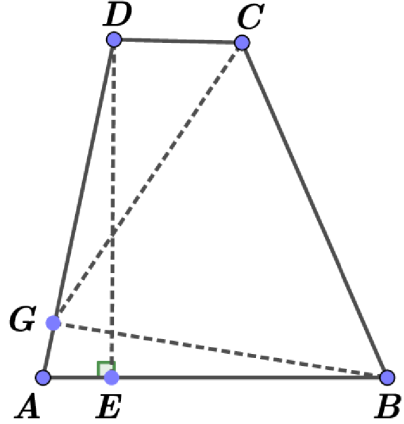}}\\
\caption{Case 2.1.}
\label{case2_1}
\end{figure}

Since $\angle AGB=\pi/2-(\psi-\varphi)\leq \pi/2$, then $\psi\geq \varphi\geq 0$.
It is easy to see that $\angle EDC=3\pi/2-3\beta-\psi$. Hence
$\angle GDC=\varphi+\angle EDC=3\pi/2-3\beta-(\psi-\varphi)\geq\pi/2$. It implies
$\beta\leq\pi/3-(\psi-\varphi)/3\leq\pi/3$.

It is important that  $d(C,D) \leq \frac{1}{3}d(B,C)$ (otherwise the quadrilateral $ABCD$ is not extremal by Lemma~\ref{stor}).
Since $d(B,C)=2\cos(\beta)$, we get $d(C,D)=2l\cos(\beta)$, where $l\in(0,1/3]$.

If $\beta\in[\pi/4,\pi/3]$ and $l\in[0,1/3]$, then $4\cos^2(\beta)\leq2$ and $l(1-l)\leq 2/9$.
These inequalities imply $4\cos^2(\beta)l(1-l)\leq 4/9$ and, consequently,
$$
d(G,D)=\sqrt{1-4\cos^2(\beta)\cdot l(1-l)}\geq\frac{\sqrt{5}}{3}
$$
On the other hand, $d(G,D)=\frac{1-\sin(\psi)}{\cos(\varphi)}$. Hence, $1-\sin(\psi)\geq\frac{\sqrt{5}}{3}\cos(\varphi)$ or, equivalently, $\frac{\sqrt{5}}{3}\cos(\varphi)+\sin(\psi)\leq1$.
We have $\sin(\varphi)\leq\sin(\psi)$ and $\cos(\varphi)\geq\cos(\psi)$ due to $0\leq\varphi\leq\psi$.
It implies that $\varphi$ and $\psi$ both satisfy the inequality $\frac{\sqrt{5}}{3}\cos(t)+\sin(t)\leq1$, which solutions for $t\in[0,\pi/2]$ are as follows:
$t\in[0,t_0]$, where $t_0=\arcsin(2/7)$.
Therefore, $0\leq\varphi\leq\psi\leq t_0<\pi/10$.

Put $\theta:=\psi-\varphi$. It is easy to see that $\angle DGC=\pi-\beta-\angle GDC=2\beta+\theta-\pi/2$.
The equations (the low of sines for $\triangle CDG$)
$$
\frac{d(G,D)}{\sin(\beta)}=\frac{1}{\sin(\frac{3\pi}{2}-3\beta-\theta)}=\frac{2l\cos(\beta)}{\sin(2\beta+\theta-\pi/2)}
$$
imply $2l\cos(\beta)=\frac{\cos(2\beta+\theta)}{\cos(3\beta+\theta)}$ or, equivalently,
$$
l=\frac{\cos(2\beta+\theta)}{2\cos(3\beta+\theta)\cos(\beta)}=:f(\beta,\theta).
$$
It is easy to check that
$$
\frac{\partial f}{\partial \theta}(\beta,\theta)=\frac{\sin(\beta)}{2\cos^2(3\beta+\theta)\cos(\beta)}>0,
$$
for $\beta\in [\pi/4,\pi/3]$ and $\theta\in[0,\pi/10]$.
Hence
$\frac{1}{3}\geq l\geq\frac{\cos(2\beta)}{2\cos(3\beta)\cos(\beta)}=\frac{\cos(2\beta)}{\cos(2\beta)+\cos(4\beta)}$.
It implies $\cos(4\beta)\leq 2\cos(2\beta)$. We get $2\eta^2-2\eta-1\leq0$ for $\eta:=\cos(2\beta)$.
The solution of this inequality is $\eta\in \left[\frac{1-\sqrt{3}}{2},0 \right]$.
Therefore, $\beta\in(\pi/4,\beta_0]$, where $\beta_0=\frac{\pi}{2}-\frac{1}{2}\arccos \left(\frac{1-\sqrt{3}}{2}\right)$.
\smallskip

{\bf Now we are going to show that the inequality $d^2(E,C)\leq1$ does not hold for $\beta\in(\pi/4,\beta_0]$}
(hence, $E \not\in G_{\min}$, and we get a contradiction).
If we suppose that $d^2(E,C)\leq1$, then $\angle EDC<\pi/2$ and, consequently,  $3\beta+\psi>\pi$.
In particular, $\psi>\pi-3\beta_0=\psi_0$ and $\sin(3\beta+\psi)<0$.
On the other hand,
$$
d(B,C)\cdot \sin(\angle EBC)+d(C,D)\cdot \cos(\angle EDC)=
2\cos(\beta)\bigl(\sin(\beta+\psi)-l\sin(3\beta+\psi)\bigr)=1
$$ i.e.
$$
-l\sin(3\beta+\psi)=\frac{1}{2\cos(\beta)}-\sin(\beta+\psi)=:F(\beta,\psi).
$$
Therefore, $\sin(3\beta+\psi)<0$ if and only if $F(\beta,\psi)>0$.
It is easy to check that
$$
 \frac{\partial F}{\partial \beta}(\beta,\psi)=\frac{\sin(\beta)}{2\cos^2(\beta)}-\cos(\beta+\psi)>
 \frac{\sin(\beta)}{2\cos^2(\beta)}-\cos(\beta)=\frac{\sin(\beta)-2\cos^3(\beta)}{2\cos^2(\beta)}>0
 $$
for $\beta\in(\pi/4,\beta_0]$.
Thus, the function $F(\beta,\psi)$ is strictly increasing with respect to the
variable $\beta$ for any $\psi\in(\psi_0,\pi/10]$.
Further, it is easy to see that
 $$
 \frac{\partial F}{\partial \psi}(\beta,\psi)=-\cos(\beta+\psi)<0
 $$
for $\psi\in(\psi_0,\pi/10]$.
Thus, the function $F(\beta,\psi)$ is strictly decreasing with respect to the
variable $\psi$ for any $\beta\in(\pi/4,\beta_0]$.

\begin{figure}[t]
\center{\includegraphics[width=0.36\textwidth]{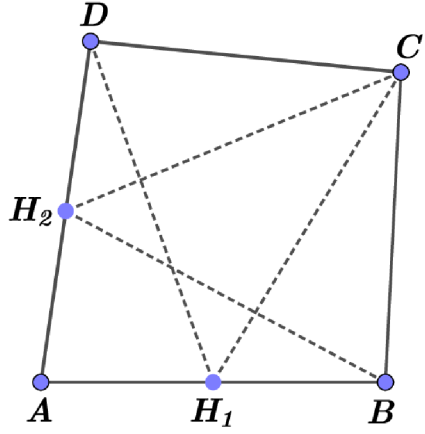}}\\
\caption{Case 2.2.}
\label{case2_2}
\end{figure}

Therefore, $F(\beta_0,\psi_0)>F(\beta,\psi)$. Now, it is easy to check that $F(\beta_0,\psi_0)<0$.
Indeed,
$$
1-2\cos(\beta_0)\sin(\beta_0+\psi_0)=
1-\sqrt{2\sqrt{3}}\sin\left(\frac{1}{2}\arccos\left(\frac{\sqrt{3}-1}{2}\right)\right)=-0.047891316... <0.
$$
Consequently, $F(\beta,\psi)<0$ and we get a contradiction. Hence, Case~2.1 does not provide extremal quadrilaterals.
\medskip

\subsection{Case 2.2}
Let us suppose that the quadrilateral under consideration is extremal.
Assume, for definiteness, that $H_1$ is an interior point of $[A,B]$,
$H_2$ is an interior point of $[A,D]$ and
$\delta(\bd(ABCD))=d(H_1,C)=d(H_1,D)=d(H_2,B)=d(H_2,C)=1$ (see Fig. \ref{case2_2}).

Then
$G_{\min} \cap [B,C]=\emptyset$ and
$G_{\min} \cap [D,C]=\emptyset$.
By Corollary \ref{cor.angle2}
we have $\angle DCH_1 \leq \angle BCH_1$ and
$\angle BCH_2 \leq \angle DCH_2$, respectively.
However, $\angle DCH_2 < \angle DCH_1$ and $\angle BCH_1 < \angle BCH_2$.
This contradiction shows that our
assumption (that $ABCD$ is extremal) is wrong.
\medskip

\subsection{Case 2.3} Let us suppose that the quadrilateral under consideration is extremal.
Assume for definiteness
that $H_1$ is an interior point of $[A,B]$,
$H_2$ is an interior point of $[C,D]$,  and
$\delta(\bd(ABCD))=d(H_1,C)=d(H_1,D)=d(H_2,A)=d(H_2,B)=1$ (see the left panel of Fig. \ref{case2_3}).
Without loss of generality, we may suppose that $\angle CDA\leq \pi/2$.

We know that $\angle DH_1A \leq \pi/2$. Suppose at first that $\angle DH_1A < \pi/2$.
Then we have the inequality $\angle H_1DC\leq \angle H_1DA$ (for an extremal quadrilateral) by Corollary~\ref{cor.angle2}.
Since $\angle H_1DC +\angle H_1DA=\angle CDA\leq \pi/2$, we get
$\angle H_1DC = \angle H_1CD\leq \pi/4$, then $\angle DH_1C\geq \pi/2$ and we obtain the following simple estimate for the perimeter:
$L(\bd(DCH_1))\geq 2+\sqrt{2}$. Hence, we have  $L(\bd(ABCD)) > 2+\sqrt{2}$, that is impossible for an extremal quadrilateral.

Now, let us suppose that $\angle DH_1A = \pi/2$.
It implies  $\angle DAB=\angle DAH_1 < \pi/2$ and
we can repeat the above arguments
changing the points $A,B,C,D, H_1,H_2$ to the points $C,D,A,B,H_2,H_1$, respectively.
Hence, if $\angle AH_2D < \pi/2$ we again get $L(\bd(ABCD)) > 2+\sqrt{2}$ by Corollary \ref{cor.angle2}.

Therefore, the remaining case, called {\bf Case~2.3a}, is $\angle AH_2D=\pi/2$ that (together with $\angle DH_1A = \pi/2$) implies $\angle CDA = \angle BAD$.

Thus $ABCD$ is an isosceles trapezoid with the bases $DA$ and $CB$ (see the right panel of Fig. \ref{case2_3}).
Put  $\alpha:=\angle CDA = \angle BAD \in \left( 0, \pi/2\right)$.
It is easy to see that $\angle H_2AD = \angle H_1DA =  \pi /2-\alpha$ and
$$
\angle H_2AB = \angle H_2BA =  \angle H_1DC = \angle H_1CD = \alpha - \left(\frac{\pi}{2}-\alpha\right) = 2\alpha - \frac{\pi}{2}\,.
$$
In particular, $\alpha > {\pi}/{4}$.

\begin{figure}[t]
\begin{minipage}[h]{0.31\textwidth}
\center{\includegraphics[width=0.99\textwidth]{case2_3.eps}}    a) \\
\end{minipage}
\quad\quad
\begin{minipage}[h]{0.38\textwidth}
\center{\includegraphics[width=0.99\textwidth]{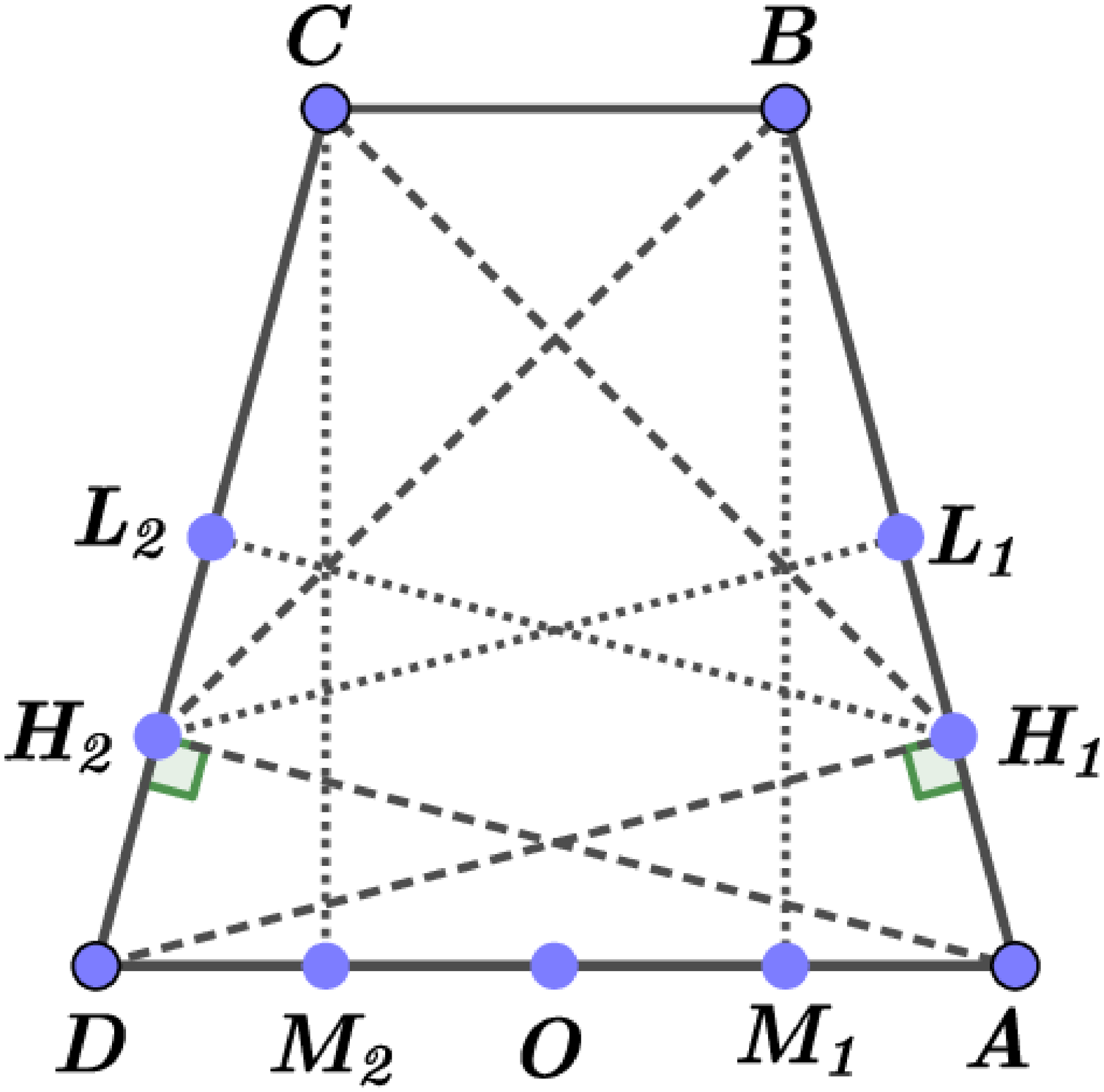}} b) \\
\end{minipage}
\caption{a) Case 2.3; b) Case 2.3a.}
\label{case2_3}
\end{figure}

Since the straight line $H_2L_1$ is orthogonal to the straight line $AB$, then
$\angle BH_2L_1 = \angle AH_2L_1= {\pi}/{2}-\left(2\alpha-{\pi}/{2}\right)=\pi-2\alpha$.
Therefore, $d(A,L_1)=\sin(\pi-2\alpha)=\sin(2\alpha)$ and $d(A,B)=d(D,C)=2\sin(2\alpha)$.

Since the straight line $BM_1$ is orthogonal to the straight line $AD$, we get
\begin{eqnarray*}
d(A,M_1)&=&d(A,B)\cos(\alpha)=2\sin(2\alpha)\cos(\alpha)=4\sin(\alpha)\cos^2(\alpha),\\
d(B,M_1)&=&d(A,B)\sin(\alpha)=2\sin(2\alpha)\sin(\alpha)=4\sin^2(\alpha)\cos(\alpha).
\end{eqnarray*}
From $d(A,M_1) < \frac{1}{2}d(A,D)$, we get
$4\sin(\alpha)\cos^2(\alpha) < \frac{1}{2\sin(\alpha)}$ or $\sin^2(2\alpha)<1/2$. Therefore, $\sin(2\alpha) < \frac{1}{\sqrt{2}}$.
Solving this inequality and taking into account that $\pi/2 < 2\alpha < \pi$, we get $\alpha\in \left( 3\pi/8,\pi/2\right)$.

Let $O$ be the midpoint of the line segment  $[D,A]$. Then
$$
d^2(O,B)=d^2(O,C)=d^2(O,M_1)+d^2(B,M_1)
$$
$$
=\left(\frac{1}{2\sin(\alpha)}-4\sin(\alpha)\cos(\alpha)\right)^2+16\sin^4(\alpha)\cos^2(\alpha)
$$
$$
=\frac{1}{4\sin^2(\alpha)}-4\cos^2(\alpha)+16\sin^2(\alpha)\cos^2(\alpha)=:h(\alpha).
$$

It is easy to check that the function $h(\alpha)$ is strictly decreasing for $\alpha\in \left(3\pi/8,\pi/2\right)$
and takes the value $h(\alpha)\geq 1$ for
$\alpha\in \left[ 3\pi/8,\alpha_0\right]$, where $\alpha_0=\arccos\left(\frac{\sqrt{6}-\sqrt{2}}{4}\right)$.
It follows from the following facts: $h(\alpha_0)=1$, $h^{\prime}(\alpha)=-\sin(2\alpha)\left(-16\cos(2\alpha)-4+\frac{1}{(1-\cos(2\alpha))^2}\right)$,
$-\cos(2\alpha)> 1/\sqrt{2}$ and $\sin(2\alpha)>0$ for $\alpha \in \left(3\pi/8,\pi/2\right)$.

Consider the perimeter
$$
L\bigl(\bd(ABCD)\bigr)=\frac{2}{\sin(\alpha)}-8\sin(\alpha)\cos^2(\alpha)+8\sin(\alpha)\cos(\alpha)=:g(\alpha).
$$
It is easy to see that the function $g(\alpha)$ is strictly decreasing for $\alpha\in \left[3\pi/8,\alpha_0\right]$.
Indeed,  $g^{\prime}(\alpha)=-8 f \bigl(\cos(\alpha)\bigr)-2\frac{\cos(\alpha)}{\sin^2(\alpha)}$, where $f(t)=1-2t+2t^2-3t^3$.
It is easy to check that the maximal value of $f(t)$ for $t\in [\cos(\alpha_0), \cos(3\pi/8)]$ is $f\bigl(\cos(3\pi/8)\bigr)=-0.1098679896...$
Hence, $g^{\prime}(\alpha)<-2\frac{\cos(\alpha)}{\sin^2(\alpha)}<0$ on the segment $\left[3\pi/8,\alpha_0\right]$.

Therefore, $g(\alpha)$ achieves its minimal value on $\left[ 3\pi/8,\alpha_0\right]$ exactly
at the point $\alpha=\alpha_0$.
This minimal value is
$$
g\left(\arccos\left(\frac{\sqrt{6}-\sqrt{2}}{4}\right)\right)=\frac{3(\sqrt{6}-\sqrt{2})}{2}+2=3.55291....
$$
Therefore, the quadrilateral $ABCD$ is not extremal.

\section{Case 3}\label{sect.4}
In this section we deal with the case $\|G_{\min}\|=3$, i.~e. $\bd(ABCD)$ has three self Chebyshev centers.
We consider several subcases according to Fig. \ref{ris}.

\subsection{Case 3.1}
Let us suppose that $\|G_{\min}\|=3$ and $G_{\min}=\{H_1, H_2, G\}$.
Without loss of generality, we may suppose that $H_1$ is an interior point of $[B,C]$,
$H_2$ is an interior point of $[A,D]$, $G$ is an interior point of $[A,B]$ such that $d(A,G) \leq d(B,G)$, and
$\delta(\bd(ABCD))=d(H_1,A)=d(H_2,B)=d(G,D)=d(G,C)=1$. It is clear that $AH_1$ is orthogonal to $[B,C]$ and $BH_2$ is orthogonal to $[A,D]$.
Moreover, $\angle AGD \leq \pi/2$ and $\angle BGC \leq \pi/2$ by Proposition \ref{locmin_mu1}. We put $\alpha:=\angle DAB=\angle CBA$.

\begin{lemma}\label{le.c.3.1}
In the conditions as above, we get $\alpha\geq \pi/3$. If, in addition, $d(B,G)\geq 1$, then $L\bigl(\bd(ABCD)\bigr)\geq 2+\sqrt{2}$.
\end{lemma}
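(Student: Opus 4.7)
My plan is to place $A=(0,0)$ and $B=(c,0)$ on the positive $x$-axis with $c=d(A,B)$. From the right triangle $ABH_2$ (right angle at $H_2$, $d(B,H_2)=1$, and $\angle BAH_2=\alpha$ acute since $H_2$ is an interior point of $[A,D]$) one reads off $c\sin\alpha=1$, i.e.\ $d(A,B)=1/\sin\alpha$; the symmetric right triangle $ABH_1$ confirms $\angle CBA=\alpha$. Parametrize $D=(t\cos\alpha,\,t\sin\alpha)$ and $C=(c-s\cos\alpha,\,s\sin\alpha)$, with $t=d(A,D)$, $s=d(B,C)$; let $g=d(A,G)$ and $u=d(B,G)=c-g$. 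The equations $d(G,D)=d(G,C)=1$ determine
\[
t=g\cos\alpha+\sqrt{1-g^2\sin^2\alpha},\qquad s=u\cos\alpha+\sqrt{1-u^2\sin^2\alpha}.
\]

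For the first inequality, I would translate $\angle AGD\le\pi/2$ into $\overrightarrow{GA}\cdot\overrightarrow{GD}\ge0$, which in coordinates becomes $g\ge t\cos\alpha$. Substituting the formula for $t$, isolating the square root and squaring reduces this to $g\sin\alpha\ge\cos\alpha$, i.e., $g\ge\cot\alpha$. By the symmetric argument on the $B$--side, $\angle BGC\le\pi/2$ gives $u\ge\cot\alpha$. Summing,
\[
\frac{1}{\sin\alpha}=g+u\ge 2\cot\alpha=\frac{2\cos\alpha}{\sin\alpha},
\]
so $\cos\alpha\le 1/2$ and $\alpha\ge\pi/3$.

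For the second inequality, I would proceed by showing the added hypothesis is incompatible with the other constraints. If $d(B,G)=u\ge 1$, then combined with $g\ge\cot\alpha$ and $g+u=1/\sin\alpha$ one gets $\cot\alpha\le g=1/\sin\alpha-u\le 1/\sin\alpha-1$, which rearranges to $\sin\alpha+\cos\alpha\le 1$. Since the interior placement of $H_1$ (and $H_2$) forces $\alpha\in(0,\pi/2)$, and on this interval $\sin\alpha+\cos\alpha=\sqrt{2}\,\sin(\alpha+\pi/4)>1$, we reach a contradiction. Hence the hypothesis $d(B,G)\ge 1$ is never realized in the configuration of Case 3.1, and the implication $L(\bd(ABCD))\ge 2+\sqrt 2$ holds (vacuously, and so a fortiori in any limiting configuration one might want to recover via monotonicity of perimeter applied to the inscribed triangle $\triangle BGC$ together with Pythagoras when $\angle BGC=\pi/2$).

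The main obstacle is keeping the several simultaneous constraints ($\angle AGD,\angle BGC\le\pi/2$, the normalization $\delta=1$, and the interior-point conditions on $H_1,H_2$) from tangling. Once one recognizes that the whole configuration is controlled by the single angular parameter $\alpha$ together with the position $g$, both assertions reduce to elementary trigonometric manipulations; the slightly subtle point is that the second assertion's hypothesis turns out to be incompatible with the first-part constraints on the interior $(0,\pi/2)$, so the clause is likely included to obtain a clean uniform statement for later case analysis rather than to supply a tight bound.
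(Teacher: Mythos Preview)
Your argument is correct. For the first assertion your coordinate computation is just an analytic rendering of the paper's geometric argument: the paper obtains $x\ge\cot\alpha$ by dropping a perpendicular from $G$ to the line $AD$ and comparing with $d(G,D)=1$, then combines this with $x\le y$ to get $x\le\frac{1}{2\sin\alpha}$; you instead derive the symmetric bound $u\ge\cot\alpha$ from $\angle BGC\le\pi/2$ and add. Either pairing gives $\cos\alpha\le\tfrac12$.

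For the second assertion you take a genuinely different route. The paper argues directly: if $d(B,G)\ge 1$, the inscribed triangle $BGD$ has $d(G,D)=1$, $d(G,B)\ge 1$ and $\angle BGD\ge\pi/2$, hence perimeter $\ge 2+\sqrt2$, and monotonicity of perimeter finishes. You instead feed $u\ge 1$ back into $g\ge\cot\alpha$ and $g+u=\frac{1}{\sin\alpha}$ to obtain $\sin\alpha+\cos\alpha\le 1$, which is impossible for $\alpha\in(0,\pi/2)$ (as forced by $H_2$ being an \emph{interior} point of $[A,D]$). In other words you show $d(B,G)=u\le\frac{1-\cos\alpha}{\sin\alpha}=\tan(\alpha/2)<1$ always, so the hypothesis of the second clause is never met and the implication is vacuous. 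This is actually a sharper observation than what the paper records: it shows that in Case~3.1 one automatically has $d(B,G)<1$, so the split into $d(B,G)\ge 1$ versus $d(B,G)\le 1$ that follows the lemma is redundant. The paper's triangle argument, on the other hand, is more robust (it does not rely on the precise strict inequality $\alpha<\pi/2$) and gives a reusable template for several later cases.
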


\begin{proof}
Let us consider the quadrilateral $ABCD$ as a subset of the isosceles triangle $ABE$ (see the left panel of Fig.~\ref{case3_1}).
We put $x=d(A,G)$,
$y=d(G,B)$, where $x\leq y$.

Since $\angle ADB\leq \pi/2$, then we get $\angle ADG\leq \pi/2$ and $\angle GDE\geq \pi/2$.
Since $\angle AGD \leq \pi/2$ and $d(A,G) \leq d(B,G)$,
then there is a point $D'\in [D,E]$, such that the straight line $D'G$ is orthogonal to the straight line $AB$.
The inequality $\angle GDE\geq \pi/2$ implies $d(G,D')\geq d(G,D)=1$. Since
$\frac{\cos(\alpha)}{\sin(\alpha)}=\frac{d(A,G)}{d(G,D')}\leq d(A,G)=x$, then we get $x\geq\frac{\cos(\alpha)}{\sin(\alpha)}$.

On the other hand, $d(A,B)=x+y=\frac{1}{\sin(\alpha)}$, hence $x\leq \frac{1}{2\sin(\alpha)}$.
Therefore,
$$
\frac{\cos(\alpha)}{\sin(\alpha)}\leq x \leq \frac{1}{2\sin(\alpha)}.
$$
Hence, we get $\cos(\alpha)\leq 1/2$ or, equivalently, $\alpha\geq  \pi/3$.

Now, if $d(B,G)\geq 1$, then the perimeter of the triangle $BGD$ is as least $2+\sqrt{2}$.
Indeed, it easily follows from the relations $d(G,D)=1$, $d(G,B) \geq 1$, and $\angle BGD \geq \pi/2$. Since quadrilateral $ABCD$ contains the triangle $BGD$, then
$L(\Gamma)\geq 2+\sqrt{2}$.
\end{proof}
\medskip

\begin{figure}[t]
\begin{minipage}[h]{0.35\textwidth}
\center{\includegraphics[width=0.99\textwidth, trim=0mm 1mm 0mm 0mm, clip]{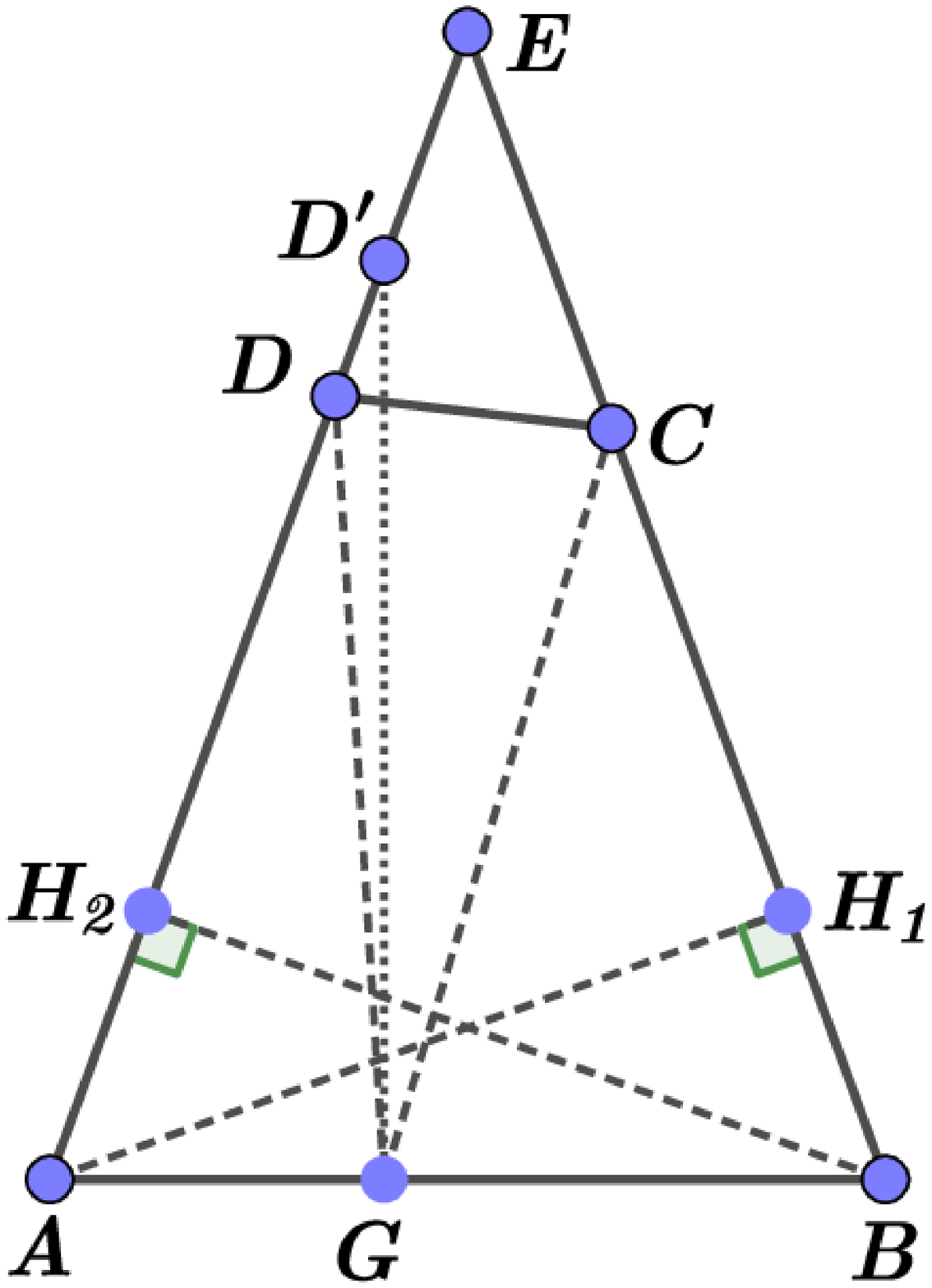}}    a) \\
\end{minipage}
\quad\quad
\begin{minipage}[h]{0.35\textwidth}
\center{\includegraphics[width=0.99\textwidth, trim=0mm 1mm 0mm 0mm, clip]{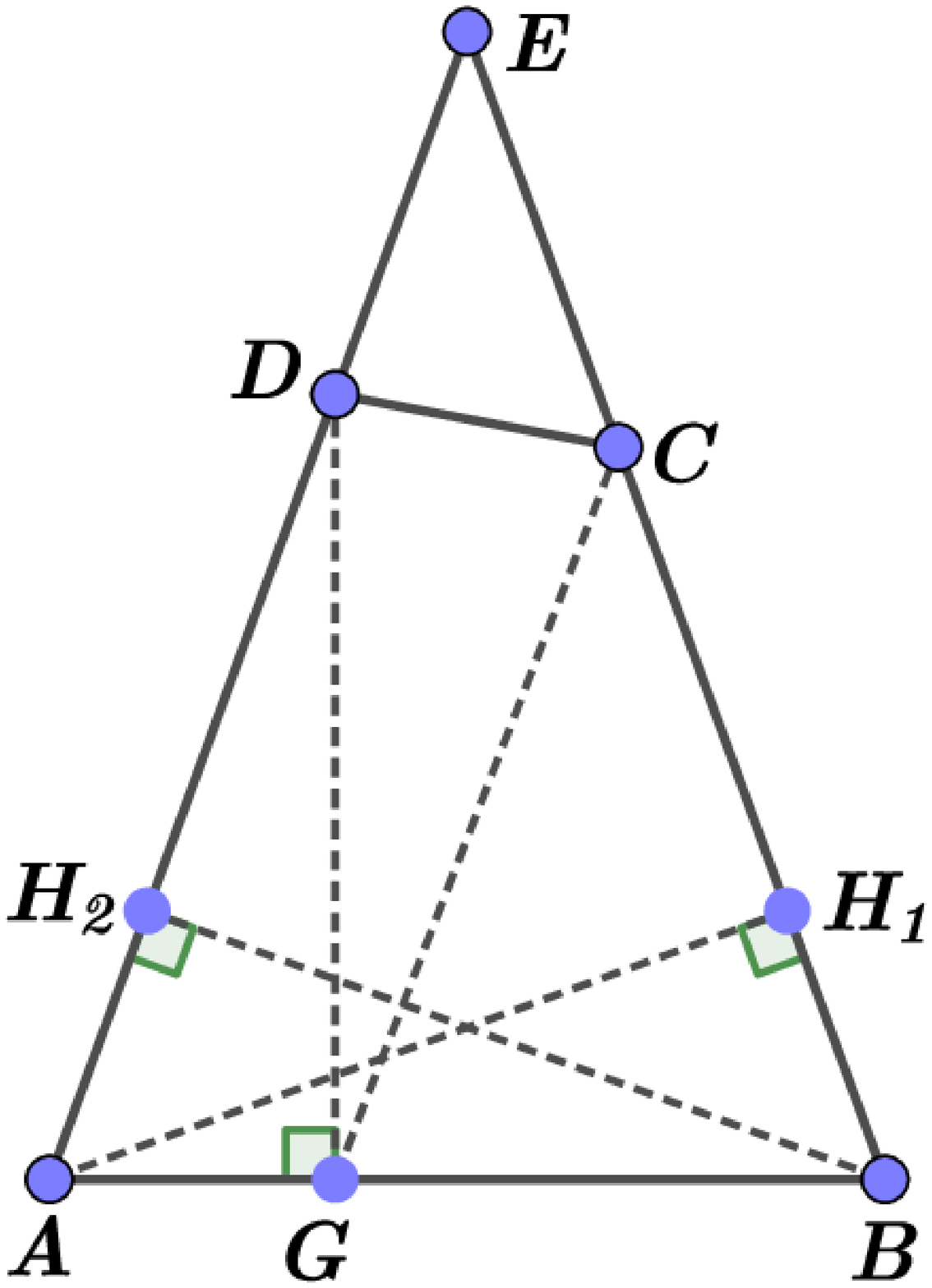}} b) \\
\end{minipage}
\caption{a) Case 3.1; b) Case 3.1a.}
\label{case3_1}
\end{figure}

Now we consider, the following class $\mathcal{P}_{\operatorname{spec}}$ of quadrilaterals: \label{specquadr}
a quadrilateral $ABCD$ is in $\mathcal{P}_{\operatorname{spec}}$ if and only if the following conditions hold:
$\alpha:=\angle DAB=\angle CBA \in [\pi/3,\pi/2)$ and there are
$H_1 \in [B,C]$, $H_2\in [A,D]$, and $G \in [A,B]$ such that
$d(H_1,A)=d(H_2,B)=d(G,D)=d(G,C)=1$, $d(A,G)\leq d(B,G)\leq 1$, $AH_1 \perp [B,C]$, $BH_2 \perp [A,D]$, $\angle AGD \leq \pi/2$, $\angle BGC \leq \pi/2$.

Lemma \ref{le.c.3.1} implies that any quadrilateral in Case 3.1 with $d(G,B) \leq 1$ is in the class $\mathcal{P}_{\operatorname{spec}}$.
Now, it suffices to prove the following

\begin{prop}\label{pr.c.3.1}
For any quadrilateral $P=ABCD$ in the class $\mathcal{P}_{\operatorname{spec}}$, we have $\delta(\Gamma)\leq 1$ and  $L(\Gamma)\geq 2+\sqrt{2}$.
In particular, there is no extremal quadrilateral in $\mathcal{P}_{\operatorname{spec}}$.
\end{prop}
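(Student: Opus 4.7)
The proposition naturally splits into two claims; I would address both as follows.

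For $\delta(\Gamma)\le 1$, the argument is immediate from the definition of $\mathcal{P}_{\operatorname{spec}}$. Since $G\in\Gamma$ satisfies $d(G,C)=d(G,D)=1$ and $d(G,A)=x\le y=d(G,B)\le 1$ by hypothesis, and since $\mu(G)=\max_{z\in\Gamma}d(G,z)$ is attained at a vertex of the polygon, we have $\mu(G)=\max\{x,y,1,1\}=1$. Hence $\delta(\Gamma)=\inf_{z\in\Gamma}\mu(z)\le\mu(G)=1$.

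For the inequality $L(\Gamma)\ge 2+\sqrt{2}$, I would set up coordinates with $A=(0,0)$, $B=(1/\sin\alpha,0)$ (the value $d(A,B)=1/\sin\alpha$ is forced by $d(A,H_1)=1$, $AH_1\perp BC$ and $\angle ABH_1=\alpha$), and place $D,C$ on the rays from $A,B$ at angle $\alpha$ to $AB$. The conditions $d(G,D)=d(G,C)=1$ together with $\angle AGD,\angle BGC\le\pi/2$ reduce the configuration to a two-parameter family: setting $\xi=\angle AGD$, $\eta=\angle BGC$, which one shows lie in $[\pi-2\alpha,\pi/2]$, the law of sines in $\triangle AGD$ and $\triangle BGC$ gives $d(A,D)=\sin\xi/\sin\alpha$ and $d(B,C)=\sin\eta/\sin\alpha$, while the isosceles triangle $\triangle DGC$ (two sides of length $1$ meeting at angle $\pi-\xi-\eta$) gives $d(C,D)=2\cos((\xi+\eta)/2)$. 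The constraint $x+y=1/\sin\alpha$ becomes $\sin(\xi+\alpha)+\sin(\eta+\alpha)=1$; sum-to-product manipulations then reduce $L$ to a compact function of $\alpha$ and $u=(\xi+\eta)/2+\alpha$:
\[
L(u,\alpha)=\frac{\sin(u-\alpha/2)}{\sin(\alpha/2)\sin u}+2\cos(u-\alpha),
\]
on the feasible region $\alpha\in[\pi/3,\pi/2)$ with an $\alpha$-dependent interval of admissible $u$.

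The main technical step is to show $L(u,\alpha)\ge 2+\sqrt{2}$ throughout the feasible region. Differentiation gives the clean expression $\partial L/\partial u=1/\sin^2 u-2\sin(u-\alpha)$, and I would locate its zeros and carefully examine the boundary (where $\xi$ or $\eta$ attains an endpoint $\pi-2\alpha$ or $\pi/2$) to verify that the infimum of $L$ equals $2+\sqrt{2}$, attained only in the degenerate limit $\alpha\to\pi/2^-$, $\xi\to 0$, $\eta\to\pi/2$ (in which $G$ collapses to $A$ and the check $L(\pi/2,\pi/2)=2+\sqrt{2}$ is a direct computation). The main obstacle is this minimization: the critical-point equation is transcendental and the feasibility boundary involves several sub-cases depending on which endpoint constraint binds, so Maple-assisted symbolic and numerical verification (in the spirit of the preceding sections) is likely indispensable in the trickiest sub-cases. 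Once $L(\Gamma)\ge 2+\sqrt{2}$ is established, the conclusion that $\mathcal{P}_{\operatorname{spec}}$ contains no extremal quadrilateral is immediate from the numerical comparison $2+\sqrt{2}=3.4142\ldots>\frac{4}{3}\sqrt{3+2\sqrt{3}}=3.3899\ldots$ recorded in the introduction: any quadrilateral in $\mathcal{P}_{\operatorname{spec}}$ satisfies $L/\delta\ge 2+\sqrt{2}$, strictly exceeding the extremal ratio.
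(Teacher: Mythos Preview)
Your setup is essentially the paper's: the same parametrization and the same two-variable perimeter function (your $L(u,\alpha)$ equals the paper's $F(\alpha,t)$ under the change $u=t+\alpha$), over the same feasible region. The substantive difference lies only in how the minimization is executed. You differentiate in $u$, obtaining the tidy $\partial L/\partial u = 1/\sin^2 u - 2\sin(u-\alpha)$, but then face a transcendental critical-point equation together with several boundary sub-cases, and you anticipate needing computer assistance. The paper instead differentiates in $\alpha$ and finds that $\partial F/\partial\alpha$ factors as a positive multiple of $\sqrt{2}\sin(\alpha/2)-\sin(\alpha+t)$, whose sign is constant on the feasible region; this forces the minimum onto the single boundary curve $t=l(\alpha)$ (geometrically, $\angle AGD=\pi/2$), where an explicit one-variable analysis gives $g(\alpha):=F(\alpha,l(\alpha))\ge g(\pi/2)=2+\sqrt{2}$. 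Both routes are valid, but the paper's choice of variable buys the cleaner reduction and avoids the numerical verification you flag as ``likely indispensable.''

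One small slip: in your description of the degenerate limit you have $\xi$ and $\eta$ interchanged. If $G\to A$ then $x=d(A,G)\to 0$, and since $x\sin\alpha=\sin(\alpha+\xi)$ this forces $\xi=\angle AGD\to\pi/2$ (not $0$), with $\eta\to 0$.
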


\begin{proof}
Since the distances from the points $A, B, C, D$ to $G$ is at most $1$, then $\delta(\Gamma)\leq 1$. Let us prove that $L(\Gamma)\geq 2+\sqrt{2}$.
We put $x:=d(A,G)$, $y:=d(G,B)$, $\beta:=\angle DGA$, and $\gamma:=\angle CGB$.
We have $x\leq y$ by our assumptions.
Note that $d(A,B)=x+y=\frac{1}{\sin(\alpha)}$, hence, $x \leq \frac{1}{2\sin(\alpha)}$.
It is easy to see also that $x\geq\frac{\cos(\alpha)}{\sin(\alpha)}$ (see e.g. the proof of Lemma \ref{le.c.3.1})

Let us consider $E$, the intersection point of  the straight lines $AD$ and $BC$ (the quadrilateral $ABCD$ is a subset of the isosceles triangle $ABE$), see
the left panel of Fig.~\ref{case3_1}.

Since $d(D,G)=d(C,G)=1$, then we have $\angle GDC=\angle GCD=\frac{\beta+\gamma}{2}$ and $d(C,D)=2\cos \left(\frac{\beta+\gamma}{2}\right)$.
The equalities $\frac{d(A,D)}{\sin(\beta)}=\frac{1}{\sin(\alpha)}$ and $\frac{d(B,C)}{\sin(\gamma)}=\frac{1}{\sin(\alpha)}$ imply
$d(A,D)+d(B,C)=\frac{\sin(\beta)+\sin(\gamma)}{\sin(\alpha)}$.
Therefore, we get
$$
L(\Gamma)=d(A,B)+d(B,C)+d(C,D)+d(D,A)=
\frac{1}{\sin(\alpha)}+\frac{\sin(\beta)+\sin(\gamma)}{\sin(\alpha)}+2\cos\left(\frac{\beta+\gamma}{2}\right).
$$

The equalities
$\frac{x}{\sin(\alpha+\beta)}=\frac{1}{\sin(\alpha)}$
and $\frac{y}{\sin(\alpha+\gamma)}=\frac{1}{\sin(\alpha)}$
imply $x=\frac{\sin(\alpha+\beta)}{\sin(\alpha)}$ and
$y=\frac{\sin(\alpha+\gamma)}{\sin(\alpha)}$, respectively.
Since $x+y=\frac{1}{\sin(\alpha)}$, then we get $\sin(\alpha+\beta)+\sin(\alpha+\gamma)=1$.

Now, we introduce new variables $t$ and $s$ such that $t:=\frac{\beta+\gamma}{2}$ and $s:=\frac{\beta-\gamma}{2}$.
We get
$$
\sin(\beta)+\sin(\gamma)=
2\sin\left(\frac{\beta+\gamma}{2}\right)\cos\left(\frac{\beta-\gamma}{2}\right)=2\sin(t)\cos(s).
$$
The equation
$$
1=\sin(\alpha+\beta)+\sin(\alpha+\gamma)=
2\sin\left(\alpha+\frac{\beta+\gamma}{2}\right)\cos\left(\frac{\beta-\gamma}{2}\right)=2\sin(\alpha+t)\cos(s),
$$
implies $\cos(s)=\frac{1}{2\sin(\alpha+t)}$.
Therefore, we get
\begin{eqnarray*}
L(\Gamma)&=&\frac{1}{\sin(\alpha)}+\frac{\sin(\beta)+\sin(\gamma)}{\sin(\alpha)}+2\cos\left(\frac{\beta+\gamma}{2}\right)\\
&=&\frac{1}{\sin(\alpha)}+\frac{\sin(t)}{\sin(\alpha)\sin(\alpha+t)}+2\cos(t)=:F(\alpha,t).
\end{eqnarray*}

We will now find useful upper and lower bounds for the value $t$.
The equation $\frac{x}{\sin(\alpha+\beta)}=\frac{1}{\sin(\alpha)}$ implies
$\sin(\alpha+\beta)=x\sin(\alpha)$ and $\pi-\alpha-\beta=\arcsin(x\sin(\alpha))$, that could be rewritten as
$$
\beta=\pi-\alpha-\arcsin\bigl(x\sin(\alpha)\bigr).
$$
Similarly, we get
$$
\gamma=\pi-\alpha-\arcsin(y\sin(\alpha))=\pi-\alpha-\arcsin\bigl(1-x\sin(\alpha)\bigr).
$$
The above equality obviously implies
$$
t=\frac{\beta+\gamma}{2}=\pi-\alpha-\frac{1}{2}\arcsin(x\sin(\alpha))-\frac{1}{2}\arcsin\bigl(1-x\sin(\alpha)\bigr)=:u(x).
$$
It is easy to check that
$$
u'(x)=\frac{\sin(\alpha)}{2}\Bigl(\arcsin'\bigl(1-x\sin(\alpha)\bigr)-\arcsin'\bigl(x\sin(\alpha)\bigr)\Bigr).
$$
Therefore, $u'(x)\geq 0$ if and only if
$1-x\sin(\alpha)\geq x\sin(\alpha)>0$ or, equivalently, $0< x\leq\frac{1}{2\sin(\alpha)}$.

Since
$
\frac{\cos(\alpha)}{\sin(\alpha)}\leq x \leq \frac{1}{2\sin(\alpha)},
$
we will find the value of the function $u$ at the boundary points:
\begin{eqnarray*}
u\left(\frac{1}{2\sin(\alpha)}\right)&=&\pi-\alpha-\arcsin\left(\frac{1}{2}\right)=\frac{5\pi}{6}-\alpha.\\
u\left(\frac{\cos(\alpha)}{\sin(\alpha)}\right)&=&\pi-\alpha-\frac{1}{2}\arcsin \bigl(\cos(\alpha)\bigr)-\frac{1}{2}\arcsin \bigl(1-\cos(\alpha)\bigr)\\
&=&\frac{3}{4}\pi-\frac{1}{2}\alpha+\frac{1}{2}\arcsin \bigl(\cos(\alpha)-1\bigr)=:l(\alpha).
\end{eqnarray*}

Therefore, $l(\alpha)\leq t \leq \frac{5}{6}\pi-\alpha$ for a given $\alpha \in [\pi/3,\pi/2]$.
Now, we will prove that the minimal value of $F(\alpha,t)$
on the set
\begin{equation}\label{eq.setS}
S:=\left\{(\alpha,t)\,\left|\,\,\frac{\pi}{3}\leq \alpha \leq \frac{\pi}{2},\,  l(\alpha) \leq t \leq\frac{5\pi}{6}-\alpha\right.\right\}
\end{equation}
is $2+\sqrt{2}$. It will be suffices to prove the proposition.
\smallskip

It is easy to see that
$l'(\alpha)=-\frac{1}{2}\left(1+\frac{\sin(\alpha)}{\sqrt{1-(\cos(\alpha)-1)^2}}\right)<0$
for $\alpha\in[0,\frac{\pi}{2})$.
Therefore, the function $l$ is strictly decreasing on the interval $[0,\frac{\pi}{2}]$, $l(0)=3\pi/4$, $l(\pi/2)=\pi/4$.
By direct computations, we get
\begin{eqnarray*}
\frac{\partial F}{\partial \alpha}(\alpha,t)&\!\!\!\!=\!\!\!\!&
-\frac{\cos(\alpha)}{\sin^2(\alpha)}-\frac{\sin(t)\cos(\alpha)}{\sin^2(\alpha)\sin(\alpha+t)}-\frac{\sin(t)\cos(\alpha+t)}{\sin(\alpha)\sin^2(\alpha+t)}\\
&\!\!\!\!=\!\!\!\!&\frac{\cos(\alpha)+1}{\sin^2(\alpha)\sin^2(\alpha+t)}\bigl(\cos^2(\alpha+t)-\cos(\alpha)\bigr)\\
&\!\!\!\!=\!\!\!\!&\frac{\cos(\alpha)+1}{\sin^2(\alpha)\sin^2(\alpha+t)}\left(2\sin^2\left(\frac{\alpha}{2}\right)-\sin^2(\alpha+t)\right)\\
&\!\!\!\!=\!\!\!\!&\frac{\cos(\alpha)+1}{\sin^2(\alpha)\sin^2(\alpha+t)}
\left(\sqrt{2}\sin\left(\frac{\alpha}{2}\right)+\sin(\alpha+t)\right) \left(\sqrt{2}\sin\left(\frac{\alpha}{2}\right)-\sin(\alpha+t)\right).
\end{eqnarray*}

The derivative $\frac{\partial F}{\partial \alpha}(\alpha,t)$ has the same sign as
the expression $\left(\sqrt{2}\sin\left(\frac{\alpha}{2}\right)-\sin(\alpha+t)\right)$.
Solving equation $\sqrt{2}\sin\left(\alpha/2\right)=\sin(\alpha+t)$ with respect to $t$, we obtain
$$
t=\pi-\alpha-\arcsin\left(\sqrt{2}\sin\bigl(\alpha/2\bigr)\right).
$$
It is easy to show that the graph of this function does not
intersect a two-dimensional connected set $S$ (see \eqref{eq.setS}).
Since $\sqrt{2}\sin\left(\pi/4\right)-\sin\left(\pi/2+\pi/4\right)=\frac{2-\sqrt{2}}{2}>0$, then
we get $\frac{\partial F}{\partial \alpha}(\pi/2, \pi/4)>0$.
Thus, the function $F(\alpha,t)$ is strictly  increasing with respect to the variable $\alpha$ for any $t\in \left[l(\alpha),5\pi/6-\alpha \right]$.
\smallskip

Therefore, the function $F(\alpha,t)$ achieves its minimum value on $S$ at one of the points of the curve $t=l(\alpha)$, that corresponds
to the following configuration, which will be called {\bf Case 3.1a} (see the right panel of Fig.~\ref{case3_1}):
$d(A,H_1)=d(B,H_2)=d(D,G)=d(C,G)=1$,
$\angle AGD = \pi/2$.
Now we are going to find the explicit expression for $F(\alpha,l(\alpha))$, $\alpha\in [\pi/3,\pi/2)$.

We have $d(A,B)=d(A,D)=\frac{1}{\sin(\alpha)}$, $d(A,G)=\frac{\cos(\alpha)}{\sin(\alpha)}$
and $d(G,B)=\frac{1-\cos(\alpha)}{\sin(\alpha)}$ for $t=l(\alpha)$.
Let $\angle GDC = \angle GCD =\beta$.
Since $\angle DGC \leq \pi/2$, we have $\beta\in \left[\pi/4,\pi/2\right]$.
It~is clear that $\angle CGB = \pi/2-(\pi-2\beta)=2\beta-\pi/2$.
The equations
$$
\frac{d(B,C)}{\sin(2\beta-\pi/2)}=\frac{d(B,C)}{-\cos(2\beta)}=
\frac{d(C,G)}{\sin(\alpha)}=\frac{1}{\sin(\alpha)}
$$
imply $d(B,C)=-\frac{\cos(2\beta)}{\sin(\alpha)}$.
Since $d(C,D)=2\cos(\beta)$, then we have
$$
F(\alpha,l(\alpha))=d(A,B)+d(B,C)+d(C,D)+d(D,A)=
\frac{2-\cos(2\beta)}{\sin(\alpha)}+2\cos(\beta).
$$

It is clear that $\angle GCB=\pi-\left(2\beta-\pi/2+\alpha\right)=3\pi/2-(\alpha+2\beta)$.
Since $\angle GCB \in [0,\pi/2]$, then $\alpha+2\beta \in \left[\pi,3\pi/2\right]$.
The equations
$$
\frac{d(C,G)}{\sin(\alpha)}=\frac{1}{\sin(\alpha)}=
\frac{d(G,B)}{\sin(\angle GCB)}=\frac{d(G,B)}{\cos(\alpha+2\beta-\pi)}=\frac{1-\cos(\alpha)}{\sin(\alpha)\cos(\alpha+2\beta-\pi)}
$$
imply $\cos(\alpha+2\beta-\pi)=1-\cos(\alpha)$. Therefore,
$2\beta=\pi-\alpha+\arccos \bigl(1-\cos(\alpha)\bigr)$ and, consequently,
$$
\cos(2\beta)=-\cos\bigl(\alpha-\arccos(1-\cos(\alpha))\bigr)=\cos^2(\alpha)-\cos(\alpha)-
\sqrt{2\cos(\alpha)-\cos^2(\alpha)}\sin(\alpha).
$$

Since $\cos(\beta)=\sqrt{\frac{1+\cos(2\beta)}{2}}$, then we get
\begin{eqnarray*}
F(\alpha,l(\alpha))&=&
\frac{2+\cos(\alpha)-\cos^2(\alpha)}{\sin(\alpha)}+
\sqrt{2\cos(\alpha)-\cos^2(\alpha)}\\
&&+\sqrt{2+2\cos^2(\alpha)-2\cos(\alpha)-2\sqrt{2\cos(\alpha)-\cos^2(\alpha)}\sin(\alpha)}=:g(\alpha).
\end{eqnarray*}

By direct computations we get that the function $g(\alpha)$ achieves its maximal value on
$\left[\pi/3,\pi/2\right]$
at the point $\alpha=\arccos(t_0)= 1.416644291...$, where $t_0$ is the root of the polynomial
$f(t)=16t^7-120t^6+346t^5-485t^4+348t^3-118t^2+18t-1$ on the interval $[0.15,0.16]$.
This maximal value is
$$
g \bigl(\arccos(t_0)\bigr)=3.51731....
$$

By direct computations we get also that the function $g(\alpha)$ achieves its minimal value on
$\left[ \pi/3,\pi/2\right]$ exactly
at the point $\alpha=\pi/2$.
This minimal value is
$g\left(\pi/2\right)=2+\sqrt{2}$.
Therefore, $F(\alpha,t) \geq F(\alpha,l(\alpha))= g(\alpha) \geq g(\pi/2)=2+\sqrt{2}$  for all $(a,t) \in S$.
The proposition is proved.
\end{proof}

\begin{remark}
Note that minimal value of $F(\alpha,t)$ on $S$ is achieved at the point $(\alpha,t)=(\pi/2, l(\pi/2))=(\pi/2, \pi/4)$, that corresponds to a degenerate
quadrilateral $ABCD$, where $B=C$, $d(A,B)=d(A,D)=1$,  and $\angle BAD =\pi/2$. The self Chebyshev radius of the boundary of this degenerate quadrilateral (in fact, a triangle)
is $1/\sqrt{2} <1$.
\end{remark}

\medskip

\subsection{Case 3.2}
Without loss of generality, we may suppose that $G_{\min}=\{E, G, F\}$ (see Fig. \ref{case3_2}),
where $E$ is an interior point of $[A,B]$,
$F$ is an interior point of $[B,C]$, $G$ is an interior point of $[A,D]$ such that $d(A,G) \leq d(D,G)$, and
$\delta(\bd(ABCD))=d(E,D)=d(F,A)=d(G,B)=d(G,C)=1$. It is clear that $DE$ is orthogonal to $[A,B]$ and $AF$ is orthogonal to $[B,C]$.
Moreover, $\angle AGB \leq \pi/2$ and $\angle DGC \leq \pi/2$ by Proposition \ref{locmin_mu1}.

Now we are going to show that this case is impossible.

\begin{remark}\label{re.3.2}
It should be noted that it does not matter for the arguments below whether there is a fourth self Chebyshev center for $\bd(ABCD)$ or not.
Therefore, we can use this arguments also for some quadrilaterals with four self Chebyshev centers.
\end{remark}

\begin{figure}[t]
\begin{minipage}[h]{0.41\textwidth}
\center{\includegraphics[width=0.99\textwidth, trim=0mm 0mm 0mm 1mm, clip]{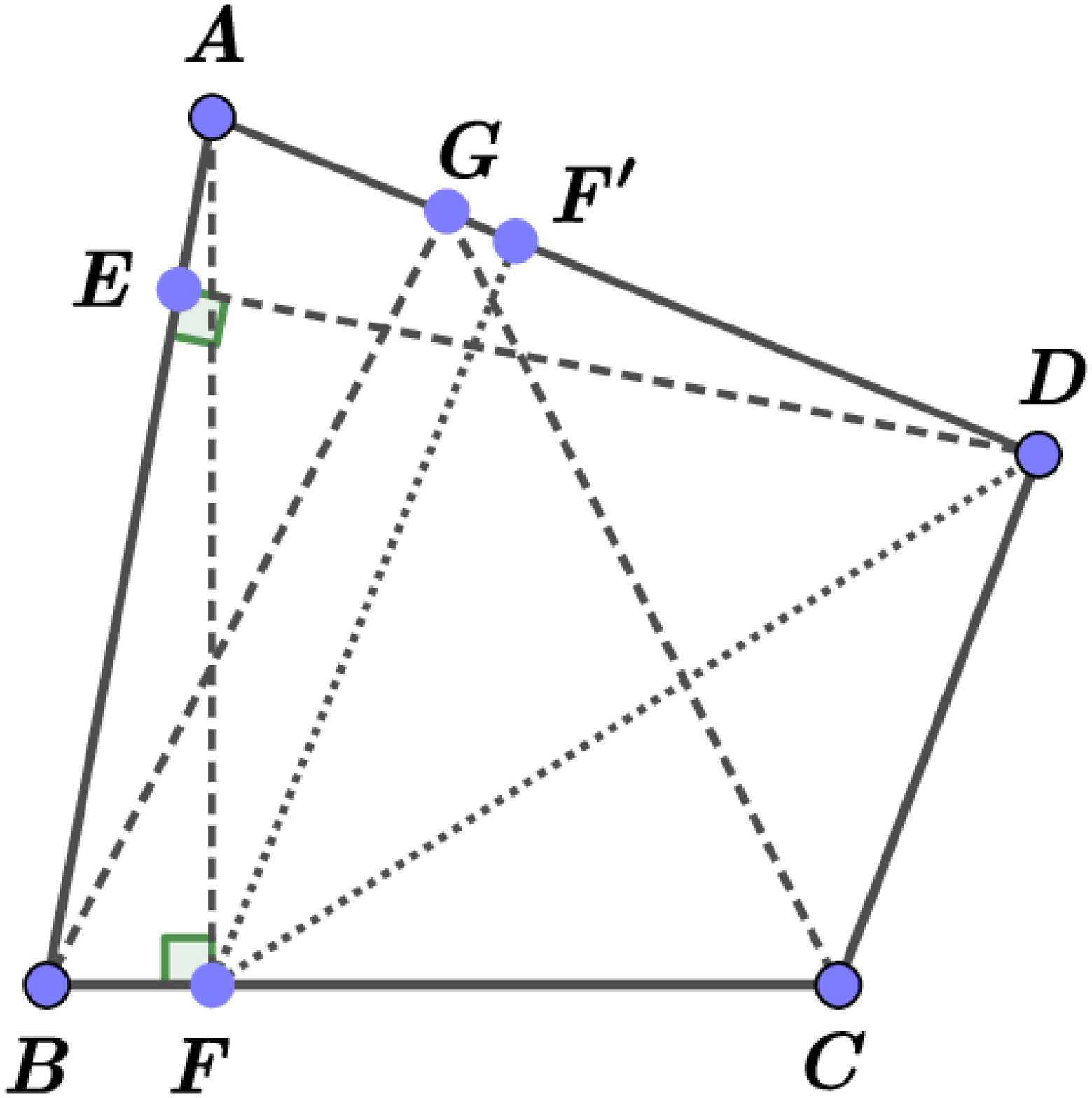}}    a) \\
\end{minipage}
\quad\quad
\begin{minipage}[h]{0.41\textwidth}
\center{\includegraphics[width=0.99\textwidth, trim=0mm 0mm 0mm 1mm, clip]{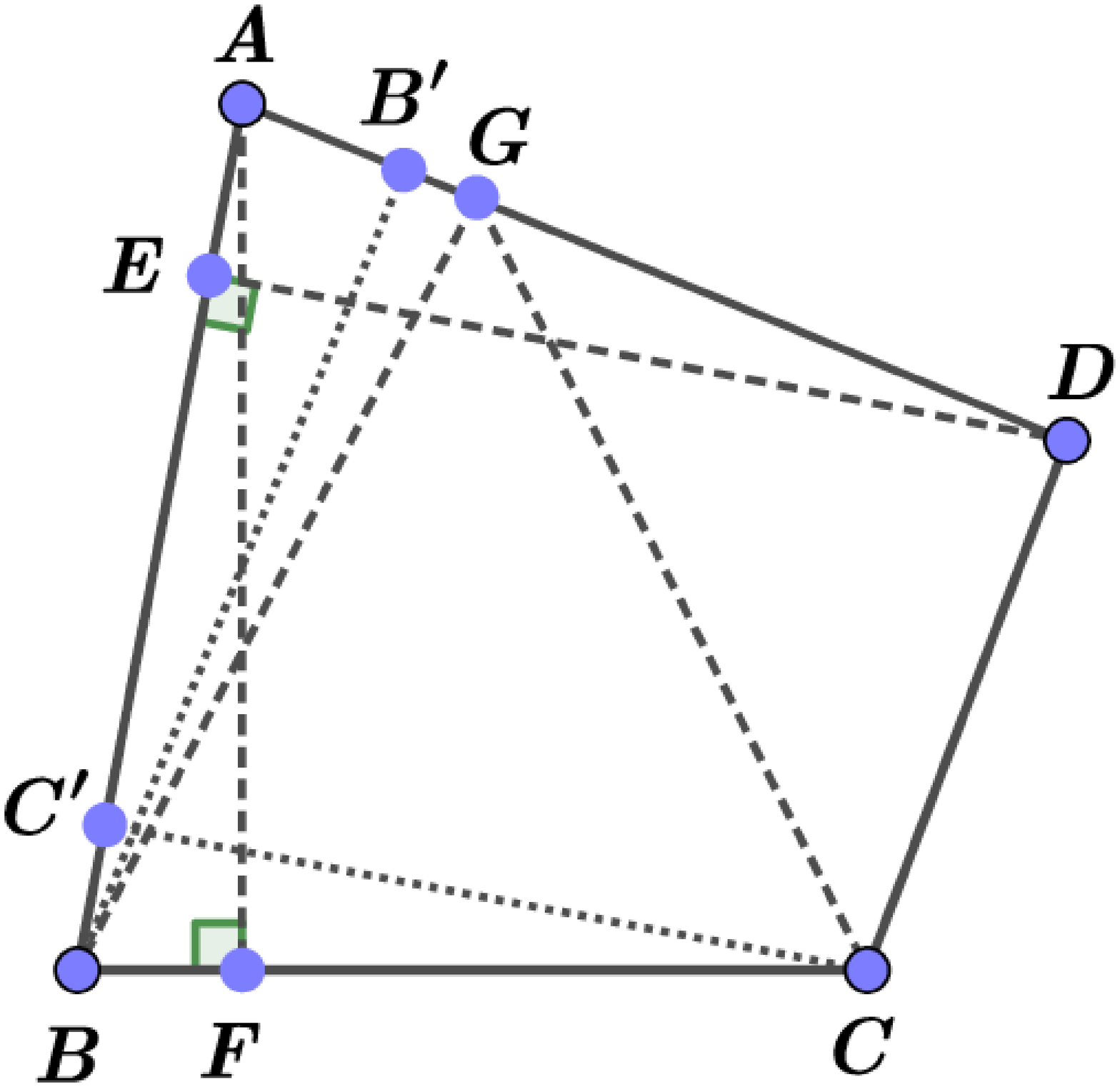}} b) \\
\end{minipage}
\caption{a) Case 3.2a; b) Case 3.2b.}
\label{case3_2}
\end{figure}

We put  $\angle GCB=\angle GBC=:\varphi$,  $\angle BAD=:\alpha$ and $\angle BAF=:\beta$ (see the left panel of Fig. \ref{case3_2}).
Let $F'$ is an interior point of $[A,D]$ such that the straight line $FF'$ is orthogonal to the straight line $[A,D]$.

Since $d(A,D)=1/\sin(\alpha)$ and
$d(A,F')=d(A,F)\cos(\alpha-\beta)=\cos(\alpha-\beta)$, then we have
$d(F',D)=d(A,D)-d(A,F')=1/\sin(\alpha)-\cos(\alpha-\beta)$.
It is clear that
$d(F',F)=d(A,F)\sin(\alpha-\beta)=\sin(\alpha-\beta)$.
Therefore, we get
$$
d^2(F,D)=d^2(F',D)+d^2(F',F)=
\left(\frac{1}{\sin(\alpha)}-\cos(\alpha-\beta)\right)^2+\sin^2(\alpha-\beta)
$$
$$
=\frac{1}{\sin^2(\alpha)}-\frac{2\cos(\alpha-\beta)}{\sin(\alpha)}+1=\frac{1-2\cos(\alpha-\beta)\sin(\alpha)}{\sin^2(\alpha)}+1.
$$
Hence, $d^2(F,D)\leq 1$ if and only if $2\cos(\alpha-\beta)\sin(\alpha)\geq 1$.

Since $\angle AGB \leq \pi/2$,
there is a point $B'\in [A,G]$, such that the straight line $B'B$ is orthogonal to the straight line $AD$
(see the right panel of Fig. \ref{case3_2}).
It is easy to see that $\angle B'BG= \angle ABF - \angle ABB' - \angle GBC= \pi/2-\beta-(\pi/2-\alpha)-\varphi=\alpha-\beta-\varphi$.
Hence,  $\angle BGA=\pi/2 -(\alpha-\beta-\varphi)$.

The equations
$$
\frac{d(B,G)}{\sin(\alpha)}=\frac{1}{\sin(\alpha)}=\frac{d(A,B)}{\sin(\angle BGA)}=\frac{1}{\cos(\beta)\cos(\alpha-\beta-\varphi)}
$$
imply $\cos(\alpha-\beta-\varphi)=\sin(\alpha)/\cos(\beta)$. Therefore,
$\varphi=\alpha-\beta-\arccos\left(\frac{\sin(\alpha)}{\cos(\beta)}\right)$
and, consequently,
$$
\cos(\varphi)=\cos(\alpha-\beta)\frac{\sin(\alpha)}{\cos(\beta)}+\sin (\alpha-\beta)\sin \left(\arccos\left(\frac{\sin(\alpha)}{\cos(\beta)}\right)\right)
$$
$$
=\frac{1}{\cos(\beta)}(\cos(\alpha-\beta)\sin(\alpha)+\sin(\alpha-\beta)\sqrt{\cos^2(\beta)-\sin^2(\alpha)}).
$$

Let $C'$ is an interior point of $[A,B]$ such that the straight line $CC'$ is orthogonal to the straight line $AB$
(see the right panel of Fig. \ref{case3_2}).
The equations
$$
d(B,C)=2\cos(\varphi)=\frac{d(C,C')}{\sin(\pi/2-\beta)}=\frac{d(C,C')}{\cos(\beta)}
$$
imply $d(C,C')=2\cos(\varphi)\cos(\beta)$.

It is easy to see that
$d(B,C')=d(B,C)\cos(\pi/2-\beta)=2\cos(\varphi)\sin(\beta)$,
$d(A,E)=\cot(\alpha)$ and
$d(A,B)=\frac{1}{\cos(\beta)}$.
Hence,
$$
d(E,C')=d(A,B)-d(A,E)-d(B,C')=\frac{1}{\cos(\beta)}-\cot(\alpha)-2\cos(\varphi)\sin(\beta).
$$

Therefore, we get
$$
d^2(E,C)=d^2(E,C')+d^2(C,C')
$$
$$
=\left(\frac{1}{\cos(\beta)}-\cot(\alpha)-2\cos(\varphi)\sin(\beta)\right)^2+4\cos^2(\varphi)\cos^2(\beta).
$$

Let us now show that the inequalities $d^2(F,D)\leq 1$ and $d^2(E,C)\leq 1$ cannot hold simultaneously.
For this goal, we prove the following simple result.

\begin{lemma}\label{le.c.3.2.1}
Let $M$ be a topological space and $X, Y$ are closed subsets of $M$.
Let the following conditions be satisfied:
\begin{itemize}
\item $Y$  be a connected set;
\item $\bd(X)\cap Y=\emptyset$;
\item There exists $x\in Y$ such that $x\notin X$.
\end{itemize}
Then $X\cap Y=\emptyset$.
\end{lemma}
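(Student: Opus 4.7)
The plan is to deduce the conclusion from a standard connectedness argument, exploiting the fact that a closed set $X$ in a topological space has interior and boundary whose union is $X$ and which are disjoint.

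First I would write $Y = (Y \cap X) \cup (Y \setminus X)$, this being a disjoint decomposition. Since $X$ is closed, $Y \setminus X = Y \cap (M \setminus X)$ is open in $Y$ (in the subspace topology). This is the easy half.

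For the other half, I would use the hypothesis $\bd(X) \cap Y = \emptyset$: every point of $Y \cap X$ lies in $X \setminus \bd(X) = \intt(X)$, hence $Y \cap X = Y \cap \intt(X)$, which is also open in $Y$. Thus $Y$ is partitioned into two relatively open subsets. Since $Y$ is connected, one of them must be empty; and since the third hypothesis guarantees that $Y \setminus X$ is nonempty, we must have $Y \cap X = \emptyset$, which is the desired conclusion.

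There is no real obstacle here — the lemma is a routine topological observation, and the only thing to be careful about is invoking the hypothesis $\bd(X) \cap Y = \emptyset$ in the correct form to conclude that $Y \cap X$ coincides with $Y \cap \intt(X)$ and is therefore open in $Y$. The deeper interest is of course in how this lemma will later be applied (presumably with $M = \mathbb{R}^2$ or a parameter space, $X$ the set on which $d^2(F,D) \le 1$ and $d^2(E,C) \le 1$ simultaneously hold, and $Y$ the connected locus of admissible parameters) to rule out the simultaneous inequalities in Case 3.2.
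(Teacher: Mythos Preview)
Your proof is correct and is essentially identical to the paper's: the paper sets $X_1 = X \setminus \bd(X) = \intt(X)$ and $X_2 = M \setminus X$, observes that $Y \subset X_1 \cup X_2$ with $X_1, X_2$ open and disjoint, and then invokes connectedness of $Y$ together with the third hypothesis exactly as you do. The only cosmetic difference is that the paper phrases the decomposition in $M$ while you phrase it in the subspace topology on $Y$.
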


\begin{proof}
Let $X_1=X{\setminus}\bd(X)$ and $X_2=M{\setminus}X$. It is clear that $Y\subset X_1\cup X_2$ and
$X_1, X_2$ are open and disjoint sets. As $Y$ is a connected set, we have $Y\subset X_1$ or $Y\subset X_2$. By the third property, $Y\subset X_1$ is impossible.
Hence, $Y\subset X_2$
and $X\cap Y=\emptyset$.
\end{proof}
\smallskip

Let $Y$ the connected set of solutions $(\alpha,\beta) \in [0,\pi/2]^2$ to the inequality $d^2(F,D)\leq 1$ (i.e. $2\cos(\alpha-\beta)\sin(\alpha)\geq 1$)
and $X$ the set of solutions to the inequality $d^2(E,C)\leq 1$.
If $2\cos(\alpha-\beta)\sin(\alpha)=1$ then $\beta=-\arccos \left(\frac{1}{2\sin(\alpha)}\right)+\alpha$
and
$$
d^2(C,C')=\left( \sqrt {\frac {4\sin^2(\alpha)-1}{\sin^2(\alpha)}}\sqrt{
 \cos \left( -\arccos \left(\frac{1}{2\sin(\alpha)}\right) +\alpha\right) ^2-\sin^2(\alpha)}+1 \right) ^2>1.
$$
Hence, $d^2(E,C)>1$ and $\bd(X)\cap Y=\emptyset$. It is easy to see that  $x\in Y$ and $x\notin X$ for the point $x=(\pi/4,\pi/4)$.
Therefore, the inequalities $d^2(F,D)\leq 1$ and $d^2(E,C)\leq 1$ cannot hold simultaneously. Hence, the case under consideration is impossible.

\medskip

\subsection{Case 3.3}
Suppose that $\|G_{\min}\|=3$ and $G_{\min}=\{E, G, H\}$ (see Fig.~\ref{case3_3}).
Without loss of generality, we may suppose that $E$ is an interior point of $[A,B]$,
$G$ is an interior point of $[A,D]$, $H$ is an interior point of $[C,D]$, and
$\delta(\bd(ABCD))=d(E,D)=d(G,B)=d(G,C)=d(H,A)=d(H,B)=1$. It is clear that $ED$ is orthogonal to $[A,B]$.
Moreover, $\angle AGB \leq \pi/2$ and $\angle DGC \leq \pi/2$,
$\angle AHD \leq \pi/2$ and $\angle BHC \leq \pi/2$ by Proposition \ref{locmin_mu1}.

\begin{figure}[t]
\center{\includegraphics[width=0.62\textwidth]{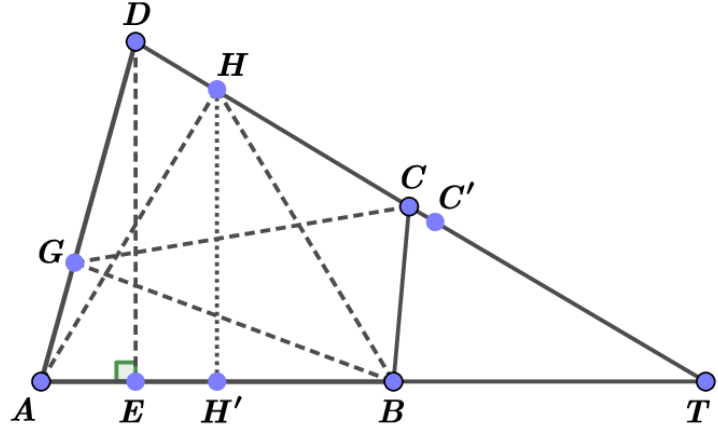}}\\
\caption{Case 3.3.}
\label{case3_3}
\end{figure}

Let us consider the quadrilateral $ABCD$ as a subset of the triangle $ATD$
(see Fig.~\ref{case3_3}).
We put $\alpha:=\angle BAD$, $\beta:=\angle HAB=\angle HBA$, and $\gamma:=\angle ATD$.
Since $\angle BAD > \angle HAB$ and $\angle HBA > \angle ATD$, we get $\alpha > \beta > \gamma >0$.

Since $\beta+\gamma=\angle HAT +\angle HTA =\angle AHD \leq \pi/2$, we get $\gamma \leq \pi/2 -\beta$ and $\cot(\gamma) \geq \cot(\pi/2 -\beta)=\tan(\beta)$.
It is clear that $d(A,D)\cdot \sin \angle ADT$, the distance  from the point $A$ to the straight line $DT$, is at most $d(A,H)=1=d(D,E)=d(A,D) \sin \angle BAD$.
Therefore, $\sin \angle ADT=\sin(\pi-\alpha-\gamma)=\sin(\alpha+\gamma)\leq \sin(\alpha)=\sin \angle BAD$.
It implies that $\alpha+\gamma >\pi/2$ and $\pi-(\alpha+\gamma)\leq \alpha$. Hence, we get $2\alpha \geq\pi - \gamma \geq \pi -(\pi/2-\beta)=\pi/2+\beta$ and
\begin{equation}\label{eq.L1.bound}
\alpha \geq \beta/2 +\pi/4.
\end{equation}

Let $H'$ be the midpoint of $[A,B]$. Since triangles $TH'H$ and $TED$ are similar, then we have $\frac{d(T,H')}{d(E,T)}=\frac{d(H,H')}{d(E,D)}=\sin(\beta)$.
Hence $d(T,H')=\sin(\beta)\cot(\gamma)$.
Further on,
$$
d(E,H')=d(T,E)-d(T,H')=\cot(\gamma)-\sin(\beta)\cot(\gamma)=\bigl(1-\sin(\beta)\bigr)\cot(\gamma),
$$ and
$$
d(E,A)=d(A,H')-d(E,H')=\cos(\beta)-\bigl(1-\sin(\beta)\bigr)\cot(\gamma).
$$
Moreover, we get $\cot(\alpha)=\frac{d(E,A)}{d(E,D)}=\cos(\beta)-\bigl(1-\sin(\beta)\bigr)\cot(\gamma)$, which implies
\begin{equation}\label{eq.gamma.expl}
\gamma= \arccot\left(\frac{\cos(\beta)-\cot(\alpha)}{1-\sin(\beta)}\right).
\end{equation}

From the triangle $ABG$ we have
$$
\frac{d(A,B)}{\sin(\angle AGB)}=\frac{d(A,G)}{\sin(\angle ABG)}=\frac{d(B,G)}{\sin(\angle BAG)}=\frac{1}{\sin(\alpha)}=d(A,D)
$$
and, consequently, $2\cos(\beta)\sin(\alpha)=\sin(\angle AGB)\leq 1$ and
$$
d(A,G)\cdot\sin(\alpha) =\sin(\angle ABG)=\sin\bigl(\alpha+\arcsin(2\sin(\alpha)\cos(\beta))\bigr).
$$
Then we get
\begin{equation}\label{eq.L2.bound}
\beta \geq \arccos \left( \frac{1}{2 \sin(\alpha)}\right),
\end{equation}

$$
d(G,D)=d(A,D)-d(A,G)=\Bigl(1-\sin\bigl(\alpha+\arcsin(2\sin(\alpha)\cos(\beta))\bigr)\Bigr)\cdot \bigl(\sin(\alpha)\bigr)^{-1}.
$$

Let us consider  $\theta:=\angle GCD$ and $p:=1-\sin\bigl(\alpha+\arcsin(2\sin(\alpha)\cos(\beta))\bigr)$.
It is clear that $\angle CGD= \pi-(\pi-\alpha-\gamma)-\theta=\alpha+\gamma-\theta$.
From the triangle $DCG$ we get equalities
$$
\frac{d(G,D)}{\sin(\theta)}=\frac{d(D,C)}{\sin(\alpha+\gamma-\theta)}=\frac{d(G,C)}{\sin(\angle GDC)}=\frac{1}{\sin(\alpha+\gamma)}
$$
imply
$$
\sin(\theta)=\sin(\alpha+\gamma)\cdot d(G,D)=
\frac{\sin(\alpha+\gamma)}{\sin(\alpha)} \cdot p\quad \mbox{and}\quad d(D,C)=\frac{\sin(\alpha+\gamma-\theta)}{\sin(\alpha+\gamma)}\,.
$$

It is clear that $\angle BGC= \pi- \angle AGB-\angle CGD=\pi -\alpha-\gamma+\theta-\arcsin(2\sin(\alpha)\cos(\beta))$. This implies
$$
d(B,C)=2\sin\left(\frac{\angle BGC}{2}\right)=2\cos\left(\frac{\alpha+\gamma-\theta+\arcsin(2\sin(\alpha)\cos(\beta))}{2}\right).
$$

Therefore, we get the following expression for the perimeter of the quadrilateral $ABCD$:
\begin{eqnarray*}
L\bigl(\bd(ABCD)\bigr)=d(A,B)+d(A,D)+d(D,C)+d(B,C)\\
=2\cos(\beta)+\frac{1}{\sin(\alpha)}+
\frac{\sin(\alpha+\gamma-\theta)}{\sin(\alpha+\gamma)}+
2\cos\left(\frac{\alpha+\gamma-\theta+\arcsin(2\sin(\alpha)\cos(\beta))}{2}\right).
\end{eqnarray*}

We can explicitly express $\gamma$ and $\theta$ in terms of $\alpha$ and $\beta$
from \eqref{eq.gamma.expl} and the equality
\begin{eqnarray*}
\sin(\theta)&=&\frac{\sin(\alpha+\gamma)}{\sin(\alpha)} \cdot p=\frac{\sin(\alpha+\gamma)}{\sin(\alpha)}
\Bigl(1-\sin \bigl(\alpha+\arcsin(2\sin(\alpha)\cos(\beta))\bigr)\Bigr).
\end{eqnarray*}

Now we going to obtain some estimation on possible values of $\alpha$ and $\beta$.
Note that
\begin{eqnarray*}
\cot(\alpha)&=\cos(\beta)-\bigl(1-\sin(\beta)\bigr)\cot(\gamma)\leq \cos(\beta)-\bigl(1-\sin(\beta)\bigr)\tan(\beta)=\frac{1-\sin(\beta)}{\cos(\beta)}
\end{eqnarray*}
due to  $\cot(\gamma) \geq \cot(\pi/2-\beta)=\tan(\beta)$.
Note also that
$\cot^2(\alpha)=\frac{1}{\sin^2(\alpha)}-1\geq 4\cos^2(\beta)-1$ (due to $2\sin(\alpha)\cos(\beta)\leq 1$), therefore we have
$(1-\sin(\beta))^2 \geq \cos^2(\beta) \bigl( 4\cos^2(\beta)-1\bigr)$ or, dividing by $1-\sin(\beta)>0$,
$$
1+2\sin(\beta)-2 \sin^2(\beta)-2\sin^3(\beta)\leq 0.
$$
Hence, $\sin(\beta)\geq y_0=0.8546376797...$ and $\beta \geq \arcsin(y_0)=1.024852629...$, where $y_0$ is a unique root of the polynomial $2y^3+2y^2-2y-1$ on
$[0,1]$.
Therefore, we get that $\alpha \in  [\alpha_0, \pi/2]$ and $\beta \in [\beta_0, \pi/2]$,
where  $\alpha_0=\arcsin(y_0)/2+\pi/4=1.29782448...$, $\beta_0=\arcsin(y_0)=1.024852629...$.

\medskip
Now we will find additional important restrictions on the values of $\alpha$, $\beta$.
We are going to prove that any possible pair $(\alpha,\beta)$ should lie in the following set:
\begin{equation}\label{eq.triang.curv}
\Omega=\left\{(\alpha,\beta)\,\,\left|\,\, \pi/2 \geq \alpha \geq \beta/2+\pi/4, \,\,\pi/3 \geq \beta \geq \arccos \left( \frac{1}{2 \sin(\alpha)}\right)\right.  \right\}.
\end{equation}
It is easy to check that $\Omega$ is bounded by the straight lines  $L_1=\left\{(\alpha,\beta)\,|\, \alpha=\beta/2+\pi/4\right\}$, $\beta=\pi/3$, and by the curve
$$
L_2=\left\{(\alpha,\beta)\,|\, 2\sin(\alpha)\cos(\beta)=1,\,\, \pi/2 \geq \alpha \geq \beta\geq\arcsin(y_0)\right\}.
$$
The above argument implies that the point $M:=(\alpha_0,\beta_0)$ lies in the intersection of $L_1$ and $L_2$. It is easy to check also that $L_1\cap L_2 =\{M\}$.

Recall that we have obtained the inequalities $\alpha \geq \beta/2+\pi/4$ and $2\sin(\alpha)\cos(\beta)\leq 1$. Hence it suffices to prove that $\beta \leq \pi/3$.
In any case, all possible pairs $(\alpha,\beta)$ lie in the set
\begin{equation}\label{eq.btriang.curv}
\Pi=\left\{(\alpha,\beta)\,\,\left|\,\, \pi/2 \geq \alpha \geq \beta/2+\pi/4, \,\,\pi/2 \geq \beta \geq \arccos \left( \frac{1}{2 \sin(\alpha)}\right) \right. \right\}.
\end{equation}
Note that the straight line  $L_1$ passes through the points $(\pi/2,\pi/2)$,
hence we can omit the restriction $\beta \leq \pi/2$ from the formal point of view.
\smallskip

Let us consider the point $C'$ on $[C,T]$ such that $d(E,C')=1$ (recall that $d(E,C) \leq 1$ and $\gamma <\pi/4$, hence, $d(E,T) >d(E,D)=1$).
Note that $d(E,C)\leq d(E,C')$ holds if and only if $d(D,C)\leq d(D,C')$ or, equivalently, $d(G,C')\geq 1$.
We can rewrite the latter inequality using the law of cosines the triangle $GDC'$.
It is clear that $d(D,C')=2\sin(\gamma)$ (recall that $d(E,C')=d(E,D)=1$ and $\angle EDC'=\pi/2-\gamma$). Therefore,
$$
d(D,C')^2+2d(G,D)d(D,C')\cos(\alpha+\gamma)+d(G,D)^2=d(G,C')^2\geq 1,
$$
which can be rewritten $\Bigl(d(G,D)\cdot \sin(\alpha)=p=1-\sin\bigl(\alpha+\arcsin(2\sin(\alpha)\cos(\beta))\bigr)\Bigr)$ as
$$
4\sin^2(\gamma)+\frac{4\sin(\gamma)\cos(\alpha+\gamma)}{\sin(\alpha)}\,p+\frac{p^2}{\sin^2(\alpha)}\geq 1.
$$
Now, we define the function

\begin{figure}[t]
\center{\includegraphics[width=0.7\textwidth]{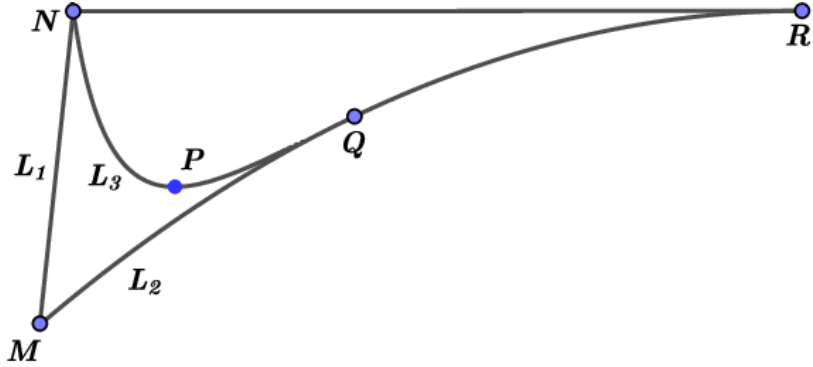}}\\
\caption{The curvilinear triangle MNR.}
\label{treug}
\end{figure}

\begin{equation}\label{eq.ec}
F(\alpha,\beta):=4\sin^2(\gamma)+\frac{4\sin(\gamma)\cos(\alpha+\gamma)}{\sin(\alpha)}\,p+\frac{p^2}{\sin^2(\alpha)}-1,
\end{equation}
where
$$
p=1-\sin\bigl(\alpha+\arcsin(2\sin(\alpha)\cos(\beta))\bigr),\quad \gamma=\arccot\left(\frac{\cos(\beta)-\cot(\alpha)}{1-\sin(\beta)}\right),
$$
and the set
$$
L_3=\left\{(\alpha,\beta)\in \Pi\,|\, F(\alpha,\beta)=0\right\}.
$$

Now we will find the intersection point $N=(\alpha_1,\beta_1)$ of the curve $L_1$ and the set $L_3$.
Substituting $\alpha=\beta/2+\pi/4$ into $\eqref{eq.ec}$,
we easily get that $\beta_1=\pi/3$ and, consequently, $\alpha_1=\pi/6+\pi/4=5\pi/12$.

Now we will find the intersection of the curve $L_2$ and the set $L_3$.
Substituting $\cos(\beta)=\frac{1}{2\sin(\alpha)}$ and $\sin(\beta)=\frac{\sqrt{4\sin^2(\alpha)-1}}{2\sin(\alpha)}$ into $\eqref{eq.ec}$ we get
$$
\frac{\sin(\alpha)\sqrt{3-4\cos^2(\alpha)}\Bigl(2\cos(\alpha)+3\Bigr)+4\cos^3(\alpha)+
8\cos^2(\alpha)-5\cos(\alpha)-5}
{\Bigl(\cos(\alpha)-\cos^2(\alpha)\Bigr)^{-1}\sin^2(\alpha) \Bigl(\sin(\alpha)\sqrt{3-4\cos^2(\alpha)}+\cos(\alpha)^2+\cos(\alpha)-2\Bigr)}=0.
$$

\begin{figure}[t]
%\vspace{10mm}
\begin{minipage}[h]{0.45\textwidth}
\center{\includegraphics[width=0.95\textwidth, trim=1mm 0mm 0mm 0mm, clip]{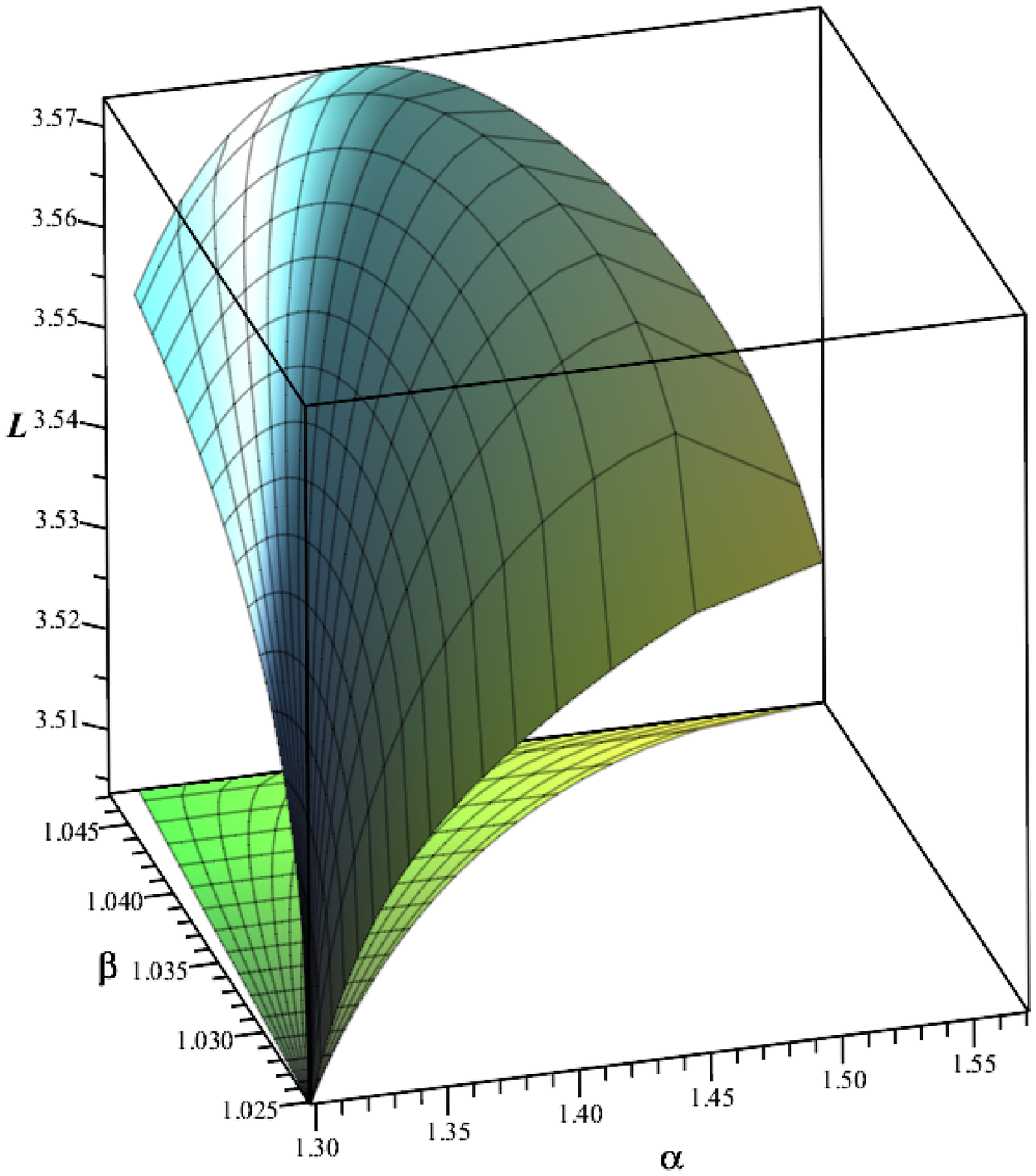}}
\end{minipage}
\hspace{1mm}
\begin{minipage}[h]{0.48\textwidth}
\center{\includegraphics[width=0.9\textwidth, trim=1mm 0mm 0mm 0mm, clip]{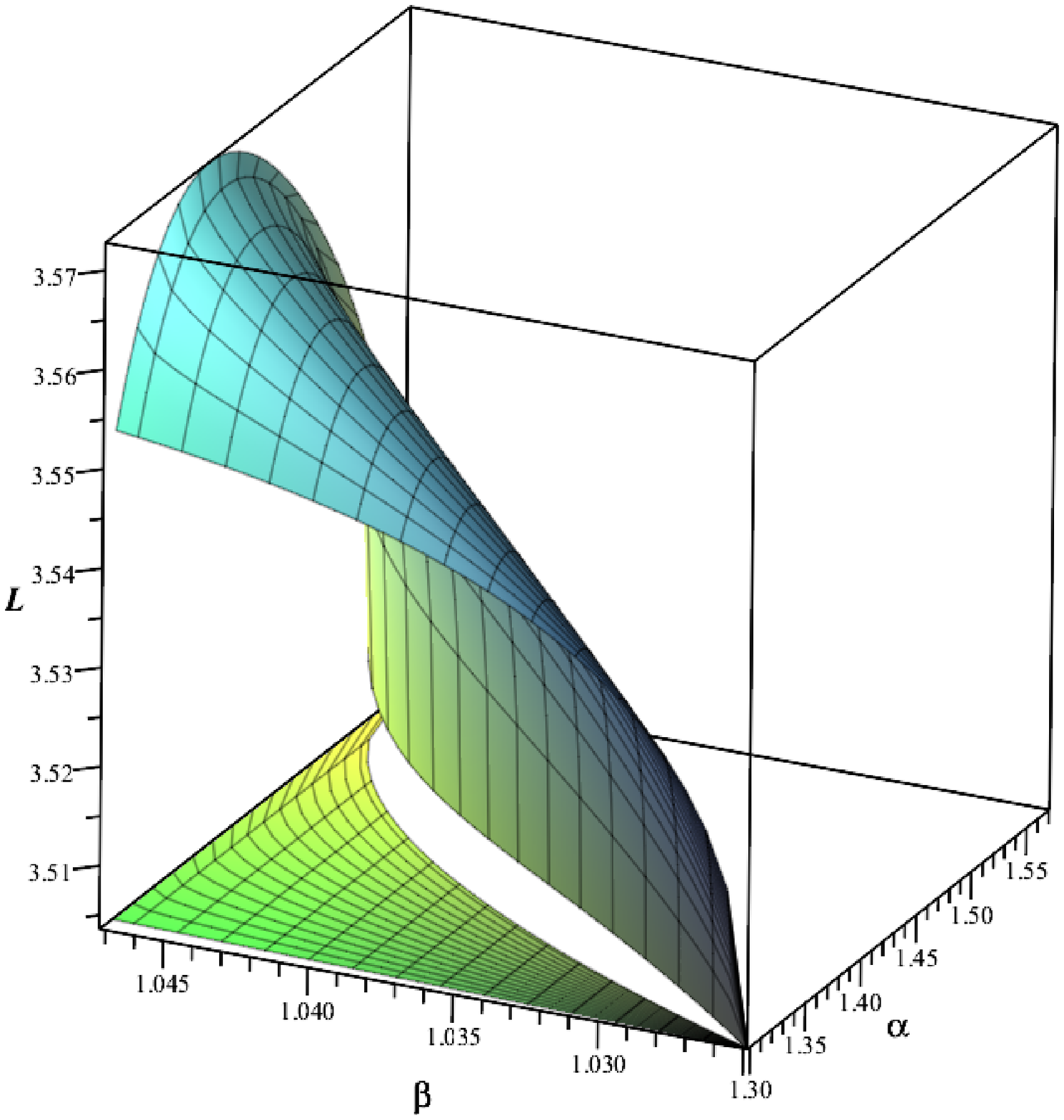}}
\end{minipage}
\caption{The perimeter $L$ of $ABCD$ over MNR.}
\label{3dtreug}
\end{figure}

Hence, either $\alpha=\pi/2$ or
$$
2\bigl(1-2\cos^2(\alpha)\bigr)\Bigl(4\cos^3(\alpha)+4\cos^2(\alpha)-7\cos(\alpha)+1 \Bigr)=0.
$$

By direct computations, we get that
$\alpha=\alpha_2:=\arccos(t')=1.4103331...$,
where $t'=0.1597754...$ is a unique root of the polynomial $f(t)=4t^3+4t^2-7t+1=0$ on the interval $(0,0.2)$.
Consequently, we obtain the intersection points $Q=(\alpha_2,\beta_2)$, where $\beta_2=\arccos\left(\frac{1}{2\sin(\alpha_2)}\right)=1.039667597...$, and $R=(\pi/2,\pi/3)$.

If we consider $h(\alpha):=F(\alpha,\pi/3)$ for $\alpha \in [\alpha_1=5\pi/12, \pi/2]$,
then $h(\alpha_1)=h(\pi/2)=0$ and $h(\alpha)<0$ for $\alpha \in (\alpha_1, \pi/2)$.
Let us suppose that there exist $(\alpha',\beta')\in \Pi$ such that $F(\alpha', \beta') \geq 0$ and $\beta' >\pi/3$.
Then, by the continuity,  there exists $(\alpha'',\beta'')\in \Pi$ such that $F(\alpha'', \beta'') = 0$ and $\beta'' >\pi/3$.
Without loss of generality, we can fix such a point $(\alpha'',\beta'')$ with the maximal possible value of $\beta''$.
Its clear that $\frac{\partial}{\partial \alpha} F(\alpha'',\beta'')=0$.
On the other hand, the system
\begin{equation}\label{eq.inter.diff}
F(\alpha'',\beta'')=\frac{\partial}{\partial \alpha} F(\alpha'',\beta'')=0
\end{equation}
has no solution in $\Pi$ with $\beta \geq \pi/3$.
Since we have $\alpha \geq \beta/2 +\pi/4$ by
\eqref{eq.L1.bound}, hence, we need to check only $(\alpha,\beta)\in [5\pi/12,\pi/2]\times [\pi/3,\pi/2]$.
By numerical calculations (one can use commands {\it Maximize} and {\it Minimize} for the {\it Optimization} package in Maple)
we get that the maximal value of $\frac{\partial}{\partial \alpha} F(\alpha,\beta)$
for $(\alpha,\beta)\in [5\pi/12,\pi/2]\times [11/10,\pi/2]$ is $-1.09119149... <0$.
On the other hand, the minimal value of $F(\alpha,\beta)$ for $(\alpha,\beta)\in [5\pi/12,\pi/2]\times [\pi/3, 6/5]$ is $0.79177743... >0$.
Since $6/5>11/10$, we obtain that system~\eqref{eq.inter.diff} has no solution for $(\alpha,\beta)\in [5\pi/12,\pi/2]\times [\pi/3,\pi/2]$.

Therefore, for all possible pair $(\alpha, \beta) \in \Pi$ we have $\beta \leq \pi/3$.
Thus we have constructed the (curvilinear) triangle
\begin{equation}\label{eq.curvtr.im}
MNR =\left\{ (\alpha,\beta)\,\,\left|\,\, \alpha \geq \beta/2+\pi/4, \,\,\pi/3 \geq \beta \geq \arccos \left( \frac{1}{2 \sin(\alpha)}\right)\right.\right\},
\end{equation}
where $NR$ is the straight line $\beta=\pi/3$ (see Fig. \ref{treug}).

By numerical calculations we get also that the minimum value
of the value $\beta$  on the set $L_3 \cap \Pi$ is $\beta_3=1.034584325...$, that corresponds to
$\alpha_3=1.34546261...$ (see the point $P=(\alpha_3,\beta_3)$ at Fig. \ref{treug}, this point gives a unique solution of \eqref{eq.inter.diff} in $\Pi$).

\medskip

Now, we consider only pair $(\alpha, \beta)$ from the curvilinear triangle $MNR$, since this inclusion follows from the equality $\delta(\bd(ABCD))=1$.

The results of numerical calculations show
that the perimeter $L(\bd(ABCD))$ (on the curvilinear triangle $MNR$) achieves its minimal value
provided that $\angle AGB =\angle AHD=\pi/2$, see Fig. \ref{3dtreug}.
This minimal value is $L(\bd(ABCD))=3.503591801...$. Hence, the quadrilateral $ABCD$ is not extremal.
\medskip

\begin{remark}
It should be noted that in Сase 3.3, in contrast to Сase 3.2, there are real quadrilaterals with all the necessary properties,
therefore, a lower estimate for the value of the perimeter is required.
\end{remark}

\begin{remark}
In Case 3.3 we ended the discussion with a reference to numerical calculations. This is sufficient, since the minimum value
of the perimeter in this case is separated from the hypothetical minimum value (obtained for magic kites) by a distance greater than $1/10$.
It is clear that instead of referring to numerical calculations, one can carry out perimeter estimates (with some margin),
using only symbolic calculations (the function being minimized is analytic and has good properties).
But this will take additional space in the paper, although it is in fact some technical point. The same remark is valid for the following Case 3.4.
\end{remark}

\subsection{Case 3.4}
Without loss of generality, we may suppose that $G_{\min}=\{E, G, F\}$,
where $E$ is an interior point of $[A,B]$,
$G$ is an interior point of $[B,C]$, $F$ is an interior point of $[A,D]$, and
$\delta(\bd(ABCD))=d(E,D)=d(E,C)=d(G,A)=d(G,D)=d(F,B)=d(F,C)=1$
(see Fig. \ref{case3_4}).
Moreover, $\angle AGB \leq \pi/2$, $\angle AFB \leq \pi/2$ and
$\angle AED \leq \pi/2$  by Proposition \ref{locmin_mu1}.

\begin{figure}[t]
\center{\includegraphics[width=0.375\textwidth, trim=0mm 1mm 0mm 0mm, clip]{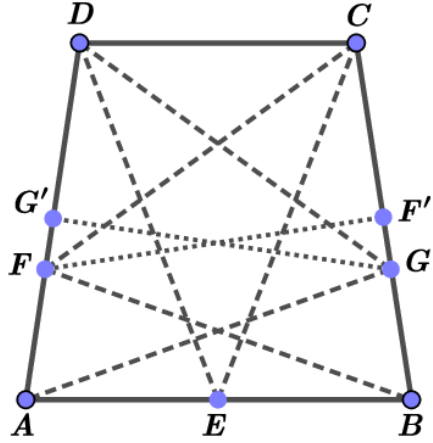}}\\
\caption{Case 3.4.}
\label{case3_4}
\end{figure}

Let us consider a slightly more general case in a suitable system of Cartesian coordinates.
We take the coordinates as follows: $E=(0,0)$,  $C=(\cos(\alpha),\sin(\alpha))$, $D=(-\cos(\alpha),\sin(\alpha))$, where $\alpha\in(0,\pi/2)$.
Let $k$ be a number such that the equation of the straight line  $AB$ is $y=kx$.
Without loss of generality, we may suppose that $k\geq0$.
Then $A=(x_1,kx_1)$, $B=(x_2,kx_2)$ and  $x_1<0<x_2$.

If $\alpha \leq \pi/4$, then $\angle DEC\geq \pi/2$ and we obtain the following simple estimate for the perimeter:
$L(\bd(DEC))\geq 2+\sqrt{2}$. Hence, we have  $L(\bd(ABCD)) > 2+\sqrt{2}$, that is impossible for an extremal quadrilateral. Thus, $\alpha \geq \pi/4$.

Since $\angle DEB\geq\pi/2$ then $(\overrightarrow{ED},\overrightarrow{EB})\leq0$. It implies
$-\cos(\alpha)x_2+\sin(\alpha)kx_2\leq0$ or, equivalently, $k\leq\frac{\cos(\alpha)}{\sin(\alpha)}=\cot(\alpha)$.
Therefore, $0\leq k\leq \cot(\alpha)$.

It is easy to see that
$$
L(\bd(ABCD))=d(A,B)+d(B,C)+d(C,D)+d(D,A)=(x_2-x_1)\sqrt{1+k^2}+2\cos(\alpha)
$$
$$
+\sqrt{(x_2-\cos(\alpha))^2+(kx_2-\sin(\alpha))^2}+\sqrt{(x_1+\cos(\alpha))^2+(kx_1-\sin(\alpha))^2}.
$$
%\begin{eqnarray*}
%L(\bd(ABCD))&=&d(A,B)+d(B,C)+d(C,D)+d(D,A)\\
%&=&(x_2-x_1)\sqrt{1+k^2}+2\cos(\alpha)+\sqrt{(x_2-\cos(\alpha))^2+(kx_2-\sin(\alpha))^2}\\
%&+&\sqrt{(x_1+\cos(\alpha))^2+(kx_1-\sin(\alpha))^2}.
%\end{eqnarray*}

In particular, $L(\bd(ABCD))\geq x_2-x_1=|x_2-x_1|$.
Since $G$ is a point of $[B,C]$ and $F$ is a point of $[A,D]$, there are  $t\in[0,1]$ and $s\in[0,1]$ such that
\begin{eqnarray*}
F&=&\bigl(tx_1-(1-t)\cos(\alpha),\,tkx_1+(1-t)\sin(\alpha)\bigr),\\
G&=&\bigl(sx_2+(1-s)\cos(\alpha),\,skx_2+(1-s)\sin(\alpha)\bigr).
\end{eqnarray*}

Let $F'$ be the midpoint of $[B,C]$ and $G'$ be the midpoint of $[A,D]$. Then
$$
G'=\frac{1}{2}\bigl(x_1-\cos(\alpha),kx_1+\sin(\alpha)\bigr),\quad
F'=\frac{1}{2}\bigl(x_2+\cos(\alpha),kx_2+\sin(\alpha)\bigr).
$$

Since the straight line $G'G$ ($F'F$) is orthogonal to the straight line $AD$ (respectively, $BC$) then
$$
(\overrightarrow{G'G},\overrightarrow{AD})=0\quad\text{and}\quad(\overrightarrow{F'F},\overrightarrow{BC})=0.
$$
Solving these linear equations with respect to $s$ and $t$, we get some explicit expressions
$$
s=g(x_1,x_2,k,\alpha),\quad
t=f(x_1,x_2,k,\alpha).
$$

Since $d(G,D)=d(G,A)=1$ and $d(F,C)=d(F,B)=1$, we have the equations
\begin{equation*}
d(G,D)=1,\quad d(F,C)=1,
\end{equation*}
with respect to $x_1,x_2,k,\alpha$. Thus, here we have four variables and two constraints, that is, roughly speaking, two degrees of freedom.
Recall also that $x_1<0<x_2$, $\alpha\in [\pi/4,\pi/2)$, $0\leq k\leq \cot(\alpha)$, moreover, $s=g(x_1,x_2,k,\alpha), t=f(x_1,x_2,k,\alpha) \in [0,1]$,
and we can assume that $|x_2-x_1|\leq 2+\sqrt{2}$
(otherwise the perimeter of $ABCD$ is at least $2+\sqrt{2}$, and $ABCD$ is not extremal).

The results of numerical calculations show
that the perimeter $L\bigl(\bd(ABCD)\bigr)$ for the quadrilaterals under consideration reaches its minimal value
exactly for $\angle AED =\angle AFB=\pi/2$.
This minimal value is $L(\bd(ABCD))=3.510690988...$.
Hence, the quadrilateral $ABCD$ is not extremal.

\section{Case 4}\label{sect.5}

Let us suppose that $\|G_{\min}\|=4$ and $G_{\min}=\{E, F, G, H\}$.
Without loss of generality we may suppose that
$E\in [A,D]$, $F\in [A,B]$,  $G\in [B,C]$ and $H\in [C,D]$.
In all subsaces of this case we have only one degree of freedom.

Recall that a self Chebyshev center $x$ for $\bd(ABCD)$ is {\it simple} if $\|{\mathcal F}(x)\|=1$ and {\it non-simple}
if $\|{\mathcal F}(x)\|\geq 2$ (see (\ref{eq.fath1})).

\begin{figure}[t]
\begin{minipage}[h]{0.31\textwidth}
\center{\includegraphics[width=0.99\textwidth]{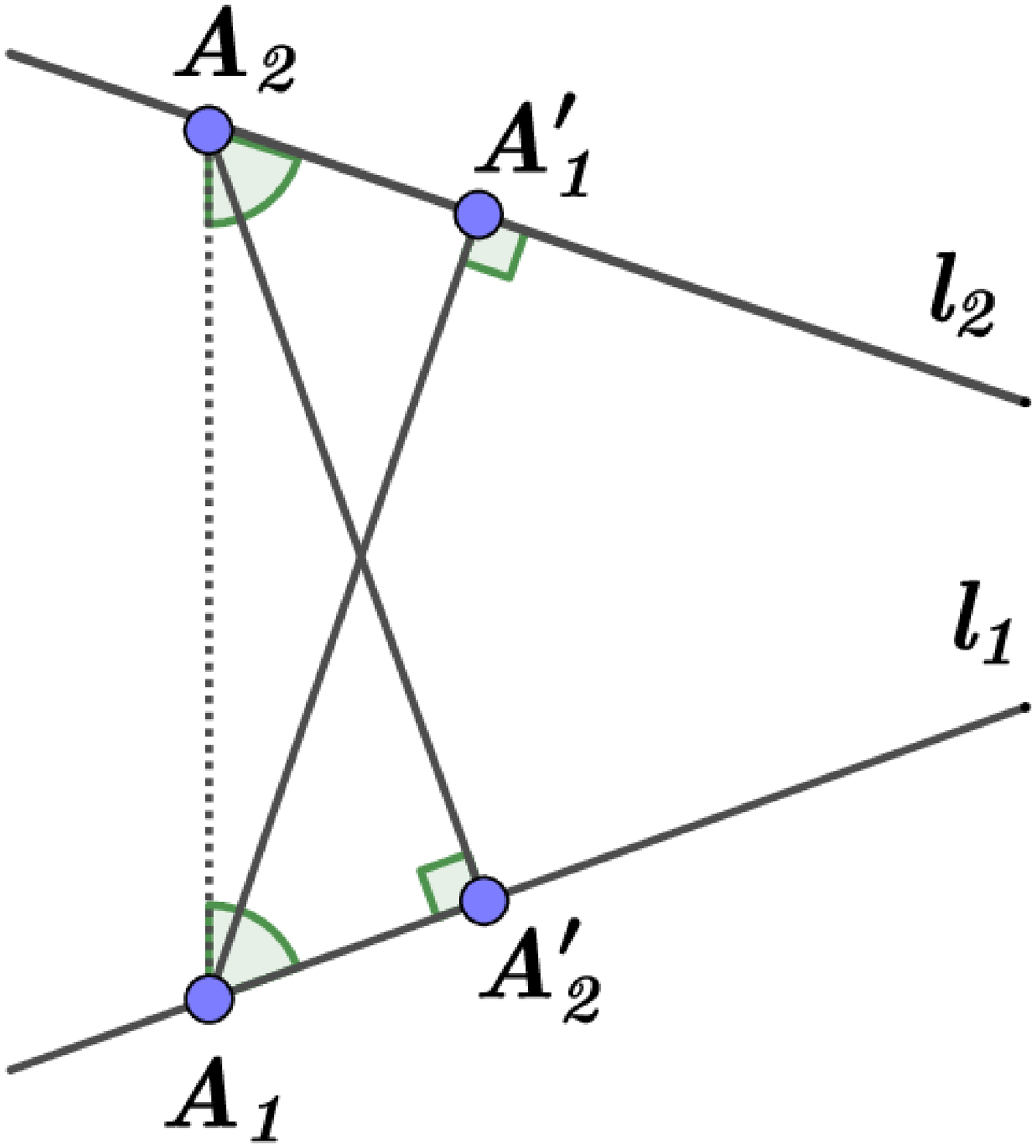}}   a) \\
\end{minipage}
\quad\quad
\begin{minipage}[h]{0.38\textwidth}
\center{\includegraphics[width=0.99\textwidth]{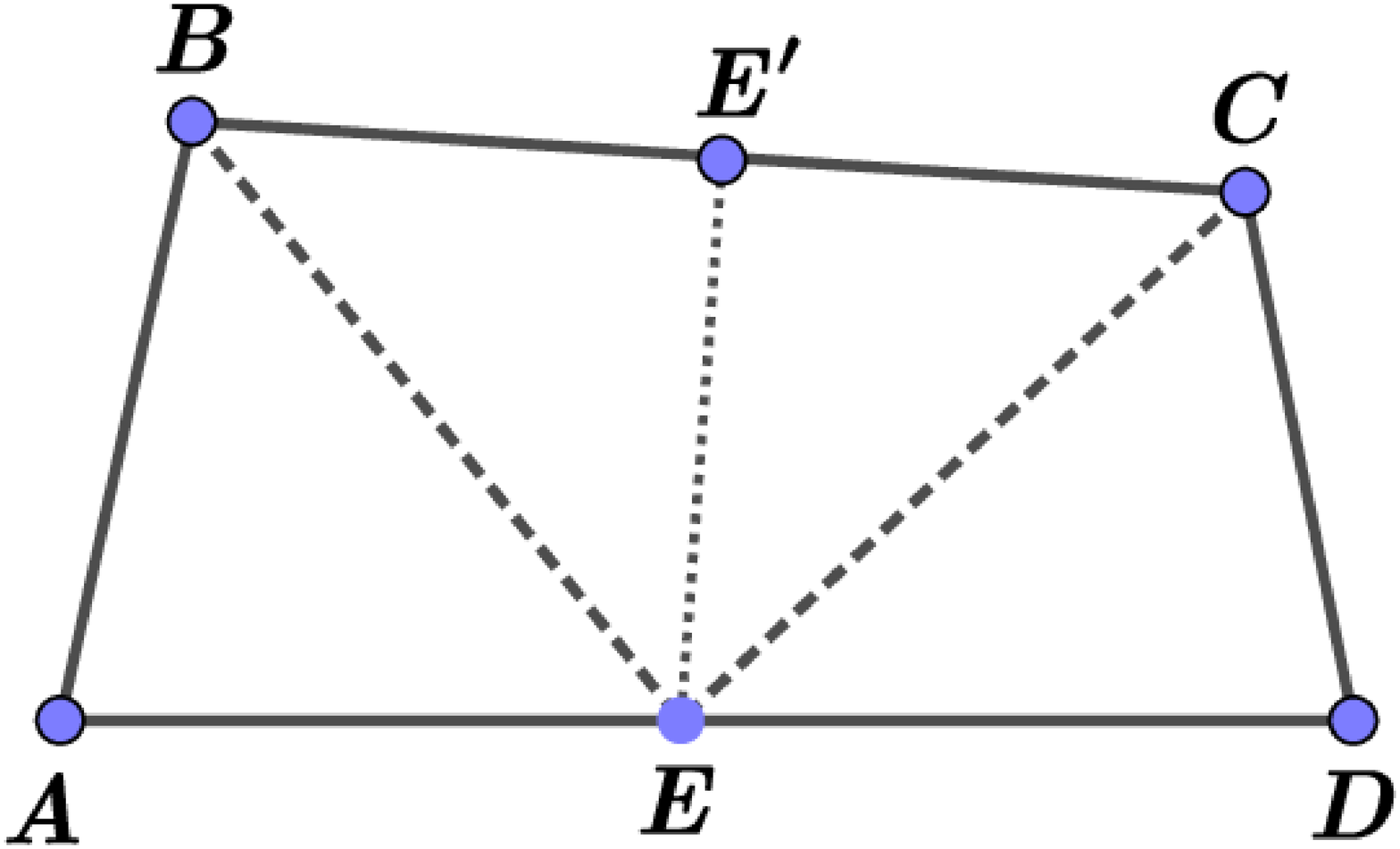}} b) \\
\end{minipage}
\caption{a) Two straight lines; b) A ``thin'' quadrilateral.}
\label{case4}
\end{figure}

\bigskip

Now, we are going to study a special case that we will call Case 4.0.

\subsection{Case 4.0.}
We consider a quadrilateral $ABCD$ with $\delta(\bd(ABCD))=1$ and $\|G_{\min}\|=4$, such that there are two opposite sides of $ABCD$ with
non-simple self Chebyshev centers of $\bd(ABCD)$. We will prove that such a quadrilateral can not be extremal. In other words, the following
proposition holds.

\begin{prop}\label{pro.case4.2nsimple}
Suppose that a quadrilateral $ABCD$ with $\delta(\bd(ABCD))=1$ is extremal and $\|G_{\min}\|=4$. Then there are no two opposite sides of $ABCD$
such that each of them has a non-simple self Chebyshev center of $\bd(ABCD)$.
\end{prop}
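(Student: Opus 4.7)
The plan is to contradict extremality by showing that any quadrilateral satisfying the hypotheses has perimeter strictly greater than $\frac{4}{3}\sqrt{3+2\sqrt{3}}$. WLOG the two opposite sides carrying non-simple self Chebyshev centers are $[A,D]$ (containing $E$) and $[B,C]$ (containing $G$), while $F\in[A,B]$ and $H\in[C,D]$ are the two remaining self Chebyshev centers. First, I would apply Proposition \ref{pr_extr2} to $E$ and $G$: it forces $\mathcal{F}(E)\subseteq\{B,C\}$ and $\mathcal{F}(G)\subseteq\{A,D\}$, and the non-simplicity assumption upgrades these inclusions to equalities, yielding $d(E,B)=d(E,C)=d(G,A)=d(G,D)=1$. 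In particular, $E$ lies on the perpendicular bisector of $[B,C]$ and $G$ on the perpendicular bisector of $[A,D]$.

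Next I would check that both $\angle BEC$ and $\angle AGD$ are strictly less than $\pi/2$. If $\angle BEC\geq\pi/2$, the isoceles triangle $\triangle BEC$ with legs of length $1$ is contained in $ABCD$ and has perimeter at least $2+\sqrt{2}>\frac{4}{3}\sqrt{3+2\sqrt{3}}$, so Proposition \ref{monotper} (or plain convexity) gives $L(\bd(ABCD))\geq 2+\sqrt{2}$, contradicting extremality; the same argument handles $\angle AGD$. This is exactly the sort of triangle-inclusion bound already used in Proposition \ref{pr.simple1} and Corollary \ref{cor.simple1}.

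Third, I would exploit the perpendicular-bisector structure by placing $A=(-a,0)$, $D=(a,0)$, $E=(e,0)$ with $|e|\leq a$, and $G=(0,\sqrt{1-a^{2}})$, so that $d(G,A)=d(G,D)=1$ becomes automatic. Writing $B=(e+\cos\beta,\sin\beta)$ and $C=(e+\cos\gamma,\sin\gamma)$ on the unit circle around $E$, the requirement that $G$ lies on $[B,C]$ fixes the barycentric parameter and links $\beta,\gamma,e,a$, reducing the admissible family to a small number of parameters. The local-minimum conditions at $E$ and $G$ from Proposition \ref{locmin_mu1}, together with $\mu(F)=\mu(H)=1$ (so $\mathcal{F}(F)\subseteq\{C,D\}$ and $\mathcal{F}(H)\subseteq\{A,B\}$ by Proposition \ref{pr_extr2}, with Corollary \ref{cor.far1} providing orthogonality when $F$ or $H$ is simple), further restrict the parameters. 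A useful touchstone is the most symmetric (rectangular) configuration in this case: imposing both $a^{2}+h^{2}=1$ and $4a^{2}+h^{2}/4=1$ forces $a=1/\sqrt{5}$, $h=2/\sqrt{5}$, giving perimeter $8/\sqrt{5}\approx 3.578$, comfortably above $3.389946\ldots$.

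Finally, I would bound the perimeter from below over the resulting low-dimensional parameter space, in the spirit of Cases 3.3 and 3.4: symbolically where feasible and by numerical optimization with a safety margin elsewhere, splitting into subcases according to whether $F$ and $H$ are simple or non-simple and checking convexity in each. The main obstacle is precisely this bookkeeping: the positions of $F,H$ depend nontrivially on the quadrilateral, so each subcase produces its own system of analytic constraints, and one must verify uniformly that the perimeter stays strictly above $\frac{4}{3}\sqrt{3+2\sqrt{3}}$. Because the symmetric rectangular configuration already exceeds the target by roughly $0.19$, one expects a comfortable margin throughout the admissible family, which suggests that numerical verification (as in Cases 3.3 and 3.4) will suffice.
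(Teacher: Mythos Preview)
Your plan is plausible and contains no obvious error, but it takes a markedly different route from the paper and defers the real work to an unspecified numerical search. The paper never touches the two remaining centers $F$ and $H$ at all. With the two non-simple centers (call them, as in the paper, $F\in[A,B]$ and $E\in[C,D]$, so that $d(F,C)=d(F,D)=d(E,A)=d(E,B)=1$), it draws the chord $[E,F]$ and splits $ABCD$ into the convex sub-quadrilaterals $FBCE$ and $AFED$. The point is that each half has \emph{both diagonals of length~$1$}: for $FBCE$ the diagonals are $[E,B]$ and $[F,C]$, and for $AFED$ they are $[E,A]$ and $[F,D]$. A folklore result (Lemma~\ref{le.le8}, proved by a one-line translation trick) says that among all quadrilaterals with given diagonal lengths and given angle $\theta$ between the diagonals, the parallelogram has minimal perimeter; its Corollary~\ref{olymp} then yields $L(\bd(FBCE))\ge 2\sqrt{2}\sin(\theta_1/2+\pi/4)$ and the analogous bound for $AFED$. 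Summing, subtracting the shared chord, and observing that $\theta_1+\theta_2=\varphi_1+\varphi_2\ge d(A,B)+d(C,D)$ (where $\varphi_1=\angle AEB$, $\varphi_2=\angle CFD$), one gets a uniform lower bound of about $3.57$ whenever $d(A,B)+d(C,D)\ge 1/\sqrt{2}$. The thin complementary regime is dispatched separately by the short Lemma~\ref{le.10}.

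So the idea you are missing is that the four unit segments $EA$, $EB$, $FC$, $FD$ supplied by the two non-simple centers are already enough, once read as diagonals of the two half-quadrilaterals; the other two Chebyshev centers are irrelevant for this proposition. Your parametrize-and-minimize scheme could in principle succeed---the margin is comfortable, as your rectangular touchstone $8/\sqrt{5}$ confirms---but it would require actually carrying each of the simple/non-simple subcases for $F$ and $H$ through a concrete computation, and you have not done that. The paper's decomposition replaces all of that bookkeeping with two applications of a three-line lemma.
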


To prove this result, we need some lemmas.

\begin{lemma}
Let $l_1, l_2$ are straight lines with points $A_1\in l_1$, $A_2\in l_2$, $A'_1\in l_2$, $A'_2\in l_1$
that $A_1A'_1 \perp l_2$, $A_2A'_2 \perp l_1$ and $d(A_1,A'_1)=d(A_2,A'_2)$ (see the left panel of Fig. \ref{case4}).
If $[A_1,A'_1]\cap [A_2,A'_2]=\emptyset$ then $A_1A'_1A_2A'_2$ is rectangle, otherwise $A_1A_2A'_1A'_2$
is isosceles trapezoid.
\end{lemma}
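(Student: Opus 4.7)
The plan is to split on whether $l_1$ and $l_2$ are parallel. If $l_1 \parallel l_2$, then every line perpendicular to $l_2$ is also perpendicular to $l_1$, and both perpendicular segments $[A_1,A_1']$ and $[A_2,A_2']$ realize the common distance between the two lines, so the equality $d(A_1,A_1')=d(A_2,A_2')$ holds automatically. These two segments are parallel (both perpendicular to the common direction), have equal length, and do not intersect by hypothesis. Combined with $A_1, A_2' \in l_1$ and $A_1', A_2 \in l_2$, this forces $A_1 A_1' A_2 A_2'$ to be a parallelogram; since adjacent sides are perpendicular, it is a rectangle.

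If instead $l_1$ and $l_2$ meet at a point $O$, let $\alpha$ be the acute angle between them. In each right triangle $\triangle OA_iA_i'$ (with right angle at $A_i'$), the leg opposite $O$ equals $d(O,A_i)\sin\alpha$, so equality of these legs forces $d(O,A_1)=d(O,A_2)=:r$; the other leg $d(O,A_i')=d(O,A_i)\cos\alpha$ then gives $d(O,A_1')=d(O,A_2')=:s$. In the configuration where $[A_1,A_1']\cap[A_2,A_2']\neq\emptyset$, $A_1$ and $A_2$ lie on rays from $O$ meeting at angle $\angle A_1 O A_2$; the chord $[A_1,A_2]$ is the base of the isosceles triangle $\triangle O A_1 A_2$ and makes angle $(\pi-\angle A_1 O A_2)/2$ with $l_1$, and the chord $[A_2',A_1']$ has the same property for $\triangle O A_2' A_1'$. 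Hence $[A_1,A_2] \parallel [A_2',A_1']$, and the remaining sides $[A_2,A_1'] \subset l_2$ and $[A_2',A_1] \subset l_1$ both have length $|r-s|$, so $A_1 A_2 A_1' A_2'$ is an isosceles trapezoid.

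The main technical point I expect to need care with is the correspondence between the two dichotomies — perpendicular segments intersecting or not, versus the lines being parallel or meeting — and verifying that the intended vertex ordering ($A_1 A_1' A_2 A_2'$ in the first case, $A_1 A_2 A_1' A_2'$ in the second) indeed yields a simple quadrilateral. This reduces to checking that the feet $A_1', A_2'$ lie inside the angle at $O$ determined by the rays through $A_1$ and $A_2$, which follows since each foot is by definition the nearest point on the opposite line; the rest is routine geometric bookkeeping.
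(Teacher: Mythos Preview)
Your argument is correct in each branch, but the route differs from the paper's. The paper gives a one-line proof: the right triangles $A_1A_1'A_2$ and $A_2A_2'A_1$ share the hypotenuse $[A_1,A_2]$ and have equal legs $d(A_1,A_1')=d(A_2,A_2')$, hence are congruent, which yields $\angle A_2A_1A_2'=\angle A_1A_2A_1'$; this single equality of the angles that $l_1,l_2$ make with $[A_1,A_2]$ immediately forces the rectangle or the isosceles trapezoid according to the two configurations. Your approach instead splits on whether $l_1\parallel l_2$ and, in the intersecting case, uses the trigonometric identities $d(A_i,A_i')=d(O,A_i)\sin\alpha$ to get $d(O,A_1)=d(O,A_2)$ and hence the reflection symmetry in the bisector at $O$. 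Both arguments reach the same conclusion; the paper's congruence is shorter and treats the two cases uniformly, while yours is more explicit and, to your credit, flags the subtle point the paper glosses over --- namely, that the dichotomy ``segments intersect / do not intersect'' is not literally the same as ``lines intersect / are parallel''. Your proposed resolution of that point (``each foot is the nearest point on the opposite line'') does not by itself rule out the configuration where $l_1\cap l_2=\{O\}$ with $A_1,A_2$ on ``opposite'' rays, in which the perpendicular segments are disjoint yet $A_1A_1'A_2A_2'$ is not a rectangle; however, in that configuration $A_1A_1'A_2A_2'$ is self-intersecting, so it is excluded once one reads ``quadrilateral'' as simple, which is how the paper (and its figure) uses the lemma.
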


\begin{proof}
The triangle $A_1A'_1A_2$  is congruent to triangle $A_2A'_2A_1$
in the common hypotenuse ($[A_1,A_2]$) and legs $(d(A_1,A'_1)=d(A_2,A'_2))$.
Hence $\angle A_2A_1A'_2=\angle A_1A_2A'_1$.
It implies that if $[A_1,A'_1]\cap [A_2,A'_2]=\emptyset$ then $A_1A'_1A_2A'_2$ is rectangle, otherwise $A_1A_2A'_1A'_2$ is isosceles trapezoid.
\end{proof}

\begin{lemma}\label{le.10}
Let $ABCD$ be a quadrilateral with $\delta(\bd(ABCD)) = 1$ for the self
Chebyshev radius of its boundary. If $d(A,B)\leq \frac{1}{\sqrt{2}}$ and $d(C,D)\leq \frac{1}{\sqrt{2}}$ then
either \linebreak $L(\bd(ABCD))>2+\sqrt{2}$ or $G_{\min} \cap [A,D]=\emptyset$ and $G_{\min} \cap [B,C]=\emptyset$.
\end{lemma}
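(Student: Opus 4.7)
The plan is to show that the existence of any point in $G_{\min}\cap([A,D]\cup[B,C])$ forces $L(\bd(ABCD))>2+\sqrt{2}$. Under the relabeling $A\leftrightarrow B$, $C\leftrightarrow D$, the sides $[A,B]$ and $[C,D]$ are preserved (so the metric hypothesis is preserved) while $[A,D]$ and $[B,C]$ are swapped, so by symmetry I may assume $H\in G_{\min}\cap[A,D]$. If $H$ coincides with the vertex $A$ or $D$, Corollary~\ref{cor.simple1} gives $L\geq 2+\sqrt{2}$, and the strict form of Proposition~\ref{monotper} upgrades this to a strict inequality because the right triangle from the proof of Proposition~\ref{pr.simple1} is a proper subset of the non-degenerate quadrilateral. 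So I may place $H$ in the open interior of $[A,D]$, with coordinates $H=(0,0)$, $A=(-a,0)$, $D=(d,0)$, $a,d\in(0,1]$, and $B,C$ strictly in the upper half-plane.

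The crux is an angular confinement for any of $B,C$ that lies in $\mathcal{F}(H)$. Writing such a vertex as $V=(\cos\theta_V,\sin\theta_V)$ and expanding $d(A,B)^2=1+a^2+2a\cos\theta_B\leq 1/2$ gives $\cos\theta_B\leq -(a/2+1/(4a))$; the AM-GM bound $t/2+1/(4t)\geq 1/\sqrt{2}$ then forces $\theta_B\in[3\pi/4,\pi)$, and a symmetric computation gives $\theta_C\in(0,\pi/4]$. In particular $B\in\mathcal{F}(H)$ always lies on the $A$-side of the perpendicular to $[A,D]$ at $H$ and $C\in\mathcal{F}(H)$ on the $D$-side. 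Combined with Proposition~\ref{locmin_mu1} (which demands vertices of $\mathcal{F}(H)$ on both sides of that perpendicular), a short enumeration shows that the only essential configurations are $\mathcal{F}(H)\supseteq\{B,C\}$, $\mathcal{F}(H)\supseteq\{A,D\}$ with $\{B,C\}\not\subseteq\mathcal{F}(H)$, and $\mathcal{F}(H)=\{A,C\}$ or $\{B,D\}$.

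In Case I, $\{B,C\}\subseteq\mathcal{F}(H)$: the angle bounds give $\angle BHC=\theta_B-\theta_C\geq\pi/2$, hence $d(B,C)=2\sin(\angle BHC/2)\geq\sqrt{2}$. The triangle inequalities $d(A,B)+d(A,H)\geq d(B,H)=1$ and $d(D,C)+d(H,D)\geq d(H,C)=1$ are strict because $B,C$ lie off the $x$-axis; summing and using $d(A,H)+d(H,D)=d(A,D)$ gives $d(A,B)+d(A,D)+d(C,D)>2$, and adding $d(B,C)\geq\sqrt{2}$ yields $L>2+\sqrt{2}$. In Case II, $\{A,D\}\subseteq\mathcal{F}(H)$ with $\{B,C\}\not\subseteq\mathcal{F}(H)$: here $d(A,D)=2$, and applying Proposition~\ref{monotper} to $\triangle ABD\subset ABCD$ together with the strict triangle inequality $d(A,B)+d(B,D)>d(A,D)$ (since $B$ is not on line $AD$) gives $L\geq L(\bd(\triangle ABD))>4>2+\sqrt{2}$. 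In Case III, $\mathcal{F}(H)=\{A,C\}$ (the pair $\{B,D\}$ is handled by the same argument with $B$ and $D$ interchanged with $C$ and $A$): we have $a=1$ and $d(H,C)=1$, while $\theta_C\in(0,\pi/4]$ yields $d(A,C)=2\cos(\theta_C/2)\geq 2\cos(\pi/8)=\sqrt{2+\sqrt{2}}$; applying Proposition~\ref{monotper} to $\triangle AHC\subset ABCD$ gives $L\geq 1+1+d(A,C)\geq 2+\sqrt{2+\sqrt{2}}>2+\sqrt{2}$.

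The main technical point is the angular-confinement step of paragraph two: it is what converts the metric hypothesis $d(A,B),d(C,D)\leq 1/\sqrt{2}$ into the sector constraint placing $B$ and $C$ antipodal to $A$ and $D$ on $S(H,1)$, and it produces both critical constants $\sqrt{2}$ (used in Case I) and $\sqrt{2+\sqrt{2}}$ (used in Case III). It also excludes the a priori configurations $\mathcal{F}(H)\in\{\{A,B\},\{C,D\}\}$, because the sector bound would place both vertices on the same side of the perpendicular, violating Proposition~\ref{locmin_mu1}. Once this bound is in hand, the rest is a routine combination of triangle inequalities and perimeter monotonicity.
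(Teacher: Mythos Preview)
Your proof is correct and, in fact, more complete than the paper's. Both arguments reduce to showing that a self Chebyshev center $H$ on $[A,D]$ forces $L>2+\sqrt{2}$, and in the principal configuration $\{B,C\}\subseteq\mathcal F(H)$ both obtain $d(B,C)\geq\sqrt 2$. The mechanisms differ: the paper drops perpendiculars from $A,E,D$ to the line $BC$ and uses that the distance to a line is affine along $[A,D]$, so $d(E,\text{line }BC)\le\max\{d(A,B),d(C,D)\}\le 1/\sqrt2$, whence $d(B,C)=2\sqrt{1-d(E,E')^2}\ge\sqrt2$; you instead parametrize $B,C$ on $S(H,1)$ and use the AM--GM bound $a/2+1/(4a)\ge 1/\sqrt2$ to confine $\theta_B\in[3\pi/4,\pi)$, $\theta_C\in(0,\pi/4]$, getting $\angle BHC\ge\pi/2$ directly. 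Your final perimeter estimate in this case also avoids the inscribed triangle $BEC$ used in the paper, summing side lengths via two strict triangle inequalities instead.

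The more substantive difference is coverage. The paper passes from $\|\mathcal F(E)\|\ne 1$ straight to $\mathcal F(E)=\{B,C\}$, which as stated in the lemma (no extremality hypothesis) leaves the possibilities $A\in\mathcal F(E)$ or $D\in\mathcal F(E)$ unaddressed; in the paper this is harmless because the lemma is only applied to extremal quadrilaterals, where Proposition~\ref{pr_extr2} rules those out. Your Case~II ($\{A,D\}\subseteq\mathcal F(H)$, giving $d(A,D)=2$) and Case~III ($\mathcal F(H)=\{A,C\}$ or $\{B,D\}$, giving an inscribed isosceles triangle of perimeter at least $2+\sqrt{2+\sqrt2}$) close this gap cleanly, so your argument proves the lemma in the generality in which it is stated. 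The angular-confinement step does double duty here: it both yields the constant $\sqrt{2+\sqrt2}$ in Case~III and eliminates $\mathcal F(H)=\{A,B\}$ or $\{C,D\}$ outright.
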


\begin{proof}
Suppose that $E\in G_{\min}\cap [A,D]$.
If $\|{\mathcal F}(E)\|=1$ then ${\mathcal F}(E)=\{B\}$ or ${\mathcal F}(E)=\{C\}$. Hence $d(B,E)=1$ and $BE \perp [A,D]$
or $d(C,E)=1$ and $CE \perp [A,D]$.
But this is not possible because $\frac{1}{\sqrt{2}}\geq d(A,B)\geq d(B,E)=1$ or
$\frac{1}{\sqrt{2}}\geq d(C,D)\geq d(C,E)=1$. Thus, $\|{\mathcal F}(E)\|=2$ and ${\mathcal F}(E)=\{B,C\}$ (see the right panel Fig. \ref{case4}).

Let us consider the perpendiculars from the points $A,E,D$ to the straight line $BC$.
Denote by $A',E',D'\in BC$ the foots of these perpendiculars, respectively.
Since $d(A,A')\leq d(A,B)\leq \frac{1}{\sqrt{2}}$ and $d(D,D')\leq d(D,C)\leq \frac{1}{\sqrt{2}}$
then $d(E,E')\leq \max\{d(A,A'),d(D,D')\}\leq \frac{1}{\sqrt{2}}$.
It implies $d(B,E')=d(C,E')=\sqrt{1-d^2(E,E')}\geq  \frac{1}{\sqrt{2}}$.
Therefore, $L(\bd(ABCD))>L(\bd(BEC))\geq 2+\sqrt{2}$.

The same arguments show that $G_{\min} \cap [B,C]\neq \emptyset$  also implies $L(\bd(ABCD))> 2+\sqrt{2}$.
\end{proof}
\smallskip

The assertion of the following lemma is well known and belongs to folklore. At one time it was proposed as an Olympiad problem in mathematics
(see \cite[1997 Olympiad, Level~C, Problem~2]{Olymp}).

\begin{lemma}\label{le.le8}
Among quadrilaterals $ABCD$ with given diagonal lengths and the angle~$\theta$ between them, a parallelogram has the smallest perimeter.
\end{lemma}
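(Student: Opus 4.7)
The plan is to apply the triangle inequality (Minkowski in $\mathbb{R}^2$) twice, once to each pair of opposite sides of $ABCD$. Let $O$ be the intersection point of the diagonals, put $a=|OA|$, $c=|OC|$, $b=|OB|$, $d=|OD|$, so that $a+c=p$ and $b+d=q$ where $p,q$ are the given diagonal lengths, and choose unit vectors $\mathbf{e}_1$ and $\mathbf{e}_2$ along $\overrightarrow{OC}$ and $\overrightarrow{OD}$ respectively, so that $(\mathbf{e}_1,\mathbf{e}_2)=\cos\theta$. Placing the origin at $O$, the vertices become $A=-a\mathbf{e}_1$, $B=-b\mathbf{e}_2$, $C=c\mathbf{e}_1$, $D=d\mathbf{e}_2$, and direct subtraction yields
\begin{align*}
\overrightarrow{AB}&=a\mathbf{e}_1-b\mathbf{e}_2, & \overrightarrow{DC}&=c\mathbf{e}_1-d\mathbf{e}_2,\\
\overrightarrow{BC}&=c\mathbf{e}_1+b\mathbf{e}_2, & \overrightarrow{AD}&=a\mathbf{e}_1+d\mathbf{e}_2.
\end{align*}

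Pairing opposite sides and applying $\|u\|+\|v\|\ge\|u+v\|$ in the plane gives
\begin{align*}
|AB|+|CD| &\ge \|p\mathbf{e}_1-q\mathbf{e}_2\|=\sqrt{p^2+q^2-2pq\cos\theta},\\
|BC|+|DA| &\ge \|p\mathbf{e}_1+q\mathbf{e}_2\|=\sqrt{p^2+q^2+2pq\cos\theta},
\end{align*}
so that $L(\bd(ABCD))\ge\sqrt{p^2+q^2-2pq\cos\theta}+\sqrt{p^2+q^2+2pq\cos\theta}$. For the parallelogram with the same diagonal data one has $a=c=p/2$ and $b=d=q/2$, which makes each of the four bounding vectors above exactly half of the corresponding resultant $p\mathbf{e}_1\pm q\mathbf{e}_2$; hence $|AB|=|CD|=\tfrac{1}{2}\sqrt{p^2+q^2-2pq\cos\theta}$ and $|BC|=|DA|=\tfrac{1}{2}\sqrt{p^2+q^2+2pq\cos\theta}$, so the parallelogram perimeter equals the above lower bound.

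For the equality/uniqueness step I would invoke the standard equality case of Minkowski: $\|u\|+\|v\|=\|u+v\|$ iff $u$ and $v$ are nonnegative scalar multiples of one another. The first pair then forces $a/c=b/d$, the second $c/a=b/d$; multiplying these relations gives $\lambda^2=1$ for the common ratio, whence $a=c$ and $b=d$, so equality holds precisely when $ABCD$ is a parallelogram.

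No serious obstacle is anticipated: once the correct coordinate frame at $O$ is fixed, the argument is essentially bookkeeping. The only delicate point is to verify the four side-vector expressions with the right signs so that the two pairs of opposite sides assemble neatly into $p\mathbf{e}_1\pm q\mathbf{e}_2$, which is where both Minkowski inequalities become equalities simultaneously exactly in the parallelogram configuration.
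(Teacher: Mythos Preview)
Your proof is correct and is essentially the same argument as the paper's: both pair the opposite sides and apply the triangle inequality so that the two resultant vectors have lengths $\sqrt{p^2+q^2\pm 2pq\cos\theta}$. The paper packages this via a translation of $ABCD$ by $\overrightarrow{AC}$ (so that the two triangle inequalities read $d(B,C)+d(C,D')\ge d(B,D')$ and $d(B',C)+d(C,D)\ge d(B',D)$), while you do it in coordinates at the diagonal intersection; the content is identical, and you additionally spell out the equality case.
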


\begin{figure}[t]
\begin{minipage}[h]{0.45\textwidth}
\center{\includegraphics[width=0.99\textwidth]{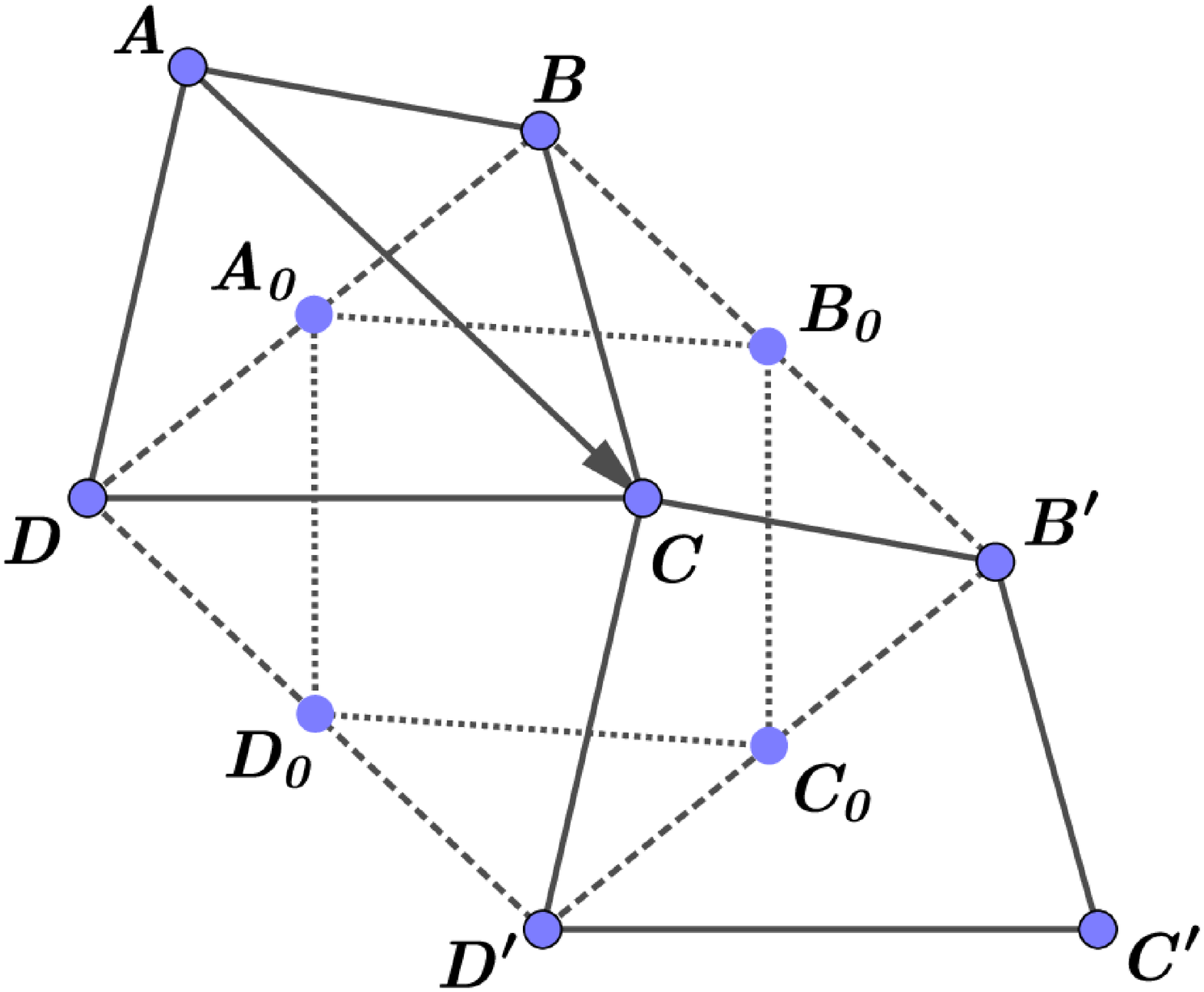}}    a) \\
\end{minipage}
\quad\quad
\begin{minipage}[h]{0.35\textwidth}
\center{\includegraphics[width=0.99\textwidth]{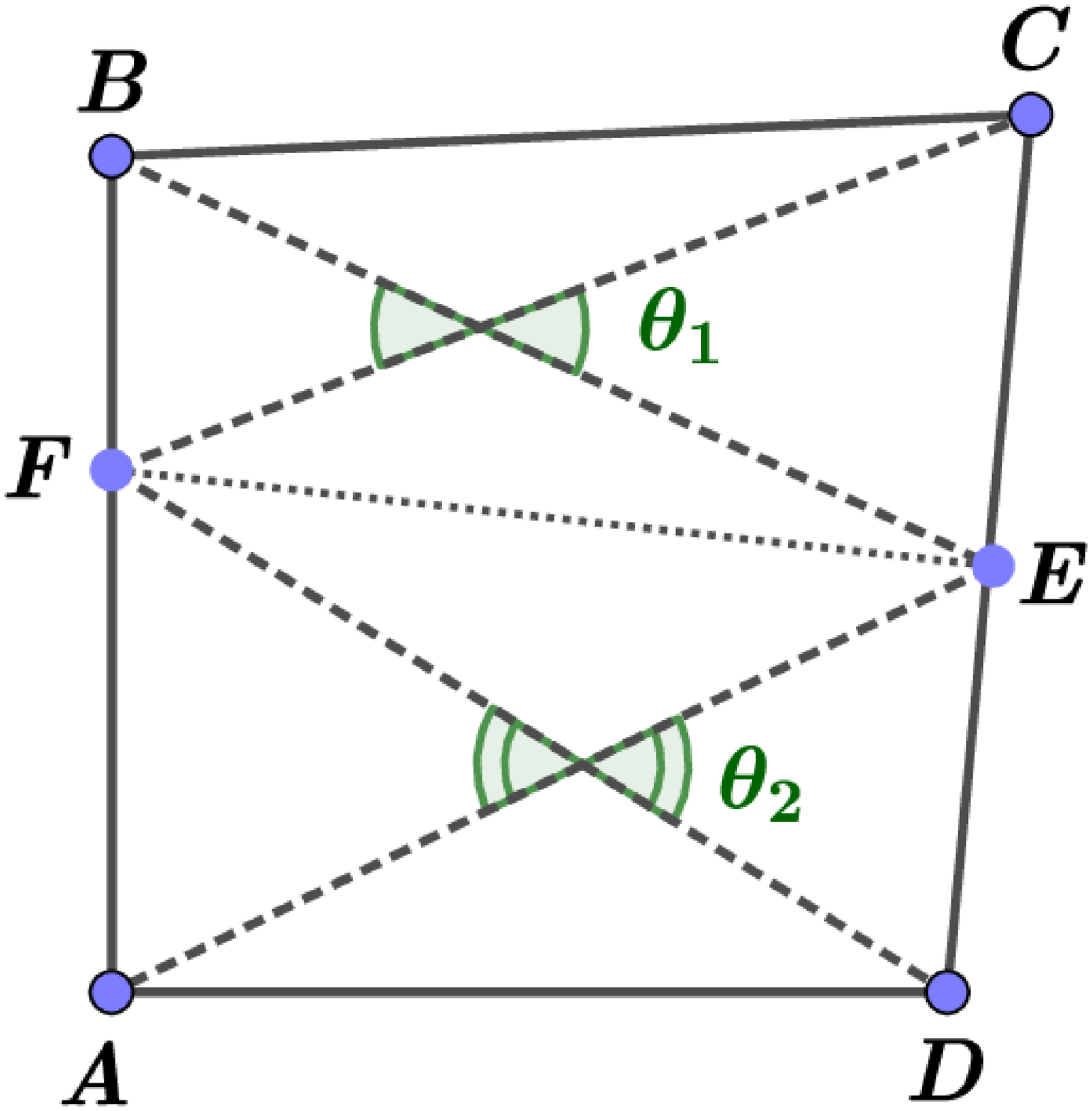}} b) \\
\end{minipage}
\caption{a) The proof of  Lemma \ref{le.le8};  b) The proof of Proposition \ref{pro.case4.2nsimple}.}
\label{olymp_f}
\end{figure}

%\begin{figure}
%\center{\includegraphics[width=0.45\textwidth]{olymp.eps}}\\
%\caption{Olympiad problem}
%\label{olymp_}
%\end{figure}

\begin{proof}
We present the proof for completeness. Let us translate the quadrilateral $ABCD$ by vector $AC$ (see the left panel Fig. \ref{olymp_f}). We get the quadrilateral $A'B'C'D'$,
where $A'=C$ and the quadrilateral $BB'D'D$ is a parallelogram.

Let the points $A_0, B_0, C_0$ and $D_0$ are midpoints of $[B,D], [B,B'], [B'D']$ and $[D',D]$, respectively.
It is easy to see that the quadrilateral $A_0,B_0,C_0,D_0$ is a parallelogram. Moreover,
the sum of the lengths of the diagonals and the angle between them of which are equal
to the sum of the lengths of the diagonals and the angle between them of the quadrilateral $ABCD$.

By the triangle inequality $d(B,C)+d(C,D')\geq d(B,D')$
and $d(B',C)+d(C,D)\geq d(B',D)$.
Adding up these inequalities we get $L(\bd(ABCD))\geq d(B,D')+d(B',D)=L(\bd(A_0B_0C_0D_0))$
what was required to be shown.
\end{proof}

Lemma \ref{le.le8} obviously implies the following result.

\begin{corollary}
We have $L \bigl(\bd(ABCD)\bigr)\geq 2\sqrt{2} \cdot \sin(\theta/2+\pi/4)$ for any quadrilateral $ABCD$ with given diagonal lengths $d(A,C)=d(B,D)=1$
and the angle $\theta$ between them.
\label{olymp}
\end{corollary}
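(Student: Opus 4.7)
The plan is to combine Lemma \ref{le.le8} with a direct computation of the perimeter of the extremal parallelogram.

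First I would apply Lemma \ref{le.le8}, which tells us that among all quadrilaterals with diagonals of lengths $d(A,C)=d(B,D)=1$ and angle $\theta$ between them, a parallelogram has the smallest perimeter. Hence it is enough to show that the perimeter of such a parallelogram equals exactly $2\sqrt{2}\sin(\theta/2+\pi/4)$.

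Next I would compute the sides of this parallelogram. Since the diagonals of a parallelogram bisect each other, the intersection point of the diagonals splits each of them into segments of length $1/2$. The four triangles formed at that intersection point have two sides of length $1/2$ and the included angle equal either to $\theta$ or to $\pi-\theta$. By the law of cosines, the two side lengths $a,b$ of the parallelogram satisfy
\begin{equation*}
a^2=\tfrac{1}{4}+\tfrac{1}{4}-\tfrac{1}{2}\cos(\theta)=\sin^2(\theta/2),\qquad
b^2=\tfrac{1}{4}+\tfrac{1}{4}+\tfrac{1}{2}\cos(\theta)=\cos^2(\theta/2),
\end{equation*}
so that $a=\sin(\theta/2)$ and $b=\cos(\theta/2)$ (both nonnegative for $\theta\in[0,\pi]$).

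Finally, the perimeter of the parallelogram is
\begin{equation*}
2a+2b=2\sin(\theta/2)+2\cos(\theta/2)=2\sqrt{2}\sin(\theta/2+\pi/4),
\end{equation*}
using the standard identity $\sin(x)+\cos(x)=\sqrt{2}\sin(x+\pi/4)$. Combining this with Lemma~\ref{le.le8} yields the desired inequality. There is no real obstacle here once Lemma~\ref{le.le8} is available; the only minor point is to identify the two half-angle expressions correctly, which is straightforward.
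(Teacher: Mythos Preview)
Your argument is correct and is exactly what the paper has in mind: the paper simply states that the corollary ``obviously'' follows from Lemma~\ref{le.le8}, and you have filled in the straightforward computation of the parallelogram's perimeter via the law of cosines and the identity $\sin x+\cos x=\sqrt{2}\sin(x+\pi/4)$.
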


%\begin{figure}
%\center{\includegraphics[width=0.35\textwidth]{olymp22.eps}}\\
%\caption{Case}
%\label{olymp22}
%\end{figure}

\begin{proof}[Proof of Proposition \ref{pro.case4.2nsimple}]
Let us suppose the contrary. Without loss of generality, we may suppose that there are at least two non-simple self Chebyshev centers $F$ and $E$, where
$F$ is an interior point of $[A,B]$,
$E$ is an interior point of $[C,D]$, while $\delta(\bd(ABCD))=d(F,D)=d(F,C)=d(E,A)=d(E,B)=1$
(see the right panel Fig.~\ref{olymp_f}).

By Corollary \ref{olymp} we have
$$
L(\bd(BCEF))\geq 2f(\theta_1)  \quad \mbox {and} \quad
L(\bd(ADEF))\geq 2f(\theta_2),
$$
where $f(\theta):=\cos\bigl(\theta/2\bigr)+\sin\bigl(\theta/2\bigr)$.
Therefore, we get
$$
L(\bd(ABCD))=L(\bd(BCEF))+L(\bd(ADEF))-d(E,F)\geq 2(f(\theta_1)+f(\theta_2))-1.
$$

Let us consider $\varphi_1:=\angle BEA$ and $\varphi_2:=\angle CFD$. It is easy to see that $\theta_1+\theta_2=\varphi_1+\varphi_2$.
Moreover, $\varphi_1/2=\arcsin(d(A,B)/2)$ and $\varphi_2/2=\arcsin(d(C,D)/2)$. Thus, we have
$$
\frac{d(A,B)}{2}+\frac{d(C,D)}{2}=\sin(\varphi_1/2)+\sin(\varphi_2/2)\leq \frac{\varphi_1+\varphi_2}{2}=\frac{\theta_1+\theta_2}{2}
$$
or $d(A,B)+d(C,D)\leq \theta_1+\theta_2$.

Suppose that $d(A,B)+d(C,D)\geq \frac{1}{\sqrt{2}}$. It implies $\theta_1+\theta_2\geq \frac{1}{\sqrt{2}}$.
Consider the function $F(\theta_1,\theta_2):=f(\theta_1)+f(\theta_2)$ on the set
\begin{equation}
S:=\left\{(\theta_1,\theta_2)\,\left|\,\,0\leq \theta_1,\theta_2 \leq \frac{\pi}{2},\,\theta_1+\theta_2\geq \frac{1}{\sqrt{2}} \right.\right\}.
\end{equation}
Since $f'(\theta)=\sqrt{2}\sin'(\theta/2+\pi/4)=\frac{1}{\sqrt{2}}\cos(\theta/2+\pi/4)>0$ for $\theta\in[0,\pi/2)$ then the function $f(\theta)$ is strictly increasing at a given interval.
On the other hand $f''(\theta)=\frac{1}{\sqrt{2}}\cos'(\theta/2+\pi/4)=-\frac{1}{2\sqrt{2}}\sin(\theta/2+\pi/4)<0$ for $\theta\in[0,\pi/2]$. Thus, the minimum of the function $F(\theta_1,\theta_2)$ is possible only at three points: $(0,1/\sqrt{2})$, $(1/\sqrt{2},0)$ and $(1/\sqrt{2},1/\sqrt{2})$. By direct computations we get
$$
F(0,1/\sqrt{2})=F(1/\sqrt{2},0)=2.2843..., \quad F(1/\sqrt{2},1/\sqrt{2})=2.5687....
$$
Hence, $L(\bd(ABCD))\geq 2F(\theta_1,\theta_2)-1=3.5686...$ and the quadrilateral $ABCD$ is not extremal.

Therefore, $d(A,B)+d(C,D)\leq \frac{1}{\sqrt{2}}$, that implies $d(A,B)\leq \frac{1}{\sqrt{2}}$ and $d(C,D)\leq \frac{1}{\sqrt{2}}$.
By Lemma \ref{le.10}, we get $L(\bd(ABCD))>2+\sqrt{2}$, hence, $ABCD$ is not extremal.
\end{proof}

\bigskip

\smallskip
According to Proposition \ref{pro.case4.2nsimple}, any extremal quadrilateral $ABCD$ with $\|G_{\min}\|=4$
has at least $2$ simple self Chebyshev centers on some adjacent sides
(otherwise there are a pair of opposite sides with non-simple centers, which is impossible due to the mentioned proposition).
Fix a pair of such simple self Chebyshev centers, $E$ and $F$.
\smallskip

There are three possible cases:
\smallskip
\begin{itemize}
\item They have one and the same farthest vertex;

\item They have different farthest vertices that are adjacent;

\item They have different farthest vertices that are not adjacent.
\end{itemize}
\smallskip

We shall call this three possibilities as Cases 4.1, 4.2, and 4.3 respectively.
\smallskip

\begin{figure}[t]
\begin{minipage}[h]{0.42\textwidth}
\center{\includegraphics[width=0.99\textwidth, trim=0mm 1mm 0mm 2mm, clip]{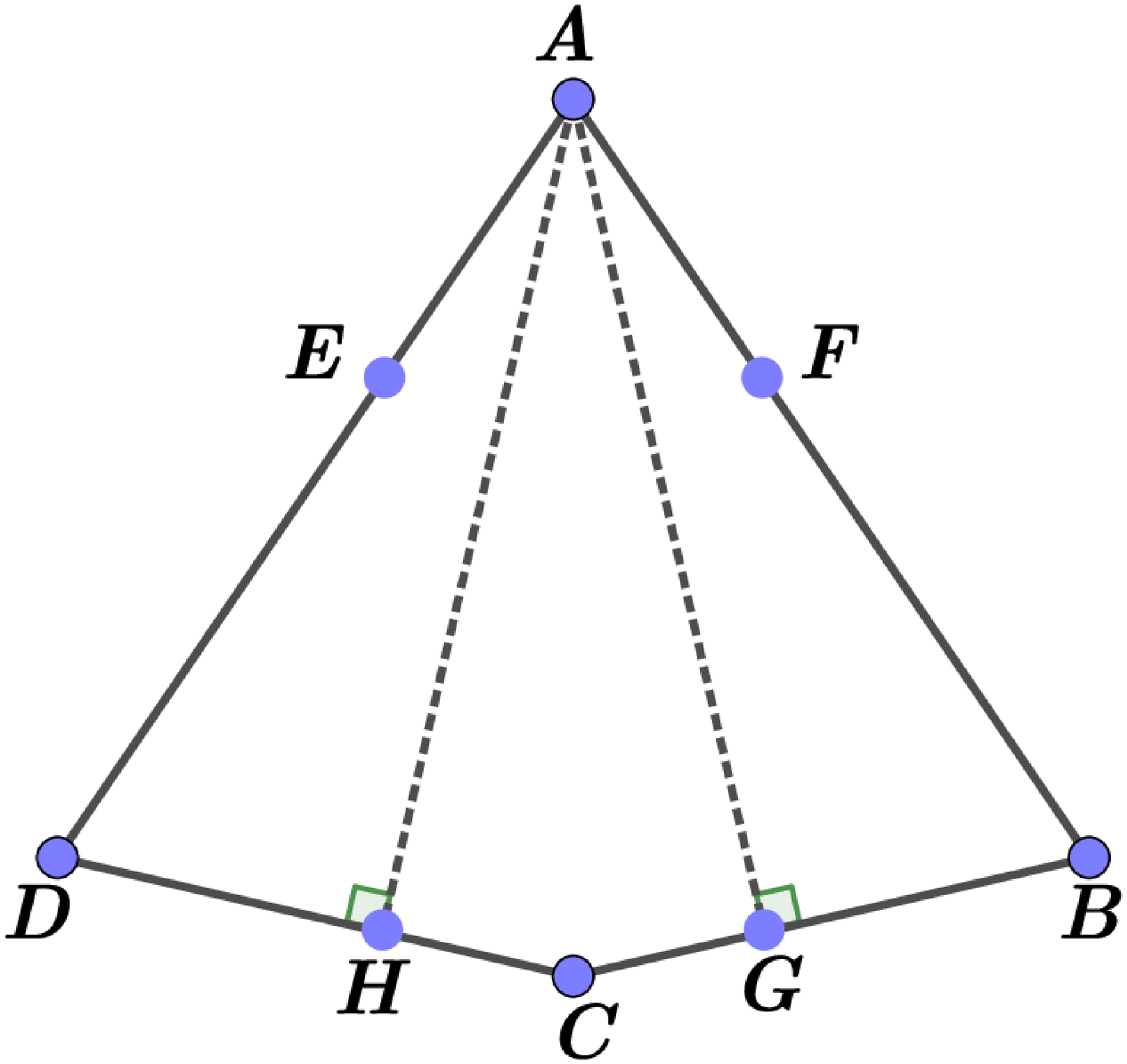}}   a) \\
\end{minipage}
\quad\quad
\begin{minipage}[h]{0.46\textwidth}
\center{\includegraphics[width=0.99\textwidth, trim=0mm 1mm 0mm 2mm, clip]{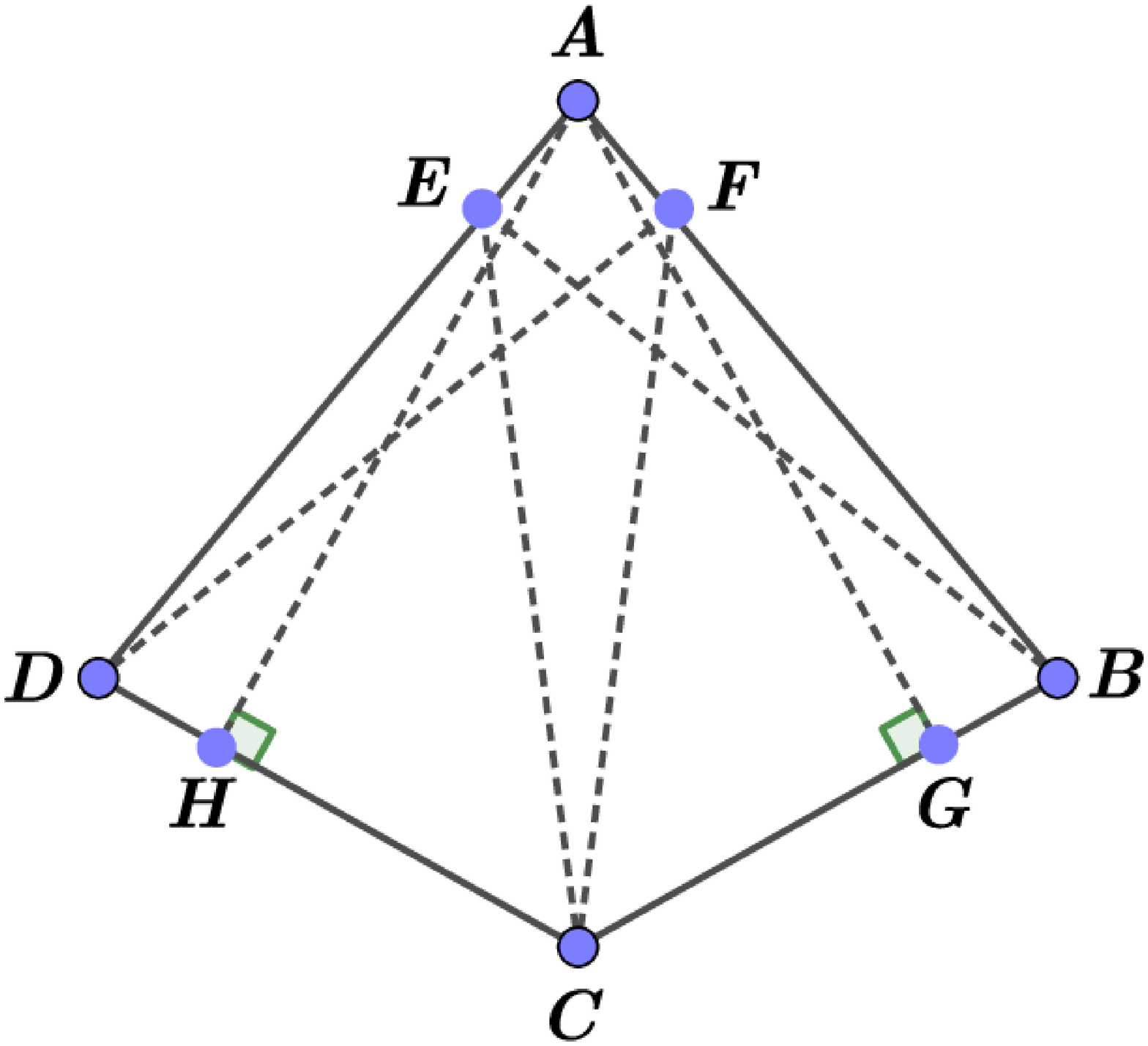}} b) \\
\end{minipage}
\caption{a) Case 4.1; b) Case 4.1a.}
\label{case4_1}
\end{figure}

\subsection{Case 4.1}
Let us consider a quadrilateral $ABCD$ with $\delta(\bd(ABCD))=d(A,H)=d(A,G)=1$
and $AH \perp [C,D]$, $AG \perp [B,C]$
(see the left panel Fig. \ref{case4_1}).

If $\angle BAD \geq \pi/2$, then $L(\bd(ABD))\geq 2+\sqrt{2}$. Hence, we have  $L(\bd(ABCD)) > 2+\sqrt{2}$, that is impossible for an extremal quadrilateral.
Thus, $\angle BAD< \pi/2$.

If $d(C,F)=1$ and $CF \perp [A,B]$ then $HAFC$ is rectangle and $d(H,F)>d(C,F)=1$.
Therefore, $H$ and $F$ are not self Chebyshev centers. Hence $CF$ is not orthogonal to $[A,B]$ as well as  $CE$ is not orthogonal to $[A,D]$.

If $E$ is a simple and $F$ is not simple (or vice versa,
$F$ is a simple and $E$ is not simple) self Chebyshev centers respectively, then the case is impossible (see Remark \ref{re.3.2} for Case~3.2).

Let us consider the case when $E$ and $F$ are not simple self Chebyshev centers,
i.~e. $\delta(\bd(ABCD))=d(D,F)=d(C,F)=d(C,E)=d(B,E)=d(H,A)=d(G,A)=1$
(see the right panel of Fig.~\ref{case4_1}).
Moreover, $\angle DFA \leq \pi/2$ and $\angle BEA \leq \pi/2$ by Proposition \ref{locmin_mu1}.
We show that this case is also impossible.

We put  $\alpha:=\angle DAH$, $\beta:=\angle BAG$, and
$\gamma:=\angle HAC=\angle GAC$.
It is clear that $\alpha<\gamma$ and $\beta<\gamma$.
From the triangle $HAB$ we have $\angle HAB=\beta+2\gamma<\pi/3$.
Similarly, $\angle GAD=\alpha+2\gamma<\pi/3$.
It implies $\alpha+\gamma<\pi/4$.

From $\tan(\alpha+\gamma)=\frac{\tan(\alpha)+\tan(\gamma)}{1-\tan(\alpha)\tan(\gamma)}<\tan(\pi/4)=1$,
we get $\tan(\alpha)+\tan(\gamma)<1$, i.e. $d(C,D)<1$.
Hence, $\angle FDC>\pi/3$.

Since $\angle DAF=\alpha+\angle HAB<\alpha+\pi/3$ and
$\angle ADF=\angle ADH-\angle FDC<\pi/2-\alpha-\pi/3$, then
$\angle DAF+\angle ADF<\pi/2$.
Consequently, $\angle DFA=\pi-\angle DAF-\angle ADF>\pi/2$ and we get a contradiction.

\begin{figure}
\center{\includegraphics[width=0.4\textwidth]{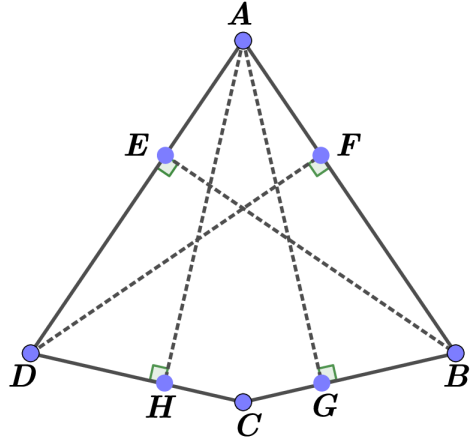}}\\
\caption{Case 4.1b.}
\label{case4_1b}
\end{figure}

We suppose now  that $E$ and $F$ are simple self Chebyshev centers. Hence $BE \perp [A,D]$ and $DF \perp [A,B]$ (see Fig. \ref{case4_1b}).

It is clear that the triangles $DFA$ and $AHD$, and respectively, $BEA$ and $AGB$ are congruent. It implies $\angle CDA= \angle DAB = \angle ABC:=\alpha$
and the triangles $DFA$ and $BEA$ are congruent. Hence $d(A,B)=d(A,D)$ and $d(B,C)=d(C,D)$.

It is clear that $\angle FDC=\angle ADC - \angle ADF=\alpha -(\pi/2-\alpha)=2\alpha-\pi/2$.
Hence $\angle BCD=2\pi-(\pi/2+\alpha+2\alpha-\pi/2)=2\pi-3\alpha$.
Since the triangles $AHC$ and $AGC$ are congruent then $\angle ACD=\angle ACB=\pi-\frac{3}{2}\alpha$.
It implies $d(C,H)=\cot(\pi-3\alpha/2)=-\cot(3\alpha/2)=d(C,G)$
and $d(D,H)=d(B,G)=\cot(\alpha)$.

Therefore, we get
$$
L\bigl(\bd(ABCD)\bigr)=\frac{2}{\sin(\alpha)}+2\cot(\alpha)-2\cot\left(\frac{3\alpha}{2}\right)=:f(\alpha).
$$
Since $\pi-3\alpha/2<\pi/2$ then $3\alpha>\pi$, i.e. $\alpha>\pi/3$.

By direct computations, we obtain
\begin{eqnarray*}
f'(\alpha)&\!\!\!\!=\!\!\!\!&
1-\frac{2\cos(\alpha)}{\sin^2(\alpha)}-\frac{2\cos^2(\alpha)}{\sin^2(\alpha)}+\frac{3\cos^2(\frac{3\alpha}{2})}{\sin^2(\frac{3\alpha}{2})}\\
&\!\!\!\!=\!\!\!\!&\frac{1}{\sin^2(\alpha)\sin^2(\frac{3\alpha}{2})}\left(\sin^2\left(\frac{3\alpha}{2}\right)\left(1-3\cos^2(\alpha)
-2\cos(\alpha)\right)+3\cos^2\left(\frac{3\alpha}{2}\right)\sin^2(\alpha)\right)\\
&\!\!\!\!=\!\!\!\!&-\frac{2}{\sin^2(\frac{3\alpha}{2})}\left(8\cos^4\left(\frac{\alpha}{2}\right)-4\cos^2\left(\frac{\alpha}{2}\right)-1\right).
\end{eqnarray*}

Solving the equation $8\cos^4\left(\frac{\alpha}{2}\right)-4\cos^2\left(\frac{\alpha}{2}\right)-1=0$ with respect to $\alpha$, we obtain
$$
\alpha_0=2\arccos\left(\frac{1}{2}\sqrt{1+\sqrt{3}}\right).
$$

It is easy to check that $f(\alpha)$ achieves its minimal value on $(\pi/3,\pi/2)$ exactly at the point $\alpha_0$.
This minimal value is
$$
f\left(2\arccos\left(\frac{1}{2}\sqrt{1+\sqrt{3}}\right)\right)=\frac{4}{3}\sqrt{3+2\sqrt{3}}=3.389946...
$$
Therefore, if a quadrilateral $ABCD$ is extremal, then it is a magic kite.
\medskip

\subsection{Case 4.2}
Let us consider a quadrilateral $ABCD$ with $\delta(\bd(ABCD))=d(A,H)=d(B,E)=1$
and $AH \perp [D,C]$, $BE \perp [A,D]$
(see the left panel Fig. \ref{case4_2}).

If $d(D,G)=1$ and $DG \perp [B,C]$ then $EBGD$ is rectangle and $d(E,G)>d(D,G)=1$.
Therefore, $E$ and $G$ are not self Chebyshev centers. Hence $DG\not\perp [B,C]$ and $CF\not\perp [A,B]$.

If $d(D,F)=d(C,F)=1$ then this case is impossible (see Remark \ref{re.3.2} for Case 3.2).

Now, we consider the case when $DF \perp [A,B]$ and $d(A,G)=d(D,G)=1$ (see the right panel Fig.~\ref{case4_2}).
We put $\alpha:=\angle BAD=\angle ADC<\pi/2$, $\gamma:=\angle EBC<\pi/2$ and denote
by $G'$ the midpoint of $[A,D]$.

If $\angle AGD \geq \frac{\pi}{2}$, then $L(\bd(AGD))\geq 2+\sqrt{2}$. Hence, we have  $L(\bd(ABCD)) > 2+\sqrt{2}$, that is impossible for an extremal quadrilateral.
Thus, $\angle AGD< \frac{\pi}{2}$. It implies that $\alpha>\angle GAD >\frac{\pi}{4}$.

\begin{figure}[t]
\begin{minipage}[h]{0.46\textwidth}
\center{\includegraphics[width=0.99\textwidth, trim=0mm 1mm 0mm 2mm, clip]{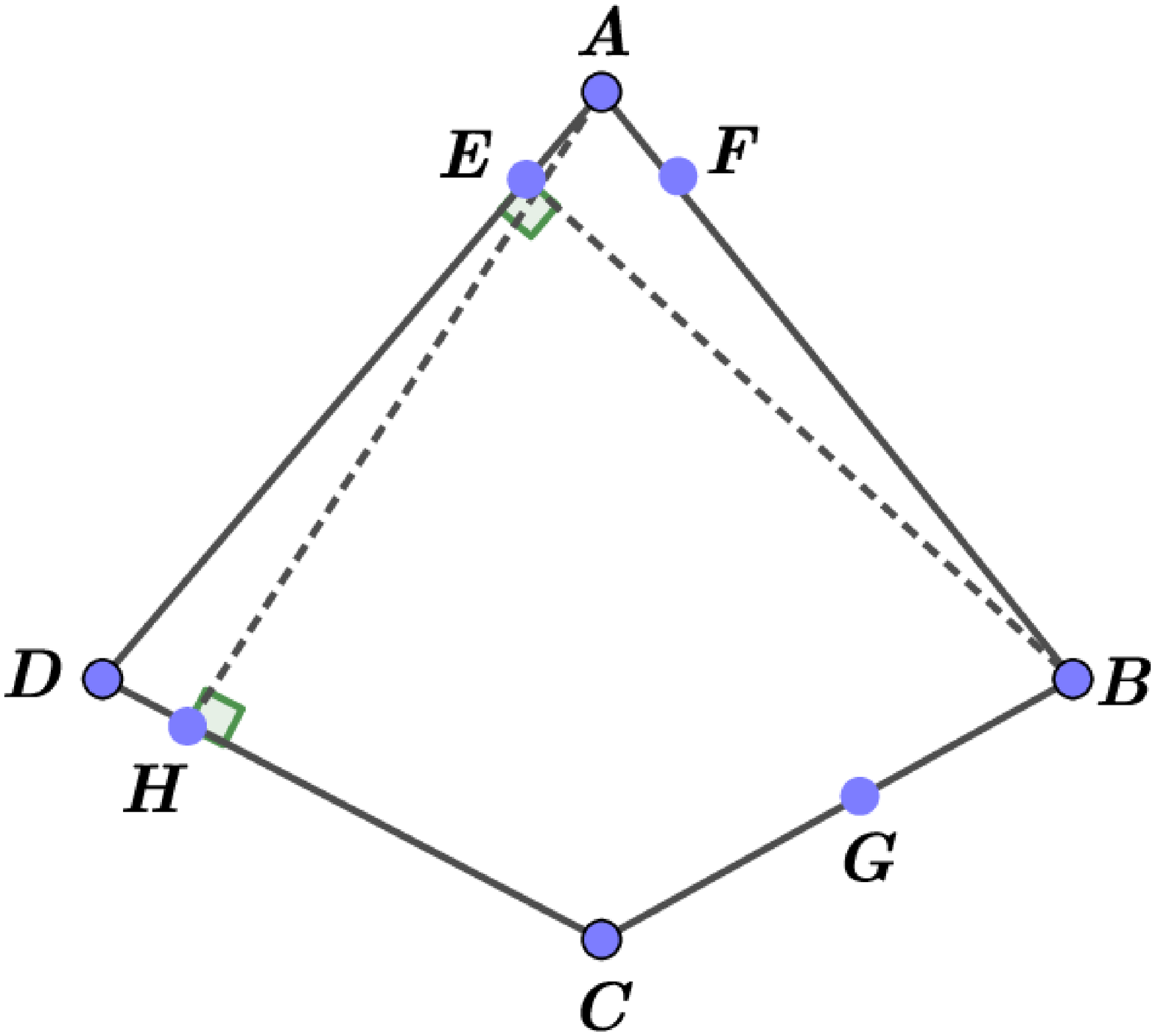}}   a) \\
\end{minipage}
\quad\quad
\begin{minipage}[h]{0.45\textwidth}
\center{\includegraphics[width=0.99\textwidth, trim=0mm 1mm 0mm 2mm, clip]{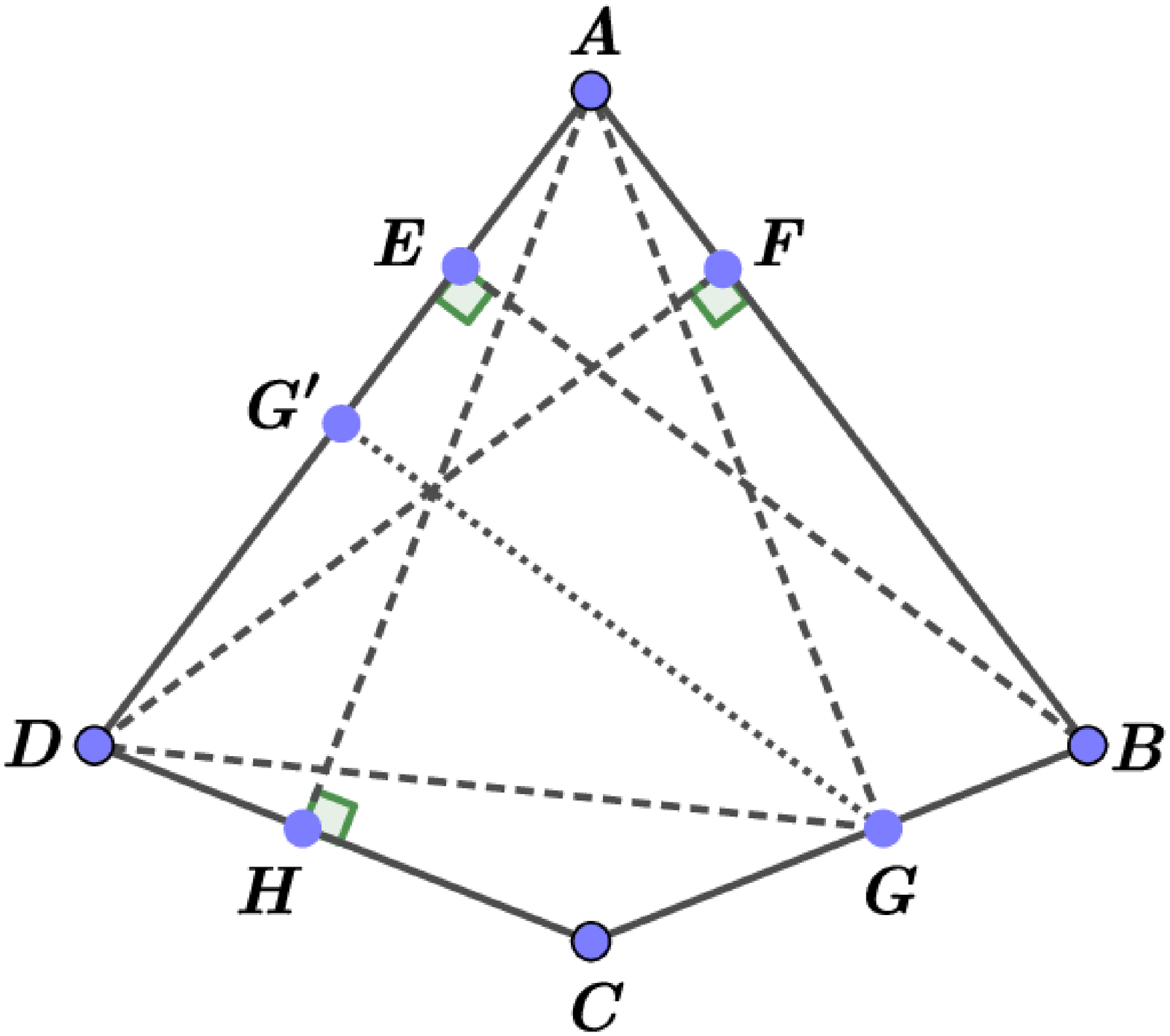}} b) \\
\end{minipage}
\caption{a) Case 4.2; b) Case 4.2a.}
\label{case4_2}
\end{figure}

It is easy to see that $d(A,E)=\cot(\alpha)$ and $d(A,G')=\frac{1}{2}d(A,D)=\frac{1}{2\sin(\alpha)}$. Hence $d(E,G')=\frac{1}{2\sin(\alpha)}-\cot(\alpha)=\frac{1-2\cos(\alpha)}{2\sin(\alpha)}$ and
$d(G,G')=\sqrt{1-d^2(A,G')}=\sqrt{1-\frac{1}{4\sin^2{\alpha}}}$.

From $\cot(\gamma)d(E,G')+d(G,G')=d(E,B)=1$ we get
$$
\cot(\gamma)(1-2\cos(\alpha))+\sqrt{4\sin^2(\alpha)-1}=2\sin(\alpha),
$$
or, equivalently,
$$
\gamma=\arctan\left(\frac{1-2\cos(\alpha)}{2\sin(\alpha)-\sqrt{4\sin^2(\alpha)-1}}\right).
$$

Since $\angle BAH=\angle BAD-\angle DAH=\alpha-(\pi/2-\alpha)=2\alpha-\pi/2$ then from
the triangle $BAH$ we have
$$
1+\frac{1}{\sin^2(\alpha)}-\frac{2}{\sin(\alpha)}\cos(2\alpha-\pi/2)=1+\frac{1}{\sin^2(\alpha)}-4\cos(\alpha)=d^2(B,H)\leq 1.
$$
It implies $1\leq 4\sin^2(\alpha)\cos(\alpha)$. On the other hand
since $\angle GAB= \alpha-\angle G'AG$ we get
$$
\angle AGB=\pi-\gamma-(\pi/2-\alpha)-\alpha+\angle G'AG=\pi/2-\gamma+\angle G'AG\leq \pi/2.
$$
Thus, $\gamma\geq \angle G'AG=\arctan\left(\frac{d(G,G')}{d(A,G)}\right)=\arctan(4\sin^2(\alpha)-1)$
or, substituting the found expression for $\gamma$, we get
$$
\frac{1-2\cos(\alpha)}{2\sin(\alpha)-\sqrt{4\sin^2(\alpha)-1}}\geq 4\sin^2(\alpha)-1,
$$
which can be rewritten as  $1\geq 4\sin^2(\alpha)\cos(\alpha)$.

Solving equation $1= 4\sin^2(\alpha)\cos(\alpha)$ with respect to $\alpha\in(\pi/4,\pi/2)$, we obtain
$\alpha_0=\arccos(t_0)=1.297824478...$, where $t_0=0.2695944364...$ is the root of the polynomial $f(t)=4t^3-4t+1$ on the interval $[0.2,\,0.3]$.
By direct computations, we get $\gamma_0=1.024852628...$ and $\angle AGB= \pi/2$.
Hence $d(C,D)=d(B,C)$.

It is easy to see that $\angle ACB=\pi-(\pi/2-\alpha)-\gamma-\alpha/2=\pi/2+\alpha/2-\gamma$. The equalities
$$
\frac{d(B,C)}{\sin(\alpha/2)}=\frac{d(A,B)}{\sin(\angle ACB)}=\frac{1}{\sin(\alpha)\sin(\pi/2-\gamma+\alpha/2)}=\frac{1}{\sin(\alpha)\cos(\gamma-\alpha/2)},
$$
imply
$$
d(B,C)=\frac{1}{2\cos(\alpha/2)\cos(\gamma-\alpha/2)}=\frac{1}{\cos(\gamma)+\cos(\alpha-\gamma)}.
$$

Therefore, by direct computations, we get
$$
L(\bd(ABCD))=2d(A,B)+2d(B,C))=\frac{2}{\sin(\alpha_0)}+\frac{2}{\cos(\gamma_0)+\cos(\alpha_0-\gamma_0)}=3.426...
$$
Hence, the quadrilateral $ABCD$ is not extremal.
\medskip

\begin{figure}[t]
\begin{minipage}[h]{0.46\textwidth}
\center{\includegraphics[width=0.99\textwidth, trim=0mm 1mm 0mm 2mm, clip]{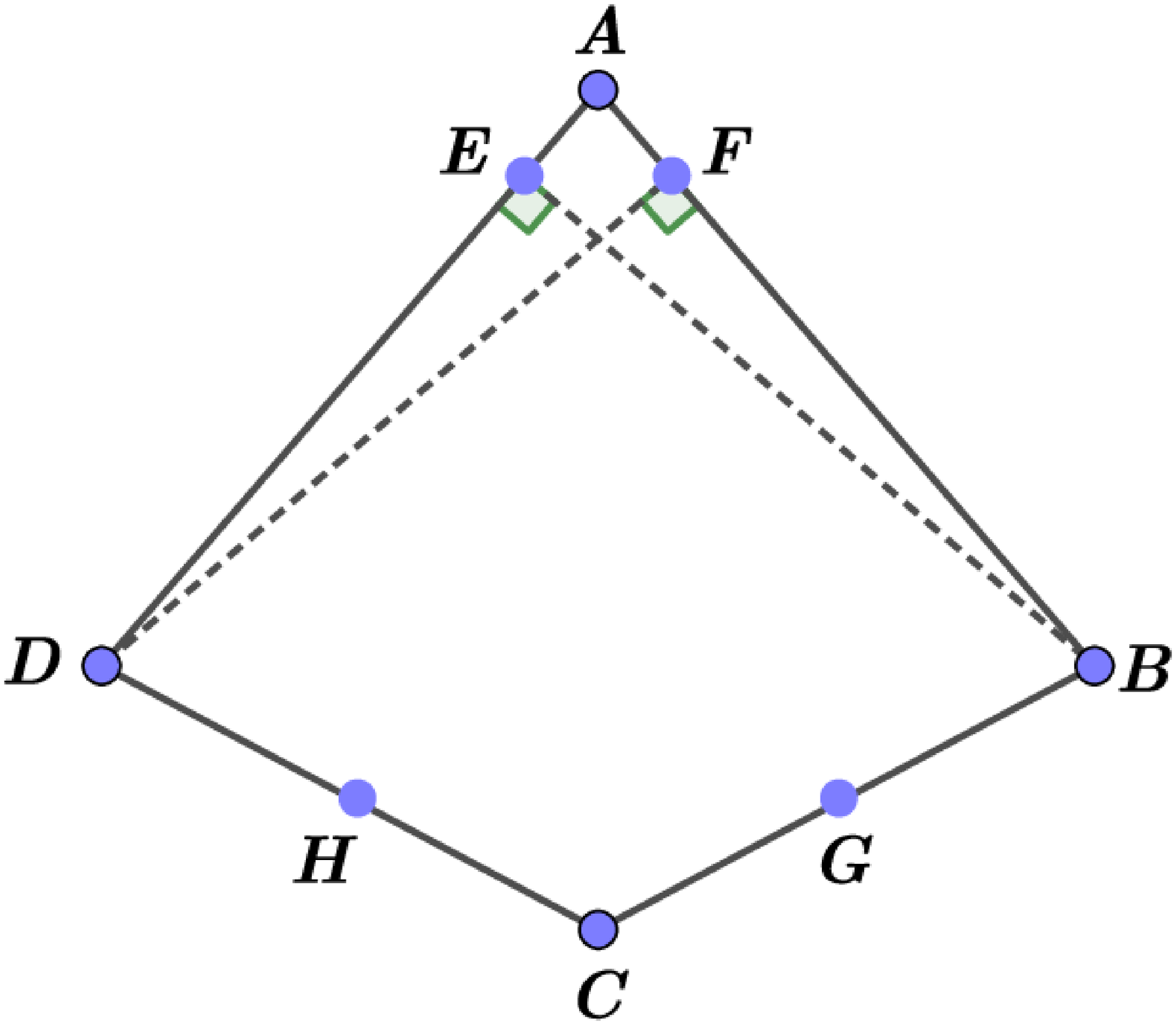}}   a) \\
\end{minipage}
\quad\quad
\begin{minipage}[h]{0.46\textwidth}
\center{\includegraphics[width=0.99\textwidth, trim=0mm 1mm 0mm 2mm, clip]{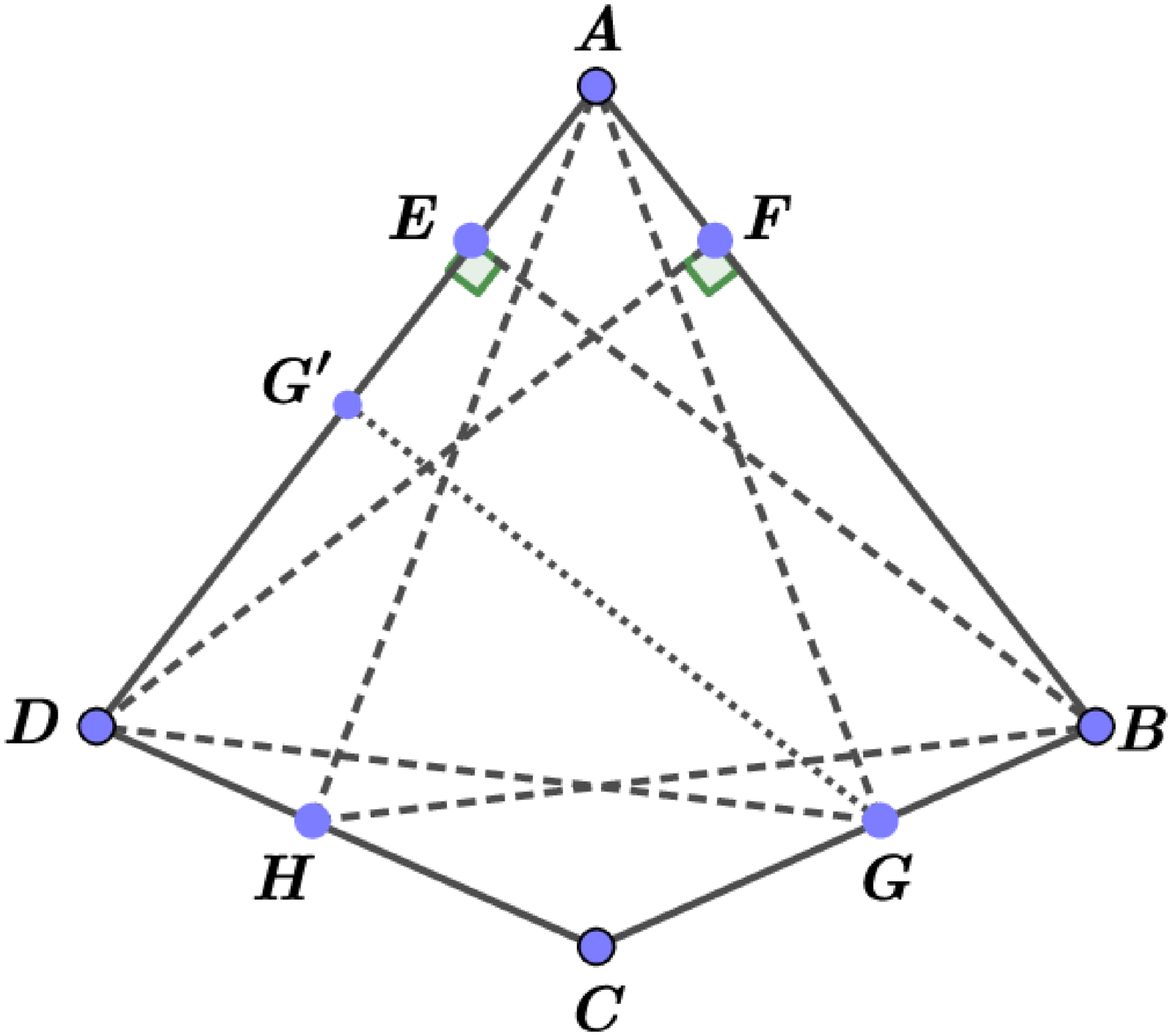}} b) \\
\end{minipage}
\caption{a) Case 4.3; b) Case 4.3a.}
\label{case4_3}
\end{figure}

\subsection{Case 4.3}
Let us consider a quadrilateral $ABCD$ with $\delta(\bd(ABCD))=d(D,F)=d(B,E)=1$
and $DF \perp [A,B]$, $BE \perp [A,D]$ (see the left panel Fig. \ref{case4_3}).

If $d(D,G)=1$ and $DG \perp [B,C]$ then $EBGD$ is rectangle and $d(E,G)>d(D,G)=1$.
Consequently, $E$ and $G$ is not self Chebyshev centers. Therefore, $DG$ not $\perp [B,C]$ and $CF$ not $\perp [A,B]$.

Now, we suppose that $d(A,H)=d(B,H)=1$ and $d(A,G)=d(D,G)=1$ (see the right panel Fig. \ref{case4_3}).
It is easy to see that $d(A,B)=d(A,D)$ and $d(B,C)=d(C,D)$.
We put $\alpha:=\angle BAD<\pi/2$, $\gamma:=\angle EBC<\pi/2$ and
denote by $G'$ be the midpoint of $[A,D]$.

If $\angle AGD \geq \frac{\pi}{2}$, then $L(\bd(AGD))\geq 2+\sqrt{2}$. Hence, we have  $L(\bd(ABCD)) > 2+\sqrt{2}$, that is impossible for an extremal quadrilateral.
Thus, $\angle AGD< \frac{\pi}{2}$. It implies that $\alpha>\angle GAD >\frac{\pi}{4}$.

Repeating the reasoning similar to the previous case we get $1\geq 4\sin^2(\alpha)\cos(\alpha)$.
Solving this inequality for $\alpha\in(\pi/4,\pi/2)$, we obtain $\alpha\in(\alpha_0,\pi/2)$ and
$\alpha_0=\arccos(t_0)=1.297824478...$, where $t_0=0.2695944364...$ is a unique root of the polynomial $4t^3-4t+1$ on the interval $[0.2\,,0.3]$.

Also similar to the previous case, we get
$$
L(\bd(ABCD))=2d(A,B)+2d(B,C)=\frac{2}{\sin(\alpha)}+\frac{2}{\cos(\gamma)+\cos(\alpha-\gamma)}.
$$

The results of numerical calculations show
that the perimeter $L\bigl(\bd(ABCD)\bigr)$ for the quadrilaterals under consideration reaches its minimal value
exactly for $\angle AGB =\angle AHD=\pi/2$ or $\alpha=\alpha_0=1.297824478...$ and $\gamma_0=1.024852628...$.
This minimal value is $L(\bd(ABCD))=3.426...$.
Hence, the quadrilateral $ABCD$ is not extremal.
\bigskip

It should be noted, that we found a quadralateral that can be extremal only in Case~4.1. Since at least one extremal quadrilateral does exist, we conclude
that it is a magic kite from Case 4.1 and from the conjecture by Rolf Walter.
This argument finished the proof of Theorem \ref{t.main}.

\section{Computations and further conjectures}\label{sect.6}

The search for possible extremal polygons for small values of $n$ was undertaken by E.V.~Nikitenko.
His experiments led to the conjecture that {\it for odd $n$, any extremal polygon is a regular $n$-gon} (this is the case for $n=3$).
On the other hand, {\it for even values of $n$, regular $n$-gons are not extremal} in the above sense. The type of an extremal polygon
(quite possibly) depends on the power of the number $2$ with which it enters to $n$ as a multiplier.
See Fig. \ref{Fig3} for hypothetical extreme polygons for $n=6$ and $n=10$.
The calculation of the characteristics of these polygons (in particular, these polygons are equilateral),
as well as numerous computer experiments, lead to the following conjectures:

\begin{figure}[t]
\begin{minipage}[h]{0.45\textwidth}
\center{\includegraphics[width=0.99\textwidth]{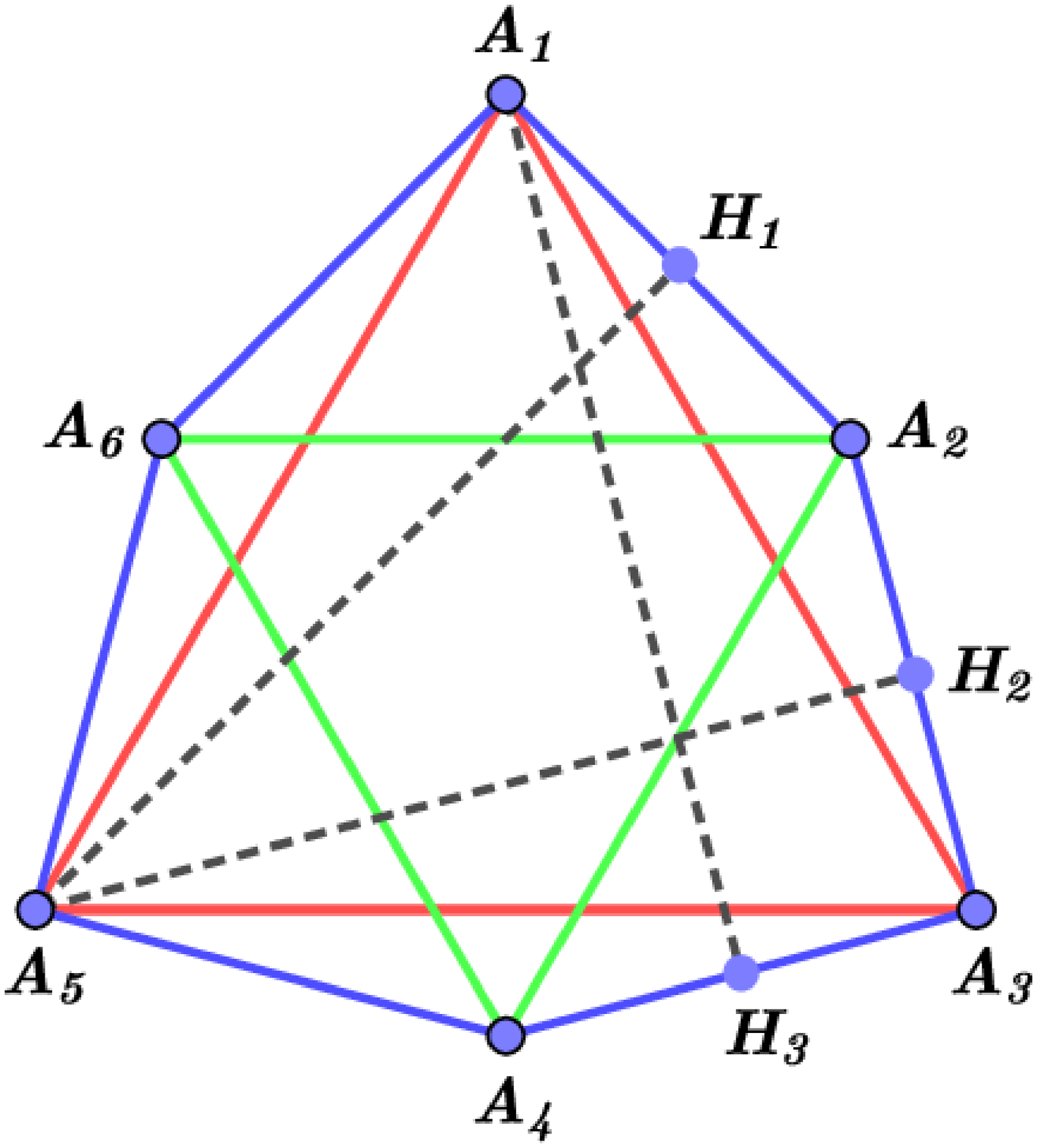}}    a) \\
\end{minipage}
\quad\quad
\begin{minipage}[h]{0.48\textwidth}
\center{\includegraphics[width=0.99\textwidth]{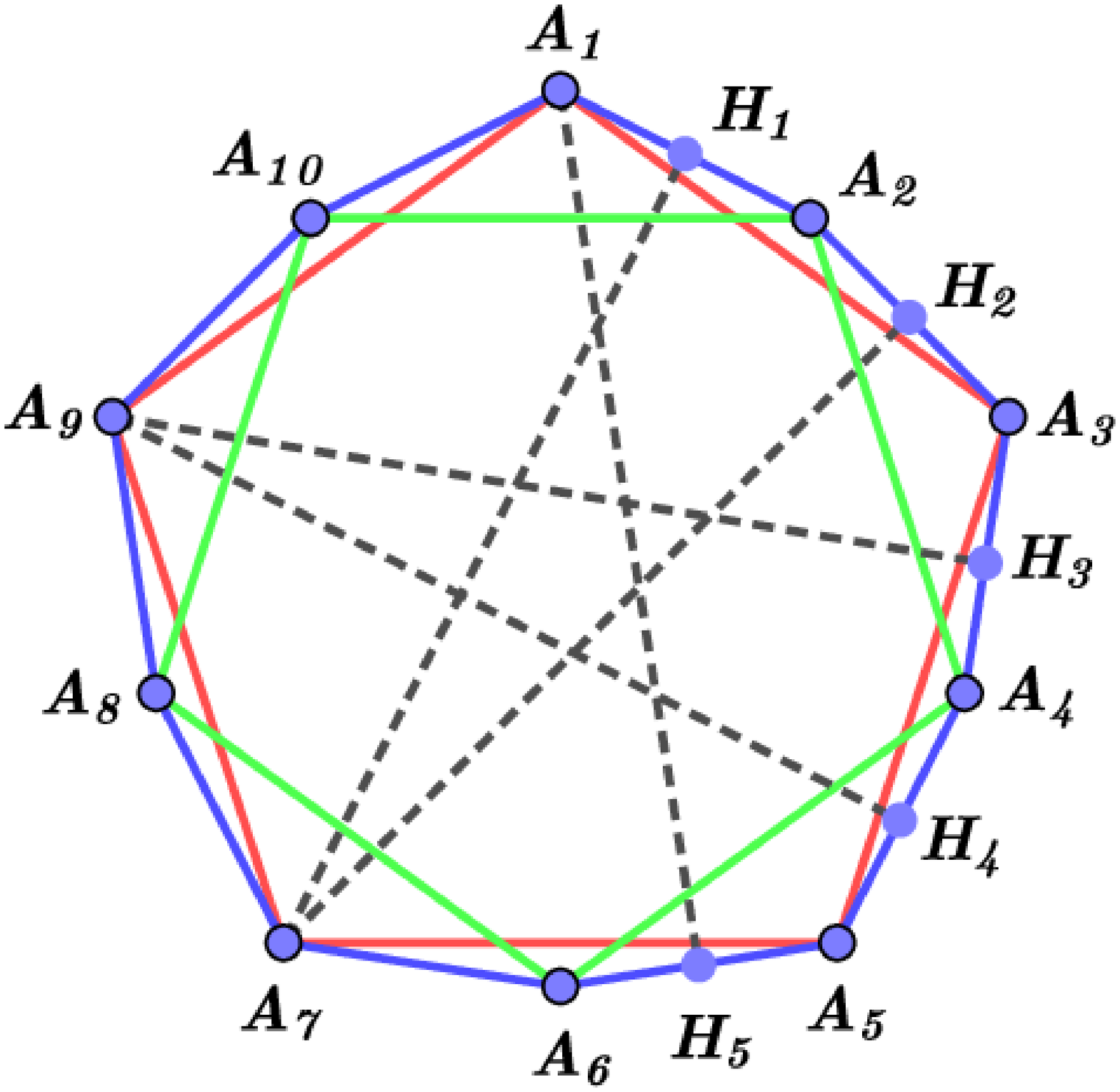}} b) \\
\end{minipage}
\caption{Extreme $n$-gon candidates for: a) $n=6$; b) $n=10$. In both cases, the green and red
polygons are regular, and are negative homothets of each other.}
\label{Fig3}
\end{figure}

\begin{conjecture}
For any convex $6$-gon $P$ in the Euclidean plane, one has
$$
L(\Gamma)\geq 12\left(2- \sqrt{3}~\right)\cdot \delta(\Gamma),
$$
where $\Gamma$ is the boundary of $P$.
\end{conjecture}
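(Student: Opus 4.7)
The plan is to follow the methodology developed in Sections~\ref{sect.2}--\ref{sect.5} for the quadrilateral case. The reduction to convex 6-gons goes through essentially verbatim by a reflection argument generalizing Fig.~\ref{Fig1.5}, and existence of an extremal 6-gon follows from a standard compactness/limiting argument as in Section~\ref{sect.1}. Since $2+\sqrt{2}=3.414...>12(2-\sqrt{3})=3.215...$, the inequality $L(\Gamma)\geq(2+\sqrt{2})\delta(\Gamma)$ from Proposition~\ref{pr.simple1} is again strong enough to exclude extremal 6-gons having any vertex in $G_{\min}$. The hexagonal analogue of Proposition~\ref{pr_extr2}, proved by the same right-triangle argument, shows that for any $A\in G_{\min}\cap[A_i,A_{i+1}]$ the adjacent vertices $A_i,A_{i+1}$ do not belong to ${\mathcal F}(A)$, so ${\mathcal F}(A)\subset\{A_{i+2},A_{i+3},A_{i+4},A_{i+5}\}$.

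Next, one stratifies by $k:=\|G_{\min}\|\in\{1,\dots,6\}$, each stratum refined by how many of its points are simple versus non-simple self Chebyshev centers (see p.~\pageref{simple_nonsimple}) and by the combinatorial pattern of the sets ${\mathcal F}(\cdot)$ around $\Gamma$. Small-$k$ strata should yield to the methods of Cases~1--3 of the quadrilateral argument: Proposition~\ref{pr_extr1} forces long chains of vertices at distance exactly $\delta(\Gamma)$ from a single center, whence the triangle-inequality bounds of the type $L\geq n\delta$ quickly exceed $12(2-\sqrt{3})\delta$. Intermediate strata are pruned by the ellipse-tangency variation (Lemma~\ref{ellipse} and Corollary~\ref{cor.angle2}), which enforces local angle-bisector conditions at vertices adjacent to simple centers. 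Genuinely new configurations absent from the quadrilateral case, notably those in which three mutually non-adjacent sides carry non-simple centers, will require an analogue of Proposition~\ref{pro.case4.2nsimple}, proved by applying Corollary~\ref{olymp} to each of the three ``parallelogram pieces'' cut off by those centers.

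In the remaining richest stratum, expected to be $k=6$ with a simple center on every side, the accumulated incidence and bisector conditions should force the configuration of Fig.~\ref{Fig3}a: a $\mathbb{Z}/3$-symmetric equilateral hexagon whose alternating vertices form two equilateral triangles that are negative homothets of one another, with each ``outer'' vertex playing the role of the vertex $A$ in Case~4.1 of the quadrilateral argument (it sees two perpendicular feet of length $\delta$ to adjacent opposite sides). This reduces the problem to a one-parameter family, parametrized for instance by the half-angle $\alpha$ at an outer vertex, and in direct analogy with Case~4.1 one writes $L(\Gamma)$ as an explicit trigonometric function $f(\alpha)$, differentiates it, and checks that the unique interior critical point $\alpha^{\ast}$ yields $L(\Gamma)/\delta(\Gamma)=12(2-\sqrt{3})$.

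The principal obstacle is the case analysis itself. The combinatorial explosion in passing from $n=4$ to $n=6$ is substantial: three pairs of opposite sides appear instead of two, the sets ${\mathcal F}(A)$ may have cardinality up to three, and mixed chains of simple and non-simple centers sharing a common farthest vertex become possible. Proving that the symmetric ansatz of Fig.~\ref{Fig3}a is actually \emph{forced} at the optimum, rather than merely optimal among symmetric candidates, is the crucial geometric point. As in Sections~\ref{sect.4}--\ref{sect.5}, one should expect that a few subcases will ultimately have to be closed by CAS-assisted symbolic or numerical computation, with the safety margin provided by the fact that the conjectured value $12(2-\sqrt{3})$ is well separated from the bounds produced by the crude arguments in the non-extremal strata.
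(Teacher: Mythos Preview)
The statement you are attempting to prove is labelled \emph{Conjecture} in the paper, and the paper offers no proof of it whatsoever. It appears in the final Section~\ref{sect.6} as an open problem suggested by numerical experiments; the authors explicitly write that ``numerous computer experiments lead to the following conjectures'' and then state this inequality for $6$-gons together with a companion conjecture for $10$-gons. There is therefore nothing to compare your argument against.

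What you have written is not a proof but a programme. You correctly identify which ingredients from the quadrilateral case would survive (the reduction to convex polygons, Proposition~\ref{pr.simple1}, Proposition~\ref{pr_extr1}, the ellipse variation of Lemma~\ref{ellipse}), and your outline of the stratification by $\|G_{\min}\|$ is the natural thing to try. But you yourself name the genuine obstruction: the combinatorial explosion in the case analysis, and in particular the step of showing that the $\mathbb{Z}/3$-symmetric configuration of Fig.~\ref{Fig3}a is \emph{forced} rather than merely admissible. You do not indicate how to carry out that step, nor do you provide the claimed one-parameter reduction or the verification that its minimum equals $12(2-\sqrt{3})$. Several of your intermediate assertions are also unsupported: for instance, it is not clear that ``small-$k$ strata'' are disposed of by bounds of the form $L\geq n\delta$, since already for $n=4$ Case~2 and Case~3 required substantial work beyond such crude estimates, and for $n=6$ the sets ${\mathcal F}(A)$ can have up to four elements rather than two, so the analogue of Proposition~\ref{pr_extr2} is weaker, not stronger. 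As it stands, your text is a plausible research plan for attacking an open problem, not a proof.
\bigskip
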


\begin{conjecture}
For any convex $10$-gon $P$ in the Euclidean plane, one has
$$
L(\Gamma)\geq 20\left( 1+\sqrt{5}-\sqrt{5+2\sqrt{5}}~\right)\cdot \delta(\Gamma),
$$
where $\Gamma$ is the boundary of $P$.
\end{conjecture}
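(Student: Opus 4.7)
The plan is to adapt the case-analysis framework developed in Sections \ref{sect.2}--\ref{sect.5} for the quadrilateral case to $n=10$, with substantial additional structural reductions. First, the existence of an extremal $10$-gon follows from the compactness argument mentioned after Proposition \ref{pr.simple1}. Since the conjectured constant $c_{10}:=20(1+\sqrt{5}-\sqrt{5+2\sqrt{5}})\approx 3.168$ satisfies $c_{10}<2+\sqrt{2}$, Proposition \ref{pr.simple1} and Corollary \ref{cor.angle1} together imply that for any extremal $10$-gon $P$ with $\Gamma=\bd(P)$, no vertex of $P$ lies in the set $G_{\min}$ of self Chebyshev centers of $\Gamma$, every self Chebyshev center lies in the relative interior of some side, and every interior angle of $P$ is at least $\pi/2$.

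The core of the proof would be an analysis by $k:=\|G_{\min}\|\in\{1,2,\ldots,10\}$. Small values of $k$ should be dispatched by the triangle-inequality technique of Proposition~\ref{pr.case1} combined with Proposition~\ref{pr_extr1}: when $k$ is small, many consecutive sides contain no self Chebyshev center, which forces many vertex pairs to lie at distance $\delta(\Gamma)$ from a common center and produces a long polygonal chain inside $\Gamma$ whose length alone already exceeds $c_{10}\,\delta(\Gamma)$. For larger $k$, the main tools are Lemma~\ref{ellipse} and Corollary~\ref{cor.angle2}: at any vertex $V$ adjacent to a side carrying a simple self Chebyshev center, the bisection identity equating the two angles subtended at $V$ by the segments from $V$ to its farthest boundary point is forced, unless a small perturbation of $V$ would reduce the perimeter without lowering $\delta(\Gamma)$. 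These bisection conditions convert the problem into a system of polynomial-trigonometric equations in finitely many angular unknowns, tractable by resultant and elimination techniques of the kind invoked implicitly through Maple in Section \ref{sect.4}.

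The conjectured extremal configuration depicted in Fig.~\ref{Fig3}b is a convex equilateral $10$-gon with dihedral symmetry $D_5$: its $10$ vertices lie alternately on two concentric regular pentagons that are negative homothets of one another, and it has exactly $10$ self Chebyshev centers, each of which is non-simple with two farthest vertices placed symmetrically about the radial direction. Under the $D_5$ ansatz the bisection identities reduce to a single trigonometric equation in one parameter (the ratio of the two pentagon radii), and direct computation using the identities $\cos(\pi/5)=(1+\sqrt{5})/4$ and $\sin(\pi/5)=\sqrt{10-2\sqrt{5}}/4$ yields the ratio $L(\Gamma)/\delta(\Gamma)=20(1+\sqrt{5}-\sqrt{5+2\sqrt{5}})$ for this configuration.

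The main obstacle is twofold. First, the naive case split on $\|G_{\min}\|$ together with the choices of positions of simple versus non-simple centers on the sides explodes combinatorially for $n=10$, and a conceptual reduction, perhaps a symmetrization or averaging argument exploiting convexity of the target functional on a suitable slice of configuration space, appears indispensable to keep the enumeration manageable. Second, and more seriously, the problem may admit genuinely new local minima absent from the quadrilateral case: one expects several distinct critical configurations with lower-than-$D_5$ symmetry (analogues of the one-parameter families that appeared in Cases 3.3 and 4.3 before being ruled out), and excluding them seems to require either a refined edge-length inequality in the spirit of Lemma~\ref{stor} or a direct quantitative stability estimate comparing the perimeter of every non-symmetric critical configuration to that of the $D_5$-symmetric one.
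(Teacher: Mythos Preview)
The paper does \emph{not} prove this statement: it is listed explicitly as Conjecture~2 in Section~\ref{sect.6}, alongside the analogous Conjecture~1 for hexagons, with only a description of the conjectured extremal configuration (Fig.~\ref{Fig3}b) and no proof or proof sketch. So there is no ``paper's own proof'' to compare your proposal against.

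Your proposal is not a proof either, and you say so yourself in the final paragraph. What you have written is a reasonable research programme: the reductions via Propositions~\ref{pr.simple1} and~\ref{pr_extr1} and Corollary~\ref{cor.angle1} do carry over verbatim to $n=10$, and the identification of the $D_5$-symmetric configuration with the stated ratio is correct. But the two obstacles you name are the entire difficulty. The case analysis for $n=4$ already required seven subcases with ad hoc arguments and computer-assisted bounds; for $n=10$ the number of combinatorial types of $(G_{\min},\mathcal{F})$ configurations grows enormously, and nothing in the paper's toolkit (Lemma~\ref{ellipse}, Lemma~\ref{stor}, the ellipse-tangency trick) scales to handle this without a new structural idea. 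Your suggestion of a symmetrization or averaging argument is exactly what is missing, and until such an argument is found the conjecture remains open. In short: your plan is sound as a starting point, but the gap between plan and proof is the open problem itself.
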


It should be noted that there are several extremal problems for polygons, with a similar behavior of the solution with respect to the number of sides $n$.
We recall two well-known problems of such kind.

Two classical isodiametric problems for polygons are to determine the maximal area and maximal perimeter of an $n$-gon with unit diameter.
Important results in the study of these two problems were obtained by K.~Reinhardt in \cite{Rein22}.

One of the main his results is the following
\cite[Theorem 1, P. 252]{Rein22}:
{\it Let $n$ be an odd number, $n\geq 3$; then the regular $n$-gon has maximal area among all
simple $n$-gons of diameter $d$.}

For even $n \ge 6$ this is not true.
For $n = 4$ it is true, but the square is not a unique solution:
The diagonals of the square can be slightly shifted relative to each other, leaving them orthogonal.

It should be noted that a $n$-gon of the maximum area among $n$-gons with diameter $d$ is unique if $n$ is an odd number \cite[P. 259]{Rein22}.

The second main result by K.~Reinhardt is the following
\cite[Theorem 2, P. 252]{Rein22}: {\it Let $n$  be an integer that is not a degree of $2$, $n \geq 3$. Then, among the convex $n$-gons with diameter $d$,
the maximum perimeter is achieved if and only if
a $n$-gon is equilateral and inscribed in a Reulaux polygon of diameter $d$ so that the vertices of the
Reulaux polygon are also the vertices of the $n$-gon} (such polygons are called {\it Reinhardt polygons}).

Moreover, this maximum perimeter coincides with the perimeter of the regular $2n$-gon of diameter $d$, as E.~Makai,~J. noticed (a private conversation).
Indeed, if $P_n$ is such a $n$-gon, then the diameter of the polygon $\frac{1}{2}(P_n - P_n)$  (this is the central symmetral of $P_n$,
that have the same perimeter as $P_n$ has) is also equal to $d$, $\frac{1}{2}(P_n - P_n)$ is inscribed
into a circle of diameter $d$, and has $\le 2n$ sides.
So, this perimeter is no more than the perimeter of the regular $2n$-gon incribed into this circle
(that can obtained via the above procedure from  a regular $n$-gon of diameter $d$).

We also note that  for any simple $n \geq 3$, there is only one $n$-gon of the maximum perimeter among $n$-gons with diameter $d$ \cite[P. 263]{Rein22}.

The largest areas of a hexagon and
an octagon, resp., of unit diameter were given by R. Graham \cite{Graham}, and by C. Audet, P. Hansen, F.
Messine, J. Xiong~\cite{AHMX}. The largest perimeter of an octagon of unit diameter was given in C.
Audet, P. Hansen, F. Messine~\cite{AHM07}. The cases of higher even numbers, and
higher powers of $2$ seem to be open.
See also \cite{HaMo2019} for more recent results on the topic.
\smallskip

C.~Audet, P.~Hansen, and F.~Messine  showed that the value $\frac{1}{2n} \cot \left( \frac{\pi}{2n} \right)$ is  an upper bound for the width of any $n$-sided
polygon with unit perimeter.
This bound is reached when $n$ is not a power of $2$,
and the corresponding optimal solutions are the regular polygons when $n$ is odd and clipped regular Reuleaux polygons when $n$ is even but not a power of $2$ \cite{AHM09}.
\smallskip

We hope that these analogies will be useful in further research and will help to find an approach to the conjectures formulated above.
\medskip

For the convenience of readers,
here we provide {\bf the list of some important notations and definitions used in the paper}
(for each object, the page of the paper on which it is introduced is indicated).

\begin{itemize}

\item
For a given set $M$ in a given metric space $(X,d)$, the following concepts are defined on the Page \pageref{defchebrad}: {\it the Chebyshev radius, a Chebyshev center,
the relative Chebyshev radius} (with respect to a given non-empty subset in $X$), {\it the self Chebyshev radius}.

\item
Quadrilaterals called {\it magic kites} are defined on Page \pageref{defmagkite}.

\item
For a given compact set $\Gamma \subset \mathbb{R}^2$ (in particular, a convex curve), the function
$\mu: \Gamma \rightarrow \mathbb{R}$ is defined by  \eqref{mufunk} on Page \pageref{mufunk}.

\item
For a given point $A \in \Gamma$, where $\Gamma$ is  the boundary of a fixed $n$-gon $P$,
the notations $T_+(A)$, $T_-(A)$, and $N(A)$ are introduced on Page \pageref{eq.fath1},
and $\mathcal{F} (A)$ is defined by \eqref{eq.fath1} on the same page.

\item
$G_{\operatorname{locmin}}$, the set of local minimum points of the function $\mu$, is introduced on Page~\pageref{deflocmin}.

\item
The definition of {\it extremal} $n$-gon $P$ is given on Page \pageref{defextrem}, see also Remark \ref{rem4} on Page \pageref{rem4}.

\item
For the boundary $\Gamma$ of a fixed $n$-gon $P$,  the function $t: \mathbb{R}^2 \rightarrow \mathbb{R}$ is define by~\eqref{eq_fuct1} on Page \pageref{eq_fuct1}.
On the same page, the sets
$U_r$ and $L_r$ are defined for any $r>0$, and $G_{\min}$, the set of self Chebyshev centers for $\Gamma$, is introduced by \eqref{eq_fuct3}.

\item
On Page \pageref{mumchebcenters}, the notation $\|G_{\min}\|$ is introduced for the number of points in the set $G_{\min}$, or, in other words, the number
of self Chebyshev centers for $\Gamma$.

\item
{\it Simple}  and {\it non-simple}  self Chebyshev centers for $\Gamma$ are defined on Page \pageref{simple_nonsimple}.

\item
A special class of quadrilaterals $\mathcal{P}_{\operatorname{spec}}$ is introduced on Page \pageref{specquadr}.
\end{itemize}

\vspace{15mm}

\end{document}